\newtheorem{lemma}{Lemma}[section]
\newtheorem{proposition}{Proposition}[section]
\theoremstyle{definition}
\newtheorem{remark}{Remark}[section]
\newtheorem*{assumption*}{Assumption}
\newtheorem{example}{Example}[section]
\DeclareMathOperator{\clr}{clr}
\DeclareMathOperator{\ran}{ran}
\DeclareMathOperator{\cl}{cl}
\DeclareMathOperator{\id}{id}
\DeclareMathOperator{\spn}{span}
\newcommand{\zero}{\mathbf{0}}
\numberwithin{equation}{section}
\numberwithin{table}{section}
\renewcommand\footnotemark{}
\begin{document}
	
	\title{Cointegrated Density-Valued Linear Processes}
	\author{Won-Ki Seo}
	
	\thanks{\noindent I thank Brendan K.\ Beare for a lot of discussion at all stages of this paper. I am also grateful to Dakyung Seong and Lam Nguyen for useful discussion on section \ref{empirical} of this paper.}
	\affil{Department of Economics, University of California, San Diego}
	
	\maketitle
%	\begin{center}
%		\textbf{Preliminary manuscript. Comments welcome.}
%	\end{center}
	\bigskip
	
	\begin{abstract}
		In data rich environments we may sometimes deal with time series that are probability density-function valued, such as observations of cross-sectional income distributions over time. To apply the methods of functional time series analysis to such observations, we should first embed them in a linear space in which the essential properties of densities are preserved under addition and scalar multiplication. Bayes Hilbert spaces provide one way to achieve this embedding. In this paper we investigate the use of Bayes Hilbert spaces to model cointegrated density-valued linear processes. We develop an I(1) representation theory for cointegrated linear processes in a Bayes Hilbert space, and adapt existing statistical procedures for estimating the corresponding attractor space to a Bayes Hilbert space setting. We revisit empirical applications involving earnings and wage densities to illustrate the utility of our approach.
	\end{abstract}
	
	\pagebreak
	
	\section{Introduction}\label{intro}
	While the subject of time series analysis has traditionally dealt with time series taking values in finite dimensional Euclidean space, a recent literature on functional time series analysis deals with time series taking values in an infinite dimensional Banach or Hilbert space. Each observation of such a time series is a functional object; for example, it could be a square-integrable function, continuous function, or probability density function. \cite{Bosq2000} gives a rigorous theoretical treatment of linear processes in Banach and Hilbert spaces. \cite{HK2012} discuss statistical aspects of functional data and time series analysis, and provide various empirical applications. 
	
	\cite{granger1981} introduced the notion of cointegration as a way to model long-run equilibrium relationships between real-valued time series. A recent paper by \cite{Chang2016152} is the first to consider the possibility of cointegration in a functional time series setting. The authors consider a time series of probability densities taking values in the space of square-integrable real functions on a compact interval $K$, denoted by $L^2(K)$, and provide a notion of cointegration adapted to this space. They develop associated statistical methods based on functional principal component analysis (FPCA), and provide empirical applications to time series of earnings and stock return densities.
	
	\cite{Beare2017} notes that a technical complication arises in the framework developed by \cite{Chang2016152}: the nonnegativity property of probability densities is incompatible with the type of nonstationarity exhibited by integrated time series, and this incompatibility cannot be resolved by a simple demeaning of densities. Nontrivial examples of cointegrated density-valued processes in $L^2(K)$ therefore do not exist. Even in models of stationary probability density-valued time series, \cite{petersen2016} have observed that it is generally inadvisable to treat such time series as taking values in the subset of $L^2(K)$ consisting of probability densities, as this subset does not form a linear subspace of $L^2(K)$. It is clear that an arbitrary linear combination of densities is not a proper density; only a convex combination of densities is a proper one. 
	
	The primary purpose of this paper is to show that the technical complications just discussed can be resolved by viewing probability density-valued observations as elements of a Bayes Hilbert space. Such spaces were introduced by \cite{Egozcue2006} and developed further by \cite{Boogaart2014}. They are constructed in such a way that different elements of the space correspond to different probability densities, and the essential properties of probability densities are preserved under addition and scalar multiplication. The notion of cointegration may easily be adapted to this space as in \cite{BSS2017}, who extend the framework developed by \cite{Chang2016152} to arbitrary complex separable Hilbert spaces.
	
	A secondary purpose of the paper is to study the behavior of density-valued autoregressive processes of order $p$  (AR($p$) processes) taking values in a Bayes Hilbert space. We provide conditions under which an AR($p$) law of motion has a stationary solution or I($d$) solution for some positive integer $d$, and a necessary and sufficient condition for such an I($d$) solution to in fact be I(1). These results are closely related to the so-called Granger-Johansen representation theorem and its generalization to a possibly infinite dimensional Hilbert space setting by \cite{BSS2017} and \cite{BS2017}.  
	
	A third contribution of the paper is the provision of statistical methods to estimate the attractor space (to be defined later) for an I(1) process in a Bayes Hilbert space. These methods are based on the functional unit root test proposed by \cite{Chang2016152}. We illustrate their usefulness with empirical applications to time series of cross-sectional densities of individual earnings and wages.  
	
	The remainder of the paper is organized as follows. In Section \ref{prelim}, we review some background material on Bayes Hilbert spaces and other essential mathematical concepts. In Section \ref{coden}, we explain how Bayes Hilbert spaces can provide a useful setting for models of cointegrated density-valued time series. Results on I($d$) representations of cointegrated AR($p$) processes in Bayes Hilbert space are provided in Section \ref{arden}. Statistical tools for studying cointegrated density-valued time series are introduced in Section \ref{stat}, and illustrated with empirical applications in Section \ref{empirical}.

	\section{Preliminaries}\label{prelim}
	Here we briefly review essential background for the study of cointegrated density-valued linear processes, and fix standard notation and terminology. 
	
	\subsection{Bayes Hilbert spaces}\label{bayessp}
	Bayes Hilbert spaces provide the setting for our treatment of cointegrated density-valued linear processes. The discussion provided here will omit some details to conserve space. The reader is referred to \cite{Egozcue2006} and  \cite{Boogaart2014} for a rigorous introduction to the subject. 
	
	Let $(M, \mathcal A)$ be a measurable space and $\mathcal P$ be the set of $\sigma$-finite positive real-valued measures defined on it. For any measure $\lambda$, called a reference measure, define 
	\begin{align*}
	\mathcal M(\lambda) \coloneqq \{ \nu \in \mathcal P :  \nu(A) = 0 \, \Leftrightarrow \, \lambda(A)=0,  \quad \forall A\in \mathcal A\}.
	\end{align*} 
	That is, $\mathcal M(\lambda)$ is the collection of $\sigma$-finite positive measures absolutely continuous with respect to $\lambda$. Due to the Radon-Nikodym theorem, any element $\nu$ in $\mathcal M(\lambda)$ may be identified with the $\lambda$-density $\mathrm{d}\nu/ \mathrm{d}\lambda$, so hereafter we always regard an element in $\mathcal M(\lambda)$ as its $\lambda$-density.   
	
	We define an equivalence relation $\simeq$ on $\mathcal M(\lambda)$ as follows. For $\lambda$-densities $f, g \in \mathcal M(\lambda)$, the equivalence $f\simeq g$ holds if there is some $a>0$ such that
	\begin{align*}
	\int_{A} f\mathrm{d}\lambda &= a\cdot\int_{A} g\mathrm{d}\lambda \quad\text{for all }A \in \mathcal A.
	\end{align*}
	The above says that if two measures $\nu_1$ and $\nu_2$ are proportional in the sense that $\nu_1(A) = a \cdot \nu_2 (A)$ for all $A \in \mathcal A$, their $\lambda$-densities are equivalent under $\simeq$. The collection of  $\simeq$-equivalence classes of $\lambda$-densities, denoted by $B(\lambda)$, are the elements of a Bayes space associated with $\lambda$. We define two vector operations $(\oplus, \odot)$, called perturbation and powering, as follows.  For $a \in \mathbb{R}$ and $f,g \in B(\lambda)$, define
	\begin{equation*}
	f \oplus g \simeq \int fg\mathrm{d}\lambda,
	\end{equation*} 
	\begin{equation*}
	a \odot f \simeq \int f^{\,a}\mathrm{d}\lambda.
	\end{equation*} 
	Negative perturbation $\ominus$ is defined as $f \ominus g = f \oplus (-1 \odot g) $.  $B(\lambda)$ equipped with the two vector operations $(\oplus, \odot)$ is a vector space. 
	
	We hereafter assume that $\lambda$ is a finite measure and let   $B^2(\lambda)$ be the collection of $\lambda$-densities that have square-integrable logarithms, i.e.
	\begin{align*}
	B^2(\lambda) \coloneqq \left\{ f \in B(\lambda) \,:\, \int \left| \log f\right|^2\mathrm{d}\lambda < \infty    \right\}.
	\end{align*} 
	It turns out that $B^2(\lambda)$ is a vector subspace of $B(\lambda)$. Define the inner product $\langle \cdot, \cdot \rangle_{B^2(\lambda)}$ in the following way: for $f,g \in B^2(\lambda)$,
	\begin{align*}
	\langle f , g \rangle_{B^2(\lambda)} = \int \left(\log f - \int \log f\mathrm{d}\lambda \right) \left(\log g -  \int \log f \mathrm{d}\lambda \right)\mathrm{d}\lambda.
	\end{align*}
	The following result is obtained in \cite{Boogaart2014}. 
	\begin{lemma}\label{bsplem1}
		$B^2(\lambda)$ equipped with $(\oplus,\odot)$  and $\langle \cdot , \cdot \rangle_{B^2(\lambda)} $ is a separable Hilbert space. 
	\end{lemma}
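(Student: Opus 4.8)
The plan is to construct an explicit isometric isomorphism between $B^2(\lambda)$ and a closed subspace of the ordinary Lebesgue space $L^2(\lambda)$, and then transport the Hilbert space structure back along this map. The natural candidate is the centered log-ratio transformation $\clr(f) = \log f - \lambda(M)^{-1}\int \log f\,\mathrm{d}\lambda$, which sends $f \in B^2(\lambda)$ into $L^2(\lambda)$; since $\int \clr(f)\,\mathrm{d}\lambda = 0$ by construction, its image in fact lies in the subspace $L^2_0(\lambda) \coloneqq \{h \in L^2(\lambda) : \int h\,\mathrm{d}\lambda = 0\}$. By inspection the inner product $\langle\cdot,\cdot\rangle_{B^2(\lambda)}$ is precisely the pullback $\langle \clr(f),\clr(g)\rangle_{L^2(\lambda)}$ of the usual $L^2(\lambda)$ inner product, so $\clr$ is automatically an isometry; the work is to show it is a linear bijection onto $L^2_0(\lambda)$.

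First I would verify that $\clr$ is well defined on $\simeq$-equivalence classes and linear for $(\oplus,\odot)$. If $f \simeq g$ then the two densities are proportional, so $\log f - \log g$ equals a $\lambda$-a.e.\ constant, which the centering annihilates; hence $\clr$ does not depend on the chosen representative. For linearity, the defining relations $f\oplus g \simeq fg$ and $a\odot f \simeq f^{\,a}$ together with $\log(fg) = \log f + \log g$ and $\log(f^{\,a}) = a\log f$ yield $\clr(f\oplus g) = \clr(f)+\clr(g)$ and $\clr(a\odot f) = a\,\clr(f)$ once the additive centering constants are combined. These verifications are routine.

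The substantive step is surjectivity onto $L^2_0(\lambda)$. Given any $h \in L^2_0(\lambda)$, I would set $f = \exp(h)$ and argue that this density represents a genuine element of $\mathcal M(\lambda)$, and hence of $B^2(\lambda)$, with $\clr(f)=h$. Because $h$ is real-valued $\lambda$-a.e., the density $\exp(h)$ is strictly positive, so the measure $\nu(A) = \int_A \exp(h)\,\mathrm{d}\lambda$ has exactly the same null sets as $\lambda$. Writing $M = \bigcup_{n} \{h \le n\}$ and estimating $\nu(\{h\le n\}) \le e^{n}\lambda(M) < \infty$ shows that $\nu$ is $\sigma$-finite; this is precisely where the standing assumption that $\lambda$ is finite is used, since $\exp(h)$ need not itself be $\lambda$-integrable. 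As $\log f = h \in L^2(\lambda)$ we obtain $f \in B^2(\lambda)$, and $\clr(f) = h - \lambda(M)^{-1}\int h\,\mathrm{d}\lambda = h$ because $h \in L^2_0(\lambda)$. Injectivity is immediate from the isometry, so $\clr$ is a bijective linear isometry from $B^2(\lambda)$ onto $L^2_0(\lambda)$.

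It remains to certify that the target space is a separable Hilbert space and to conclude. The functional $h \mapsto \int h\,\mathrm{d}\lambda$ is continuous on $L^2(\lambda)$, so $L^2_0(\lambda)$, being its kernel, is a closed and therefore complete subspace of $L^2(\lambda)$; as a subspace of the separable space $L^2(\lambda)$ it is itself separable. Pulling back completeness, the inner product axioms, and a countable dense set through the isometric isomorphism $\clr$ then establishes that $B^2(\lambda)$ with $(\oplus,\odot)$ and $\langle\cdot,\cdot\rangle_{B^2(\lambda)}$ is a separable Hilbert space. The main obstacle is the surjectivity argument, and in particular the claim that $\exp(h)$ defines a bona fide $\sigma$-finite measure lying in $\mathcal M(\lambda)$; the linearity, isometry, and topological transfer steps are straightforward by comparison.
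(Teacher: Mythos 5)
Your argument is correct, but note that the paper does not actually prove this lemma: it is stated as a result imported from van den Boogaart et al.\ (2014), and the route you take --- exhibiting $\clr$ as a linear bijective isometry from $B^2(\lambda)$ onto the closed subspace $\{h\in L^2(\lambda):\int h\,\mathrm{d}\lambda=0\}$ and transporting completeness and separability back --- is precisely the standard proof in that reference; in effect you prove the paper's Lemma \ref{bsplem2} first and deduce Lemma \ref{bsplem1} from it, whereas the paper records the two facts in the opposite order and cites both. Two small remarks. First, your centering constant $\lambda(M)^{-1}\int\log f\,\mathrm{d}\lambda$ is the correct general normalization; the paper's unnormalized $\int\log f\,\mathrm{d}\lambda$ tacitly assumes $\lambda(M)=1$, and without that assumption the paper's formula would not land in the mean-zero subspace. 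Second, ``injectivity is immediate from the isometry'' is slightly circular: positive definiteness of $\langle\cdot,\cdot\rangle_{B^2(\lambda)}$ is one of the inner-product axioms you are trying to establish by pullback, and the pullback of an inner product along a linear map is positive definite only if that map is injective. The fix is one line and should be stated directly: $\clr(f)=0$ forces $\log f$ to equal a constant $\lambda$-a.e., so $f$ is a.e.\ a positive constant and hence lies in the $\simeq$-class of $\mathbbm{1}_M$, the neutral element of $\oplus$. With that repair, and with the $\sigma$-finiteness estimate $\nu(\{h\le n\})\le e^n\lambda(M)<\infty$ correctly exploiting the finiteness of $\lambda$ in the surjectivity step, the proof is complete.
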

	
	Define $L^2(\lambda)$ as the set of measurable square-integrable functions $f : \Omega \to \mathbb{R}$. An element of $L^2(\lambda)$ is regarded as a class of square-integrable functions that are $\lambda$-almost surely equivalent. $L^2(\lambda)$ is assumed to be equipped with the inner product $\langle \cdot , \cdot \rangle_{L^2(\lambda)}$ defined as follows: $\langle \tilde{f},\tilde{g} \rangle_{L^2(\lambda)} = \int \tilde{f} \tilde{g}\mathrm{d}\lambda$ for $\tilde{f}, \tilde{g} \in L^2(\lambda)$. Define
	\begin{align*}
	\overline{L^2} (\lambda) = \left\{ \tilde{f} \in L^2(\lambda):\int \tilde{f} = 0   \right\},
	\end{align*}
	which is a closed subspace of $L^2(\lambda)$. One useful fact for the study of density-valued functional objects is that there exists an isometric isomorphism between $B^2(\lambda)$ and $\overline{L^2}(\lambda)$.  
	
	\begin{lemma}\label{bsplem2}
		The following hold:
		\begin{enumerate}
			\item [\upshape{(a)}] $B^2(\lambda)$ is isometrically isomorphic to $\overline{L^2} (\lambda)$.
			\item [\upshape{(b)}] $\clr : B^2(\lambda) \to \overline{L^2} (\lambda)$ defined as 
			\begin{align*}
			\clr (f)  = \log f - \int \log f\mathrm{d}\lambda, \quad f \in B^2(\lambda),
			\end{align*}
			is an isometrical isomorphism. Moreover, $\clr^{-1}$ is given by
			\begin{align*}
			\clr^{-1} (\tilde{f}) \simeq \exp(\tilde{f}), \quad \tilde{f} \in \overline{L^2} (\lambda).
			\end{align*} 
		\end{enumerate}
	\end{lemma}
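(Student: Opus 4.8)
The plan is to prove part~(b) directly and read off part~(a) as an immediate consequence, since exhibiting a single isometric isomorphism between two spaces is exactly what it means for them to be isometrically isomorphic. Thus everything reduces to showing that $\clr$ is a well-defined linear isometry of $B^2(\lambda)$ onto $\overline{L^2}(\lambda)$ whose inverse is $\tilde f \mapsto \exp(\tilde f)$ (up to $\simeq$).

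First I would confirm that $\clr$ is well-defined on $\simeq$-classes and takes values in $\overline{L^2}(\lambda)$. If $f \simeq g$, then $f = a\,g$ $\lambda$-a.e.\ for some $a>0$, so $\log f = \log a + \log g$; centering by the $\lambda$-mean annihilates the additive constant (here one uses $\lambda(M)=1$, so that $\int \log a\,\mathrm{d}\lambda = \log a$), giving $\clr(f) = \clr(g)$. Membership $\clr(f) \in L^2(\lambda)$ holds because $\log f \in L^2(\lambda)$ by definition of $B^2(\lambda)$, and since $\lambda$ is finite the centering constant $\int \log f\,\mathrm{d}\lambda$ is finite by Cauchy--Schwarz; a one-line computation then gives $\int \clr(f)\,\mathrm{d}\lambda = 0$, placing $\clr(f)$ in $\overline{L^2}(\lambda)$.

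Next I would check that $\clr$ turns the Bayes operations into the ordinary linear structure and is norm-preserving. Because $a\odot f \oplus b \odot g \simeq f^{a} g^{b}$ and $\log(f^{a}g^{b}) = a\log f + b\log g$, linearity of the logarithm and of the integral give $\clr(a\odot f \oplus b\odot g) = a\,\clr(f) + b\,\clr(g)$, so $\clr$ is linear. The isometry is then essentially built into the definition: $\langle \clr(f),\clr(g)\rangle_{L^2(\lambda)} = \int \clr(f)\,\clr(g)\,\mathrm{d}\lambda$ coincides with $\langle f,g\rangle_{B^2(\lambda)}$, whence $\clr$ preserves norms and is in particular injective.

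It remains to establish surjectivity, and this is the one step I expect to require genuine care. Given $\tilde f \in \overline{L^2}(\lambda)$, I would set $f \simeq \exp(\tilde f)$ and verify both $f \in B^2(\lambda)$ and $\clr(f) = \tilde f$. The latter is immediate, since $\clr(\exp \tilde f) = \tilde f - \int \tilde f\,\mathrm{d}\lambda = \tilde f$ using $\int \tilde f\,\mathrm{d}\lambda = 0$, and $\log f = \tilde f \in L^2(\lambda)$ supplies the square-integrability defining $B^2(\lambda)$. The delicate point is that $\exp(\tilde f)$ must actually represent an element of $\mathcal M(\lambda)$: it need not be $\lambda$-integrable, but it is positive and measurable, and slicing $M$ along the level sets $\{\,n-1 \le \tilde f < n\,\}$, each of finite $\lambda$-measure and on which $\exp(\tilde f)$ is bounded, shows that the measure $A \mapsto \int_A \exp(\tilde f)\,\mathrm{d}\lambda$ is $\sigma$-finite and mutually absolutely continuous with $\lambda$, hence a bona fide element of $\mathcal M(\lambda)$ and thus of $B^2(\lambda)$. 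With surjectivity secured, $\clr$ is a linear isometric bijection, proving (b) and hence (a).
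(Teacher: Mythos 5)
Your proof is correct, but it is worth noting that the paper does not actually prove this lemma at all: it is imported wholesale from van den Boogaart, Egozcue and Pawlowsky-Glahn (2014) with a pointer to that reference. So your contribution is a self-contained elementary verification of a cited fact rather than an alternative to an in-paper argument. The structure you chose is the natural one, and you correctly isolate the only genuinely delicate step, namely surjectivity: the level-set slicing $\{\,n-1\le \tilde f< n\,\}$, $n\in\mathbb{Z}$, showing that $A\mapsto\int_A\exp(\tilde f)\,\mathrm{d}\lambda$ is a $\sigma$-finite measure mutually absolutely continuous with $\lambda$, is exactly what is needed for $\exp(\tilde f)$ to represent an element of $\mathcal M(\lambda)$ and hence of $B^2(\lambda)$; this is the point Example \ref{bspex1} shows can fail for non-finite reference measures, and your argument makes clear that finiteness of $\lambda$ is what saves it. Two small remarks. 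First, your observation that well-definedness of $\clr$ on $\simeq$-classes and the identity $\int\clr(f)\,\mathrm{d}\lambda=0$ both require $\lambda(M)=1$ is accurate; the paper writes $\int\log f\,\mathrm{d}\lambda$ rather than $\lambda(M)^{-1}\int\log f\,\mathrm{d}\lambda$, so it is implicitly normalizing the reference measure to be a probability measure, and your proof correctly tracks this. Second, your deduction of injectivity from the isometry tacitly uses that $\Vert\cdot\Vert_{B^2(\lambda)}$ is a genuine norm (so that $\clr(h)=0$ a.e.\ forces $h\simeq\mathbbm{1}$), which is supplied by Lemma \ref{bsplem1}; a one-line mention would close that loop, but it is not a gap.
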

	\noindent See \cite{Boogaart2014} for more details. The map $\clr$ is called the centered log-ratio ($\clr$) transformation. Since $\clr$ is a unitary linear map, the following are naturally implied:
	\begin{align*}
	&\clr(f \oplus g) = \clr(f) + \clr(g), \quad \clr(a \odot f) = a \cdot \clr (f), \\
	& \langle f,g \rangle_{B^2(\lambda)} = \langle \clr(f),\clr(g) \rangle_{L^2(\lambda)} = \int \clr(f) \clr(g)\mathrm{d}\lambda.
	\end{align*}

	Let $\lambda$ be the uniform measure defined on a compact interval $K$. The Bayes Hilbert space associated with $\lambda$, $B^2(\lambda)$,  was considered in \cite{Egozcue2006}, and it may be understood as the space of probability densities with support $K$. Note that $B^2(\lambda)$ is the space of equivalence classes of positive functions defined on $K$ with square-integrable logarithms with respect to the uniform measure $\lambda$. Within an equivalence class, the integral constraint $\int_K f \mathrm{d} \lambda = 1$ singles out the representative element as a probability density function. Then for $f,g \in B^2(\lambda)$, two operations $\oplus$ and $\odot$ are be defined as
	\[f \oplus g (x) = \frac{f(x)g(x)}{\int_K f(y)g(y)\mathrm{d}y},\quad a \odot f (x) =  \frac{f(x)^a}{\int_K f(y)^a \mathrm{d}y}. \]
	If we denote the indicator function on $K$ by $\mathbbm{1}_K$, then the $\simeq$-equivalence class associated with $\mathbbm{1}_K$ is the neutral element of perturbation. Naturally, $1 \in \mathbb{R}$ is the neutral element of powering.  Perturbation and powering are constructed such that the results of $f \oplus g$ and $a \odot f$ are always densities even if one or both of $f$ and $g$ are not proper densities. If one regards an element in $B^2(\lambda)$ as a probability density, perturbation can be interpreted as a Bayes updating. This is why $B^2(\lambda)$ is called a Bayes (Hilbert) space in the previous literature. 
	
	The fact that the resulting density from any arbitrary linear combination of two elements of $B^2(\lambda)$ is a proper density is crucial to the approach taken in this paper. In fact, the point of \cite{Beare2017}'s comments on \cite{Chang2016152} is rooted in the fact that linear combinations of probability densities in $L^2(K)$ are not necessarily probability densities. It will be shown in the subsequent sections that the Bayes Hilbert space is not just a good candidate on which to define (cointegrated) density-valued processes, but also its Hilbert space structure makes it possible to use well-developed statistical techniques in Hilbert spaces without further technicality.   
	
	We will consider a time series of densities with common support $K$. Therefore, $\lambda$ will be assumed to be the uniform measure on $K$ and density-valued linear processes will be defined in the associated Bayes Hilbert space $B^2(\lambda)$. Allowing the reference measure to be another measure with compact support is trivial. However, generalization to a reference measure with unbounded support is nontrivial and we will not consider this case. When the reference measure has unbounded support, the space of probability measures as a subset of $B^2(\lambda)$ is not a subspace, as the following example demonstrates.
	
	\begin{example} \label{bspex1}
		Let $\lambda$ be the standard normal measure defined on the real line and $B^2(\lambda)$ be the associated Bayes Hilbert space.  Let $\nu$ be the normal measure with mean $0$ and variance $2$. Then $f \coloneqq \mathrm{d}\nu/\mathrm{d}\lambda \simeq \exp(x^2/4 )$, so
		$	\int |\log f|^2 \mathrm{d} \lambda < \infty$ since the fourth moment of the standard normal measure is finite. However, note that $f$ itself represents an infinite measure; i.e.\ it is not integrable. So in this case, an element of $B^2(\lambda)$ need not be a proper density.% The space of integrable functions, which may be normalized to probability density functions, in $B^2(\lambda)$ is convex but not linear. 
		%Therefore, $f \oplus (a \odot g)$ could be an infinite measure for $a \in \mathbb{R}$.   
	\end{example}
	
	\cite{Aitchison1982} introduced the space $B^2(\lambda)$ equipped with $(\oplus, \odot)$ and $\langle \cdot , \cdot \rangle_{B^2(\lambda)} $ in the special case where $\lambda$ is the discrete uniform measure supported on a finite set $K$. The term Aitchison geometry is used to connote the geometric properties of this Bayes Hilbert space. The term generalized Aitchison geometry may be preferred when referring to Bayes Hilbert spaces constructed with general reference measures.
	
	\subsection{Bounded linear operators on Bayes Hilbert space}
	Here we briefly summarize essential concepts on bounded linear operators. The reader is referred to \cite{Bosq2000} and \cite{Conway1994} for a detailed introduction.
	
	Let $\mathcal H$ be $B^2(\lambda)$ or its complexification $B^2_{\mathbb{C}}(\lambda)$, defined as
	\begin{align*}
	B^2_{\mathbb{C}}(\lambda) = \{ f \oplus i g : f,g \in B^2(\lambda) \},
	\end{align*}
	where $i$ denotes the imaginary unit. $B^2_{\mathbb{C}}(\lambda)$ is understood to be equipped with the inner product $\langle f \oplus i g, f' \oplus i g' \rangle_{B^2_{\mathbb{C}}(\lambda) } = \langle f, f' \rangle_{B^2(\lambda) } - i \langle f, g' \rangle_{B^2(\lambda) }  + i \langle g, f' \rangle_{B^2(\lambda) } + \langle g, g' \rangle_{B^2(\lambda) }$.  
	
	Let $\mathfrak B(\mathcal H)$ be the space of bounded linear operators, equipped with the usual operator norm 
	\[ \| A \|_{\mathfrak B(\mathcal H)} = \sup_{\|f\|_{\mathcal H} \, \leq  \, 1} \| A f  \|_{\mathcal H}, \quad A \in \mathfrak B(\mathcal H).\]
	
	$A \in \mathfrak B(\mathcal H)$ is said to be compact if for two orthonormal bases $(u_j, j \in \mathbb{N})$ and $(v_j, j \in \mathbb{N})$ of $\mathcal H$, it can be written as
	\[A f =\bigoplus_{j=1}^{\infty} \gamma_j \langle f,u_j\rangle_{\mathcal H}  \odot v_j,\]
	for some sequence $(\gamma_j, j\in \mathbb{N})$ tending to zero. 
	
	Given $A \in  \mathfrak B(\mathcal H)$, we define two fundamental subspaces, the range and kernel of $A$, as follows.
	\begin{align*}
	&\ker A \coloneqq \{ f \in \mathcal H : A f =\lambda\}, \\
	&\ran A \coloneqq \{ A f : f \in \mathcal H \}.
	\end{align*} 
	The dimension of $\ker A$ is called the nullity of $A$, and the dimension of $\ran A$ is called the rank of $A$.
	
	For $A \in \mathfrak B(\mathcal H)$, there exists an operator $A^* \in \mathfrak B(\mathcal H)$, called the adjoint of $A$,  that is uniquely determined by the following property
	\begin{align*}
	\langle A f, g \rangle_{\mathcal H} = \langle f , A^* g\rangle_{\mathcal H}, \quad \text{ for all $f,g \in \mathcal H$}.
	\end{align*} 
	
	Given a subset $V \subset \mathcal H$, the orthogonal complement of $V$ is denoted by
	\begin{align*}
	V^\perp = \{g \in \mathcal H : \langle f,g \rangle_{\mathcal H} = 0 \text{ for all $f \in V$}\}.
	\end{align*}
	The closure of $V$, defined as the union of $V$ and its limit points, is denoted by $\cl(V)$. It turns out that the following relationship holds \citep[see e.g.][pp.\ 35-36]{Conway1994}
	\begin{align} \label{ranknullity}
	\ker A = (\ran A^*)^\perp \quad \text{and} \quad (\ker A)^\perp = \cl (\ran A^*).
	\end{align}
	
	$A \in  \mathfrak B(\mathcal H)$ is said to be positive semidefinite if $\langle A f, f \rangle_{\mathcal H} \geq 0$ for all $f \in \mathcal H$. If the inequality is strict for all $f\neq\lambda$, $A$ is said to be positive definite. 
	
	Given a subspace $V \in \mathcal H$, $A{\mid_V}$ denotes the restriction of an operator $A \in B(\mathcal H)$, i.e. $A{\mid_V} : V \to \mathcal H$.  
	
	Let $\mathcal K=\overline{L^2}(\lambda)$ if $\mathcal H = B^2(\lambda)$, and let $\mathcal K=\overline{L^2_{\mathbb{C}}}(\lambda)$ if $\mathcal H = B_\mathbb{C}^2(\lambda)$, where $\overline{L^2_{\mathbb{C}}}(\lambda)$ is the complexification of $\overline{L^2}(\lambda)$ similarly defined as $B_\mathbb{C}^2(\lambda)$. Let $\clr_\mathbb{C}$ denote the complexification of $\clr$, i.e. $\clr_\mathbb{C} (f\oplus i g) = \clr (f) + i \cdot \clr(g)$. Then $\clr_\mathbb{C}$ is an isometric isomorphism between $B_\mathbb{C}^2(\lambda)$ and $\overline{L^2_{\mathbb{C}}}(\lambda)$. Since $\clr$ or $\clr_\mathbb{C}$ is an isometric isomorphism between $\mathcal H$ and $\mathcal K$, any bounded linear operator in $\mathfrak B (\mathcal H)$ can be understood as the corresponding element in $\mathfrak B(\mathcal K)$. This property will be used repeatedly in subsequent sections.
	
	%; in particularly in Section \ref{stat}, it will be seen that this makes it possible to implement a statistical procedure to estimate the attractor space without introducing a new technique specially fit for Bayes Hilbert spaces.   
	
	\subsection{Random elements of Bayes Hilbert space}
	Here we summarize some essential concepts relating to random elements of a Bayes Hilbert space. See \cite{Bosq2000} for a detailed discussion of random elements of Banach and Hilbert spaces.
	
	Let $(\Omega, \mathcal{F}, P)$ be the underlying probability triple and let $\mathcal H = {B^2(\lambda)}$ or $B_{\mathbb{C}}^2(\lambda)$, which is understood to be equipped with its Borel $\sigma$-field. An $\mathcal H$-valued random variable is a measurable map $X:\Omega \to \mathcal H$. Let $\Vert \cdot \Vert_{\mathcal H}$ be the norm defined on $\mathcal H$. We say that $X$ is integrable if $E \Vert X\Vert_\mathcal H < \infty$. If $X$ is integrable, there exists a unique element, denoted by $E X$, such that 
	\begin{align*}
	E \langle X , f\rangle_{\mathcal H} =  \langle E X , f\rangle_{\mathcal H}, \quad \text{ for all $f \in \mathcal H$}. 
	\end{align*}  
	$E X$ is called the expectation of $X$.
	
	If  $E \Vert X\Vert_{\mathcal H}^2 < \infty$ then $X$ is said to be square-integrable. Let $\mathfrak L^2_{\mathcal H}$ denote the space of square-integrable random variables $X$ with $E X = \lambda$, equipped with the norm 
	\begin{align*}
	\Vert X \Vert_{\mathfrak L^2_{\mathcal H}} = \left(E \Vert X\Vert^2_{\mathcal H} \right)^{1/2}.
	\end{align*}
	It turns out that $\mathfrak L^2_{\mathcal H}$ is a Banach space. For $X \in \mathfrak L^2_{\mathcal H}$, the Cauchy-Schwarz inequality implies that $X \langle f,X\rangle_{\mathcal H}$ is integrable for all $f \in \mathcal H$. Define the operator $C_X \in \mathcal B(\mathcal H)$ by 
	\begin{align*}
	C_X (f) \coloneqq E\left(X \langle f, X\rangle_{\mathcal H}\right).
	\end{align*} 
	$C_X$ is called the covariance operator of $X$.   
	%First, for a subset $V \subset \mathcal H$, the orthogonal complement of $V$ is denoted by
	%\begin{align}
	%V^\perp = \{f \in \mathcal H : \langle f,g \rangle = 0 \text{ for all $f \in V$}\}
	%\end{align} 
	%The closure of $V$ defined as the union of $V$ and its limit points is denoted by $cl(V)$.
	\subsection{Operator pencils}
	Let $\mathcal H$ be a complex separable Hilbert space and $U$ be an open connected set in the complex plane $\mathbb{C}$. An operator-valued map $Q : U \rightarrow \mathfrak B(\mathcal H)$ is called an operator pencil. In this paper, $\mathcal H = B_\mathbb{C}^2(\lambda)\,$ or $ \overline{L^2_{\mathbb{C}}}(\lambda) $ is considered. 
	
	For notational convenience, we put $\mathcal H = \overline{L^2_{\mathbb{C}}}(\lambda)$. An operator pencil $Q(z)$ is holomorphic on an open connected set $D$ if and only if 
	\begin{equation*}
	Q^{(1)}(z_0)\coloneqq \lim_{z\rightarrow z_0} \frac{Q(z)-Q(z_0)}{z-z_0}
	\end{equation*}
	exists in the norm of $\mathfrak B(\mathcal H)$ for each $z_0 \in D$. A well known fact is that when $Q(z)$ is holomorphic at $z=z_0$, it allows the power series expansion
	\begin{align*}
	Q(z) = \sum_{k=0}^\infty Q_k (z-z_0)^k
	\end{align*}
	for some sequence $(Q_k, k \in \mathbb{N})$ in $\mathfrak B(\mathcal H)$.

	The set $\sigma(Q) \coloneqq \{z \in U : Q(z) \text{ is not invertible}\}$ is called the spectrum of $Q$, which turns out to be closed \citep[p.\ 56]{Markus2012}. 
	
	\section{Cointegrated density-valued linear processes}\label{coden}
	Hereafter the following is always assumed without explicitly stating it.
	\begin{assumption*}[R]
		$\lambda$ is the uniform measure on a compact interval $K \subset \mathbb{R}$.
	\end{assumption*}
	\noindent Under the above assumption, $B^2(\lambda)$ is the Bayes Hilbert space studied in \cite{Egozcue2006}. The assumption is not essential but convenient. In fact, extending the subsequent results of this paper to other measures supported on $K$, such as the truncated normal measure, can be easily done with trivial modifications.
	
	First we define linear processes in $B^2(\lambda)$, called Bayes linear processes. Due to the Hilbert space structure, linear processes in $B^2(\lambda)$ may be easily defined and studied according to the work of  \cite{Bosq2000,BOSQ2007879}. As seen in Section \ref{prelim}, a Bayes linear process may be understood as a linear process of probability densities with compact support $K$.  
	
	Then, we introduce I(1) processes in $B^2(\lambda)$ based on the previous work of \cite{BSS2017} who provided a notion of cointegrated linear processes in an arbitrary Hilbert space. Of course, it can be understood as an I(1) process of probability densities with support $K$. We further investigate cointegration in $B^2(\lambda)$.
	
	%The same notion of cointegrated density-valued linear processes was considered in \cite{Chang2016152} in $L^2(\lambda)$.  

	%The same notion of cointegrated density-valued linear processes was considered in \cite{Chang2016152} in $L^2(\lambda)$. However, \cite{Beare2017} pointed out that the time series in the paper cannot 
	
	%A notion of cointegration in , and it does not suffer the problem of the CKP time s 
	
	\subsection{Linear processes in $B^2(\lambda)$}
	Suppose that we have a time series of densities $f=(f_t, t \geq t_0)$ with compact support $K \subset \mathbb{R}$. We assume that $f_t \coloneqq \frac{d \nu_t}{d \mu}$ for each $t$, where $\nu_t$ is a probability measure for each $t$ and $\mu$  is the Lebesgue measure defined on $\mathbb{R}$.  Since $f_t$ has compact support $K$, it is absolutely continuous with respect to 
	$\lambda$. This implies that the corresponding $\lambda$-density of  the underlying probability measure $\nu_t$ is $\mu(K)^{-1} f_t$. Thus, the unit integral constraint singles out $f_t$ itself from the equivalence class containing $f_t$. The time series of $\mu$-densities $f=(f_t, t \geq t_0)$ may therefore be regarded as a stochastic process taking values in $B^2(\lambda)$.  
	
	Let $\varepsilon = (\varepsilon_t, t\in \mathbb{Z})$ be an independent and identically distributed (iid) sequence in $\mathfrak L^2_{B^2(\lambda)}$ and $(A_k, k\geq 0)$ be a sequence in $\mathfrak B (B^2(\lambda))$ satisfying $\sum_{k=0}^\infty \|A_k\|^2_{\mathfrak B (B^2(\lambda))} < \infty$. For fixed $t_0 \in \mathbb{Z} \cup \{-\infty \}$,  the sequence $f=(f_t, t \geq t_0)$ defined as
	\begin{align}\label{linearpro}
	f_t = \bigoplus_{k=0}^\infty A_k \varepsilon_{t-k}
	\end{align}
	is convergent in $\mathfrak L^2_{B^2(\lambda)}$ \citep[Lemma 7.1]{Bosq2000}. We call the sequence $f=(f_t, t \geq t_0)$ a Bayes linear process. Bayes linear processes are necessarily stationary. 
	
	%For each $t$, the realization of $B^2(\lambda)$-valued random variable $f_t$ is a proper probability density with compact support $K$. Thus, we sometime call  the $B^2(\lambda)$-valued random variable $f_t$ as a random density on $K$. This shows that a linear process of probability densities can be understood as a Bayes linear process.
	
	A Bayes linear process, \eqref{linearpro},  is said to be standard if  $\sum_{k=0}^\infty \|A_k\|_{\mathfrak B (B^2(\lambda))} < \infty$. In this case, $A \coloneqq \bigoplus_{k=0}^\infty A_k$ is convergent in $\mathfrak B (B^2(\lambda))$. As in \cite{BSS2017}, we define the long-run covariance operator $\Lambda \in \mathfrak B (B^2(\lambda))$ for a standard linear process as follows
	\begin{align*}
	\Lambda \coloneqq A C_{\varepsilon_0} A^*.
	\end{align*}
	
	%If $f=(f_t , t \geq t_0)$ is a stationary sequence, the sequence of inner-products $(\langle f_t, g \rangle_{B^2(\lambda)}, t \geq t_0)$ is stationary for any choice $g \in B^2(\lambda)$. Using the terminology from the subsequent sections, I could say that the cointegrating space is equal to $B^2(\lambda)$ itself, and the attractor space is trivial. In this case, the only matter is an initial condition on $f_{t_0}$ for the sequence of inner-products to be stationary.
	
	Since $\clr$ is an isometric isomorphism, it follows that the $\clr$ image of a (standard) linear process in $B^2(\lambda)$ is a (standard) linear process in  $\overline{L^2}(\lambda)$. Denote by $\tilde{g} \in \overline{L^2}(\lambda)$ the $\clr$ image of $g \in B^2(\lambda)$. Then the  $\overline{L^2}(\lambda)$-linear process paired with \eqref{linearpro} is 
	\begin{align*}
	\tilde{f}_t = \sum_{k=0}^\infty \tilde{A}_k \tilde{\varepsilon}_{t-k},
	\end{align*}
	where $\tilde{f}_t = \clr(f_t)$, $\tilde{A}_k = \clr \circ A_k \circ \clr^{-1}$ and $\tilde{\varepsilon}=(\tilde{\varepsilon_t}, t \in \mathbb{Z})$ is an iid sequence in $\overline{L^2}(\lambda)$. 
	
	\subsection{I(1) processes in $B^2(\lambda)$ and cointegration}
	\cite{Chang2016152} were the first to study I(1) processes taking values in an infinite dimensional Hilbert space, specifically the space $\overline{L^2}(\lambda)$. Subsequently, \cite{BSS2017} considered I(1) processes in an arbitrary separable complex Hilbert space. In this section we specialize the latter setting to the Bayes Hilbert space $B^2(\lambda)$. Let the time series of random densities of interest $f = (f_t, t\geq 0)$ be a sequence in $B^2(\lambda)$. Denote the time series of first differences $\Delta f = (\Delta f_t , t \geq 1 )$ with $\Delta f_t = f_t \ominus f_{t-1}$. We say that $f = (f_t, t \geq 0)$ is I(1) if $\Delta f_t$ satisfies
	\begin{align} \label{eqi1ma}
	\Delta f_t = \bigoplus_{k=0}^\infty N_k \varepsilon_{t-k}
	\end{align} 
	for all $t \geq 1$, where $\varepsilon = (\varepsilon_t, t\in \mathbb{Z})$ is an iid sequence in $\mathfrak L^2_{B^2(\lambda)}$, and $(N_k, k \geq 0)$ is a sequence in $\mathfrak B(B^2(\lambda))$ satisfying $\sum_{k=0}^\infty k \|N_k\|_{\mathfrak B(B^2(\lambda))} < \infty$ and $N \coloneqq \bigoplus_{k=0}^\infty N_k \neq 0$.
	
	From \cite{Chang2016152} and \cite{BSS2017}, it can be shown that the above I(1) sequence allows the so-called Beveridge-Nelson decomposition,
	\begin{align*} 
	f_t = (f_0 \ominus \nu_0) \oplus N \xi_t \oplus \nu_t, \quad t \geq 0
	\end{align*}  
	where $\xi_t = \bigoplus_{s=1}^t \varepsilon_s$ and $\nu_t = \bigoplus_{k=0}^\infty {N}_k \varepsilon_{t-k}$ with ${N}_k = -1 \odot (\bigoplus_{j=k+1}^\infty N_j )$. This means that $f_t$ is obtained by combining three different components: an initial condition $f_0 \ominus \nu_0$, a random walk component $N\xi_t$, and a stationary component $\nu_t$. 
	
	Given an I(1) sequence $f=(f_t, t \geq 0)$, we define the cointegrating space associated with $f$, denoted by $\mathfrak C(f,B^2(\lambda))$, to be the set
	\begin{align*}
	\mathfrak C(f, B^2(\lambda)) \coloneqq \left\{g \in B^2(\lambda) :\left(\langle f_t, g \rangle_{B^2(\lambda)} , t \geq 0 \right) \text{ is stationary for some $f_0$}\right\}.
	\end{align*}
	Moreover, we call $\mathfrak A (f,B^2(\lambda)) \coloneqq \mathfrak C(f,B^2(\lambda))^\perp$ the attractor space. These terms to indicate such subspaces are commonly used in the literature on cointegration, see e.g.\ \cite{Johansen1996}. 
	
	Define the long-run variance operator of $\Delta f$ as $\Lambda  \coloneqq N C_{\varepsilon_0} N^*$. If $C_{\varepsilon_0}$ is positive definite, it must be the case that 
	\begin{align*}
	\ker \Lambda = \ker N^*,
	\end{align*}
    and further \cite{BSS2017} showed that the cointegrating space is equal to $\ker N^*$. 
	
	The cointegrating space (or attractor space) can be identified by analyzing its $\clr$-image in $\overline{L^2}(\lambda)$, which may be convenient in practice. Suppose that the $\clr$ image  of \eqref{eqi1ma} is
	\begin{align*}
	\Delta \tilde{f}_t = \sum_{k=0}^\infty \tilde{N}_k \tilde{\varepsilon}_{t-k},
	\end{align*}
	where $\Delta \tilde{f}_t = \tilde{f}_t - \tilde{f}_{t-1}$ and $\tilde{N}_k = \clr \circ N_k \circ \clr^{-1}$. The Beveridge-Nelson decomposition is
	\begin{align*}
	\tilde{f}_t = (\tilde{f}_0-\tilde{\nu}_0) + \tilde{N} \tilde{\xi_t} + \tilde{\nu_t}, \quad t \geq 0
	\end{align*}
	where  $\tilde{N} = \clr \circ N \circ \clr^{-1}$.
	We also define the long-run variance operator of $\Delta \tilde{f}$ as $\tilde{\Lambda} = \tilde{N} C_{\tilde{\varepsilon}_0} \tilde{N}^*$, where $C_{\tilde{\varepsilon}_0}$ is the covariance operator of $\tilde{\varepsilon}_0$. The following result shows that we can fully identify the cointegrating space in $B^2(\lambda)$ associated with $f$ from the cointegrating space in $\overline{L^2}(\lambda)$ associated with $\tilde{f}$, the $\clr$ image of $f$.
	\begin{proposition}\label{clrimageprop}
		If $\,C_{\varepsilon_0}$ is positive definite, then $\mathfrak C(f, B^2(\lambda)) = \clr^{-1} \ker \tilde{N}^*$ 
	\end{proposition}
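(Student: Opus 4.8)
The plan is to reduce the proposition to the characterization of the cointegrating space already recorded above, namely that $\mathfrak C(f, B^2(\lambda)) = \ker N^*$ whenever $C_{\varepsilon_0}$ is positive definite. Granting this, the proposition is equivalent to the purely operator-theoretic identity $\ker N^* = \clr^{-1}(\ker \tilde N^*)$, so the whole argument amounts to transporting $\ker N^*$ across the isometric isomorphism $\clr$.

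The first step is to express the adjoint of $\tilde N$ through that of $N$. Because $\tilde N = \clr \circ N \circ \clr^{-1}$ and $\clr : B^2(\lambda) \to \overline{L^2}(\lambda)$ is unitary, its adjoint coincides with its inverse; I would record the conventions $\clr^* = \clr^{-1}$ and $(\clr^{-1})^* = \clr$ explicitly, since $\clr$ maps between two distinct Hilbert spaces. Taking adjoints of the composition then gives
\begin{align*}
\tilde N^* = (\clr \circ N \circ \clr^{-1})^* = (\clr^{-1})^* \circ N^* \circ \clr^* = \clr \circ N^* \circ \clr^{-1}.
\end{align*}
This is the key step and the only place where the unitarity of $\clr$ is genuinely used.

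With this relation in hand the kernels are read off directly. For any $\tilde g \in \overline{L^2}(\lambda)$ the displayed identity gives $\tilde N^* \tilde g = \clr\big( N^*(\clr^{-1}\tilde g)\big)$, and since $\clr$ is injective this vanishes if and only if $N^*(\clr^{-1}\tilde g) = 0$, that is, if and only if $\clr^{-1}\tilde g \in \ker N^*$. Hence $\clr^{-1}(\ker \tilde N^*) = \ker N^*$, and combining this with $\mathfrak C(f, B^2(\lambda)) = \ker N^*$ yields the claim. I anticipate no substantive obstacle: once the adjoint relation $\tilde N^* = \clr \circ N^* \circ \clr^{-1}$ is established the remainder is bookkeeping, and the only point requiring care is the treatment of $\clr$ as a map between two different spaces when forming adjoints. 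As an alternative route one could argue straight from the definition of $\mathfrak C$, using the isometry $\langle f_t, g\rangle_{B^2(\lambda)} = \langle \tilde f_t, \clr(g)\rangle_{L^2(\lambda)}$ to identify stationary scalar processes on the two sides and then invoking the corresponding kernel characterization in $\overline{L^2}(\lambda)$; this requires checking in addition that positive definiteness of $C_{\varepsilon_0}$ passes to $C_{\tilde\varepsilon_0} = \clr \circ C_{\varepsilon_0} \circ \clr^{-1}$, which it does because unitary conjugation preserves positive definiteness.
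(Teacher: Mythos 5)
Your proposal is correct and follows essentially the same route as the paper's own proof: both reduce the claim to $\mathfrak C(f, B^2(\lambda)) = \ker N^*$ via Proposition 3.1 of \cite{BSS2017}, establish $\clr^* = \clr^{-1}$, deduce $\tilde N^* = \clr \circ N^* \circ \clr^{-1}$, and transport the kernel across the bijection. Your explicit care with $\clr$ as a map between two distinct Hilbert spaces when forming adjoints is a minor but welcome refinement of the paper's argument.
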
 
	\begin{proof}
		Proposition 3.1 in \cite{BSS2017} implies that $\mathfrak C(f, B^2(\lambda)) = \ker N^* $. Note that $\clr^* = \clr^{-1}$ since for $\tilde{g} \coloneqq \clr (g)$,
		\begin{align*}
		\langle f, \clr^{-1} \tilde{g} \rangle_{B^2(\lambda)} = \langle \clr (f), \tilde{g} \rangle_{\overline{L^2}(\lambda)}. 
		\end{align*}
		This holds for any arbitrary $g \in B^2(\lambda)$. By the uniqueness of the adjoint map, $\clr^* = \clr^{-1}$. From these results, it is clear that
		\begin{equation*}
	\tilde{N}^* = \clr \circ N^* \circ \clr^{-1}.
		\end{equation*}
		Since $\clr$ and $\clr^{-1}$ are bijective, it follows that $\ker N^* = \clr^{-1} \ker \tilde{N}^*$. 
	\end{proof}
	
	\cite{Chang2016152} considered a cointegrated density-valued process with values in $L^2(\lambda)$. However,  	\cite{Beare2017} pointed out that the attractor space under their assumptions is always trivial, i.e. $\mathfrak A(f, L^2(\lambda)) = \{0\}$. This problem occurs because an I(1) process of probability densities in $L^2(\lambda)$ cannot satisfy the nonnegativity constraints required for each realization to be a proper density; the space of probability densities with compact support $K$ is strictly smaller than $L^2(\lambda)$ and  not a subspace. Due to the geometry of the Bayes Hilbert space, a cointegrated linear process in $B^2(\lambda)$ does not suffer from this problem. Furthermore, it is clear from the earlier discussion of Bayes Hilbert spaces in Section \ref{bayessp} that the space of probability densities supported on $K$ is isomorphic to $\overline{L^2}(\lambda)$, which is just a slightly smaller subspace than ${L^2}(\lambda)$ itself.

	\section{Cointegrated AR($p$) processes in $B^2(\lambda)$} \label{arden}
	In this section, we study AR($p$) processes with values in $B^2(\lambda)$ and obtain a version of the Granger-Johansen representation theorem based on the analytic operator pencil theory.   
	
	Suppose that a sequence of $\lambda$-densities $(f_t, t \geq -p+1)$ in $B^2(\lambda)$ evolves according to
	\begin{align} \label{arlaw}
	f_t = A_1 f_{t-1} \oplus \cdots \oplus A_p f_{t-p} \oplus \varepsilon_t
	\end{align} 
	for $t\geq1$, where $\varepsilon = (\varepsilon_t , t \in \mathbb{Z})$ is an iid sequence in $\mathfrak L^2_{B^2(\lambda)}$ and $A_1,\ldots,A_p \in \mathfrak B(B^2(\lambda))$. 
	Equation \eqref{arlaw} may be understood as an autoregressive law of motion for density-valued processes in $B^2(\lambda)$. The corresponding $\clr$ image of  \eqref{arlaw} with values in $\overline{L^2}(\lambda)$ is given by
	\begin{align} \label{clrarlaw}
	\tilde{f_t} = \tilde{A}_1 \tilde{f}_{t-1} + \cdots + \tilde{A}_p \tilde{f}_{t-p} + \tilde{\varepsilon_t}
	\end{align}  
	for $t\geq1$, where $\tilde{A}_k = \clr \circ A_k \circ \clr^{-1} $. It is clear that $\tilde{A}_k\in\mathfrak{B}(\overline{L^2}(\lambda))$, and that $\tilde{A}_k$ is compact if and only if $A_k$ is compact.  Let $\id_{\overline{L^2}(\lambda)}$ be the identity operator on $\overline{L^2}(\lambda)$, and  for $z \in \mathbb{C}$ define the operator pencil
	\begin{align*}
	\tilde{A}(z) = \id_{\overline{L^2}(\lambda)} - z \tilde{A}_1  - \cdots - z^p \tilde{A}_p.
	\end{align*}
	For each $z\in\mathbb C$, $\tilde{A}(z)$ is a bounded linear operator on $\overline{L^2_\mathbb{C}}(\lambda)$, the complexification of $\overline{L^2}(\lambda)$. The operator pencil $\tilde{A}(z) $ is polynomial in $z$; see \cite{Markus2012} for a detailed discussion of polynomial operator pencils. Let $\id_{B^2(\lambda)}$ be the identity operator on $B^2(\lambda)$, and define 
	\begin{align}
	A(z) = \id_{B^2(\lambda)} \ominus (z \odot A_1) \ominus \cdots \ominus (z^p \odot A_p) . 
	\end{align}
	For each $z$, $A(z)$ is a bounded linear operator on $B_\mathbb{C}^2(\lambda)$.	One can easily show that $A(z) = \clr_\mathbb{C}^{-1} \circ \tilde{A}(z) \circ \clr_\mathbb{C}$, so $\sigma(A) = \sigma(\tilde{A})$ holds.
	
	To further analyze \eqref{arlaw}, we employ the following mutually exclusive assumptions.  
	\begin{assumption*}[\textbf{S}]
		If $z \in \sigma(A)$ then $|z| > 1$.
	\end{assumption*}
	\noindent Let $D_{r}$ be the open disk centered at zero with radius $r>0$. Since $\sigma(A)$ is closed, Assumption (S) implies that there exists $\eta > 0$ such that $A(z)$ is invertible on $D_{1+\eta}$. 
	
	\begin{assumption*}[\textbf{N}]\leavevmode
		\begin{itemize}
			\item[(i)]	$A_1,\ldots,A_p \in \mathfrak B (B^2(\lambda))$ are compact operators. 
			\item[(ii)]  If $z\in\sigma(A)$ then $|z| > 1$ or $z=1$. Moreover, $	1 \in \sigma(A)$.
			\item[(iii)] If $\mathbf{P}\in\mathfrak B (B^2(\lambda))$ is a projection on  $\ran A(1)$, then the map
			\[(\id_{B^2(\lambda)} \ominus \mathbf{P}) A^{(1)}(1){\mid_{\ker A(1)}} : \ker A(1) \to \ran (\id_{B^2(\lambda)} \ominus \mathbf{P})\]
			is invertible. 
		\end{itemize}		
	\end{assumption*}
	\noindent Under Assumption (N)-(i), $A(z)$ is an index-zero Fredholm operator for each $z \in \mathbb{C}$. This implies that $\ran A(1)$ is closed. (N)-(i) is not restrictive in practice since it is in fact common to assume the compactness of autoregressive operators. (N)-(ii) implies the existence of $\eta>0$ such that $A(z)$ is invertible everywhere on $D_{1+\eta}$ except at $z=1$. The role of (N)-(iii) will be discussed later. 
	
	The AR($p$) law of motion \eqref{arlaw} behaves very differently depending on whether $A(z)$ satisfies Assumption (S) or (N). In the former case it generates a stationary process which may be represented as a standard Bayes linear process.   
	\begin{proposition}\label{propstat}
		Suppose that Assumption $(\mathrm{S})$ is satisfied. Then the AR$(p)$ law of motion \eqref{arlaw} admits a unique stationary solution in $\mathfrak L^2_{B^2(\lambda)}$ given by
		\begin{equation}\label{constat2}
		f_t =\bigoplus_{k=0}^\infty N_k \varepsilon_{t-k}
		\end{equation} 
		for all $t \geq 1$. Moreover,  $N_k$ is given by
		% $(f_t, t \geq 1)$ is a standard Bayes linear process, i.e. $\sum_{k=0}^\infty \|N_k\|_{\mathfrak B(B^2(\lambda))} < \infty$. 
		\begin{align*}
		N_k = \frac{1}{k !} \odot N^{(k)}(0)
		\end{align*}
		and $N(z) \coloneqq A(z)^{-1}$ is holomorphic on $D_{1+\eta}$ for some $\eta > 0$. 
	\end{proposition}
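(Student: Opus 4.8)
The plan is to push the entire problem through the $\clr$ isometry into $\overline{L^2}(\lambda)$, where \eqref{arlaw} becomes the ordinary Hilbert-space AR($p$) recursion \eqref{clrarlaw}, prove the statement there, and then transfer back. Because $\clr_{\mathbb C}$ is an isometric isomorphism intertwining the two pencils via $A(z)=\clr_{\mathbb C}^{-1}\circ\tilde A(z)\circ\clr_{\mathbb C}$, it maps $\mathfrak L^2_{B^2(\lambda)}$ isometrically onto $\mathfrak L^2_{\overline{L^2}(\lambda)}$, preserves operator norms, convergence, and stationarity, and satisfies $N_k=\clr^{-1}\circ\tilde N_k\circ\clr$ with $\tilde N(z)=\tilde A(z)^{-1}$. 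It therefore suffices to produce the claimed moving-average representation for $\tilde f_t$ and to establish uniqueness among stationary solutions in $\mathfrak L^2_{\overline{L^2}(\lambda)}$; the Bayes-space statement then follows verbatim.

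First I would establish holomorphy and coefficient decay. The pencil $\tilde A(z)$ is a polynomial, hence entire, operator-valued map, and on the open set where it is invertible its inverse $\tilde N(z)=\tilde A(z)^{-1}$ is holomorphic (locally via the Neumann series, as in standard analytic operator-pencil theory). Assumption (S) gives $\sigma(\tilde A)=\sigma(A)\subset\{|z|>1\}$, and since $\sigma(\tilde A)$ is closed one may choose $\eta>0$ with $1+\eta<\min\{|z|:z\in\sigma(\tilde A)\}$, so that $\tilde A(z)$ is invertible throughout $D_{1+\eta}$. Consequently $\tilde N(z)$ is holomorphic on $D_{1+\eta}$ and equals its Taylor series $\sum_{k\geq0}\tilde N_k z^k$, with $\tilde N_k=\tfrac1{k!}\tilde N^{(k)}(0)$, converging in operator norm there. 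A Cauchy estimate on a circle of radius $1+\eta'$ with $0<\eta'<\eta$ yields $\|\tilde N_k\|_{\mathfrak B(\overline{L^2}(\lambda))}\leq C(1+\eta')^{-k}$, whence $\sum_k\|\tilde N_k\|<\infty$ (indeed $\sum_k k\|\tilde N_k\|<\infty$). Thus $\tilde f_t=\sum_{k\geq0}\tilde N_k\tilde\varepsilon_{t-k}$ is a standard, and hence stationary, linear process convergent in $\mathfrak L^2_{\overline{L^2}(\lambda)}$.

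Next I would verify that this is a solution by matching coefficients in $\tilde A(z)\tilde N(z)=\id$, which gives $\tilde N_0=\id$ and $\tilde N_m=\sum_{j=1}^{\min(m,p)}\tilde A_j\tilde N_{m-j}$ for $m\geq1$. Substituting the series for $\tilde f_t$ into $\sum_{j=1}^p\tilde A_j\tilde f_{t-j}+\tilde\varepsilon_t$ and reindexing by $m=j+k$ (legitimate because $\sum_k\|\tilde N_k\|<\infty$) reproduces $\sum_{m\geq0}\tilde N_m\tilde\varepsilon_{t-m}=\tilde f_t$. For uniqueness, if $g$ is another stationary $\mathfrak L^2$ solution then the $\clr$-difference $h_t=\tilde g_t-\tilde f_t$ obeys the homogeneous recursion $h_t=\sum_{j=1}^p\tilde A_j h_{t-j}$; applying the causal filter $\tilde N(L)=\sum_k\tilde N_k L^k$, which is bounded on stationary $\mathfrak L^2$ sequences since $\sum_k\|\tilde N_k\|<\infty$, and using that the convolution of the coefficient sequences of $\tilde N(z)$ and $\tilde A(z)$ equals the identity, gives $h_t=\tilde N(L)\tilde A(L)h_t=0$. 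Transferring everything back through $\clr^{-1}$ delivers \eqref{constat2} with $N_k=\clr^{-1}\tilde N_k\clr=\tfrac1{k!}\odot N^{(k)}(0)$, together with uniqueness in $\mathfrak L^2_{B^2(\lambda)}$.

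I expect the main obstacle to be the analytic input of the second paragraph: establishing that $\tilde A(z)^{-1}$ is holomorphic on a disk strictly larger than the closed unit disk and extracting the geometric decay of $\tilde N_k$. This decay is precisely what simultaneously guarantees that the moving-average series defines a genuine element of $\mathfrak L^2_{\overline{L^2}(\lambda)}$ and that the inverse filter used in the uniqueness argument is bounded; the remaining steps are bookkeeping carried across the $\clr$ isometry.
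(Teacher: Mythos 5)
Your proof is correct, but it takes a genuinely different route from the paper's. The paper stacks the AR($p$) recursion into a companion-form AR(1) on $\mathcal H^p$ (Bosq's Markovian representation), uses a Schur-complement computation to show $\sigma(\mathbb A)=\sigma(\tilde A)$, invokes Gelfand's formula to get $\|\mathcal A^j\|\le a^j$ with $a<1$, and then reads off $\tilde A(z)^{-1}=\Pi\,\mathbb A(z)^{-1}\Pi^*$ from the Neumann series $\mathbb A(z)^{-1}=\sum_k \mathcal A^k z^k$; this yields the explicit coefficient formula $\tilde N_k=\Pi\mathcal A^k\Pi^*$ and lets the stationarity and existence claims ride on the standard Hilbertian AR(1) theory. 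You instead work directly with the polynomial pencil $\tilde A(z)$: closedness of the spectrum plus Assumption (S) gives invertibility on a disk $D_{1+\eta}$ (the same observation the paper records right after stating Assumption (S)), holomorphy of the inverse follows from openness of the set of invertible operators, and Cauchy estimates deliver the geometric decay $\|\tilde N_k\|\le C(1+\eta')^{-k}$ that makes the moving-average series converge in $\mathfrak L^2_{\overline{L^2}(\lambda)}$. Your coefficient-matching verification and the inverse-filter uniqueness argument are both sound (the two-sidedness of $\tilde A(z)^{-1}$ gives both convolution identities you need), and in fact you make the uniqueness step explicit where the paper leaves it implicit in the citation to Bosq. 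What your approach gives up is the concrete representation of the $\tilde N_k$ as compressions of powers of the companion operator; what it buys is a shorter, more self-contained analytic argument that avoids the block-operator bookkeeping. Both transfers back through $\clr$ are identical and unproblematic.
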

	\begin{proof}[Proof of Proposition \ref{propstat}]
		Consider the $\clr$ image of \eqref{clrarlaw} in $\overline{L^2}(\lambda)$ and let $\mathcal H = \overline{L^2}(\lambda)$. Using the Markovian representation in \citet[p. 128]{Bosq2000}, the AR($p$) law of motion in $\mathcal H$ can be re-expressed as an AR(1) law of motion in $\mathcal H^p$ by writing
		\begin{align} \label{comar1}
		\mathcal F_t = \mathcal A \mathcal F_{t-1} + \mathcal E_t,
		\end{align}
		where
		\begin{align*}
		\mathcal F_t =\left[\begin{matrix}\tilde{f}_t\\ \tilde{f}_{t-1} \\ \vdots \\ \tilde{f}_{t-p+1}\end{matrix} \right], \quad  \mathcal E_t =\left[\begin{matrix}\tilde{\varepsilon_t}\\0 \\ \vdots \\ 0\end{matrix} \right], \quad \mathcal A  = \left[\begin{matrix} \tilde{A}_1 & \tilde{A}_2 & \cdots & \tilde{A}_{p-1} & \tilde{A}_p \\ \id_{\mathcal H} & 0 & \cdots &0 & 0 \\ \vdots & \vdots& \ddots & \vdots &\vdots \\ 0 &0 & \cdots &  \id_{\mathcal H} & 0\end{matrix} \right].
		\end{align*} 
		Define $\mathbb A(z) \coloneqq  \id_{\mathcal H^p} - \mathcal A z$, i.e.
		\begin{align*}
		\mathbb A(z) &= \left(\begin{array}{c:ccccc} \id_{\mathcal H} - \tilde{A}_1z & -\tilde{A}_2 z & -\tilde{A}_3z & -\tilde{A}_4z & \cdots &-\tilde{A}_pz  \\ \hdashline -\id_{\mathcal H} z & \id_{\mathcal H} & \zero  & \zero  & \cdots & \zero \\
		\zero & -\id_{\mathcal H} z & \id_{\mathcal H}  & \zero  & \cdots & \zero \\
		\vdots& \vdots & \vdots& \vdots & \ddots & \vdots  \\
		\zero & \zero  & \zero  & \zero  & -\id_{\mathcal H} z & \id_{\mathcal H} 
		\end{array}\right)\\& \eqqcolon \left(\begin{matrix} \mathbb A_{11}(z)& \mathbb A_{12}(z) \\ \mathbb A_{21}(z) & \mathbb A_{22}(z)\end{matrix}\right),
		\end{align*} 
		where one can easily verify that $\mathbb A_{22}(z)$ is invertible. Define the Schur complement of $\mathbb A_{22}(z)$ as $\mathbb A^+_{11}(z) \coloneqq \mathbb A_{11}(z) - \mathbb A_{12}(z) \mathbb A_{22}^{-1} \mathbb A_{21}(z)$. From a little algebra, it can be easily verified that $\mathbb A^+_{11}(z) = \tilde{A}(z)$. Since  $\mathbb A_{22}(z)$ is invertible, $\mathbb A(z)$ is invertible if and only if $\mathbb A^+_{11}$ is invertible. Therefore, it follows that $\sigma(\mathbb A) = \sigma(\tilde{A}) \subset \{z : |z| > 1 \}$. Define
		\[s(\mathcal A) \coloneqq \{ z \in \mathbb{C} : \mathcal A - z \id_{\mathcal H^p} \text{ is invertible} \}\]
		and
		\[\rho(\mathcal A) \coloneqq \sup \{ |z| : z \in \sigma(\mathcal A)  \},\]
		the spectrum and spectral radius of the bounded linear operator $\mathcal A$. From the construction of the operator pencil $\mathbb A(z)$ and Assumption (S), it is clear that 
		\begin{align} 
		\rho(\mathcal A) \coloneqq \sup \{ |z| : z \in s(\mathcal A) \} < 1. \label{spectradius}
		\end{align} 
		In view of Gelfand's formula \citep[Proposition VII.3.8]{Conway1994}, \eqref{spectradius} implies that there exists $j$ such that 
		\begin{align} \label{radiusconv}
		\|\mathcal A^j\|_{\mathfrak B(\mathcal{H}^p)} < a^j, \quad a\in (0,1).
		\end{align}
		From a well known result on the inverse of Banach-valued functions
		, $\mathbb{A}(z)^{-1}$ is given as
		\begin{align*}
		\mathbb{A}(z)^{-1} = \id_{H} + \mathcal Az + \mathcal A^2 z^2 + \cdots 
		\end{align*}
		which is convergent in $D_{1+\eta}$, and indeed holomorphic on $D_{1+\eta}$, for some $\eta > 0$ because of  \eqref{radiusconv}. This shows that  the AR(1) law of motion in \eqref{comar1} may be represented as
		\begin{align*}
		\mathcal F_t = \sum_{k=0}^\infty \mathcal A^k  \mathcal E_{t-k}, \quad t \in \mathbb{Z}.
		\end{align*}
		Let $\Pi : \mathcal H^p \to \mathcal H$ be the coordinate projection given as $\Pi(x_1,\ldots,x_p) = x_1$ and let $\Pi^*: \mathcal H \to \mathcal H^p$ be its adjoint. Then  one can easily verify that
		\begin{align*}
		\tilde{f}_t = \sum_{k=0}^\infty  \Pi \mathcal A^k \Pi^*\,   \tilde{\varepsilon}_{t-k}, \quad t \in \mathbb{Z}.
		\end{align*}
		A simple algebra yields $\Pi \mathbb{A}(z)^{-1} \Pi^*  = \mathbb{A}_{11}^+(z)^{-1} = \tilde{A}(z)^{-1}$, and it is clear that $\Pi \mathcal A^k \Pi^*$ is the $k$th coefficient in the Taylor series of $\tilde{A}(z)^{-1}$ around $z=0$.
		
		Holomorphicity of $A(z)^{-1}$ on $D_{1+\eta}$ is inherited from the holomorphicity of $\tilde{A}(z)^{-1}$ on $D_{1+\eta}$, which is implied by the holomorphicity of $\mathbb{A}(z)^{-1}$ established above and the relation $\tilde{A}(z)^{-1}=\Pi \mathbb{A}(z)^{-1} \Pi^*$.
	\end{proof}
	
	\begin{remark}
		Norm-summability, $\sum_{k=0}^\infty \|\tilde{N}_k\|_{\mathfrak B(B^2(\lambda))} < \infty$, is a natural consequence of holomorphicity of $A(z)^{-1}$ on $D_{1+\eta}$ for some $\eta>0$. This shows that  \eqref{constat2} is a standard Bayes linear process.
	\end{remark}
	%mability, $\sum_{k=0}^\infty \|\tilde{N}_k\|_{\mathcal L_{B^2(\lambda)}} < \infty$, is a natural consequence of holomorphicity 
	%\noindent In fact, the above proposition can be restated for an AR($p$) process with values in an arbitrary Hilbert space.  
	%In practice, it is expected to analyze the $\clr$ image of $(\tilde{f}_t, t \geq 1)$ instead of the original sequence with values in $B^2(\lambda)$. $(\tilde{f}_t, t \geq 1)$ is a stationary linear process with values in $\overline{L}^2_{\mathbb{R}(\lambda)}$. Therefore, we have already well developed theory, e.g. \cite{Bosq2000}.  
	%\newline
	
	Under Assumption (N), a process in $B^2(\lambda)$ satisfying the AR($p$) law of motion \eqref{arlaw} is no longer stationary. Loosely speaking, it corresponds to an I($d$) process for $d \geq 1$.

	\begin{proposition} \label{propi1}
		Under Assumption $(\mathrm{N})$-$(\mathrm{i},\mathrm{ii})$, the AR$(p)$ law of motion does not admit a stationary solution. Instead, it allows the following I($d$) solution.
		\begin{equation*}
		f_t = \tau \oplus  {N}_{-d} {\xi}_{(d),t} \oplus  {N}_{-d+1} {\xi}_{(d-1),t} \oplus \cdots \oplus  {N}_{-1} {\xi}_{(1),t} + {\nu}_t, 
		\end{equation*} 
		where  $\xi_{(0),t} = \varepsilon_t$, $\xi_{(\ell),t} = \bigoplus_{s=1}^t \xi_{(\ell-1),s}$ for $\ell=1,\ldots,d$, $\tau = \bigoplus_{j=0}^{d-1} t^j \tau_j$, $(\tau_0,\ldots,\tau_{d-1})$ are time invariant elements of $\mathfrak L^2_{B^2(\lambda)}$,  $(N_{j}, j=-d,\ldots,-1)$ are the coefficients of $(1-z)^j$ in the Laurent expansion of $A(z)^{-1}$. 
		Moreover, $\nu_t = \bigoplus_{j=0}^\infty {N}^H_{j,0} {\varepsilon}_{t-j}$ and ${N}^H_{j,0}$ is the coefficient of $z^j$ in the Taylor expansion of ${N}^H(z)$, the holomorphic part of the Laurent expansion of $A(z)^{-1}$,  around $z=0$. 	
		
		Assumption $(\mathrm{N})$-$(\mathrm{iii})$ is necessary and sufficient for $d=1$, and we have 
		\begin{align}\label{eqresidue}
		\lim_{z \to 1} (1-z) \odot A(z)^{-1} = - \left[\left(\id_{B^2(\lambda)} \ominus \mathbf{P}\right) A^{(1)}(1){\mid_{\ker A(1)}}\right]^{-1} \left(\id_{B^2(\lambda)} \ominus  \mathbf{P}\right),
		\end{align}
		which does not depend on the choice of the projection $\mathbf{P}$ on $\ran A(1)$.
	\end{proposition}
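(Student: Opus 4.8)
The plan is to transport the whole analysis to the $\clr$-image pencil $\tilde{A}(z)$ on $\overline{L^2}(\lambda)$ and then pull every conclusion back to $B^2(\lambda)$. Because $\clr_{\mathbb C}$ is an isometric isomorphism with $A(z)=\clr_{\mathbb C}^{-1}\circ\tilde{A}(z)\circ\clr_{\mathbb C}$ and $\sigma(A)=\sigma(\tilde{A})$, any Laurent expansion of $\tilde{A}(z)^{-1}$ about $z=1$ transfers coefficient by coefficient via $N_j=\clr^{-1}\circ\tilde{N}_j\circ\clr$, and the differentiation identity $A^{(1)}(1)=\clr_{\mathbb C}^{-1}\circ\tilde{A}^{(1)}(1)\circ\clr_{\mathbb C}$ will let me recover the $B^2(\lambda)$ residue formula \eqref{eqresidue} from its $\overline{L^2}(\lambda)$ counterpart at the end.

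First I would establish meromorphy and the Laurent expansion. Under (N)-(i) each $\tilde{A}_k$ is compact, so $z\mapsto\tilde{A}(z)=\id_{\overline{L^2}(\lambda)}-z\tilde{A}_1-\cdots-z^p\tilde{A}_p$ is an analytic family of index-zero Fredholm operators, and $\tilde{A}(0)=\id_{\overline{L^2}(\lambda)}$ is invertible. The analytic Fredholm theorem then gives that $\tilde{A}(z)^{-1}$ is meromorphic on $D_{1+\eta}$ with poles of finite order confined to $\sigma(\tilde{A})$; by (N)-(ii) the only such pole in $D_{1+\eta}$ is at $z=1$, of some finite order $d\geq1$, yielding
\begin{align*}
\tilde{A}(z)^{-1}=\sum_{j=-d}^{\infty}\tilde{N}_j\,(1-z)^j,\qquad \tilde{N}_{-d}\neq0.
\end{align*}
Reading the AR law formally in the lag operator as $\tilde{f}_t=\tilde{A}(z)^{-1}\tilde{\varepsilon}_t$ and substituting this expansion, I would identify the I($d$) structure exactly as in the scalar Granger--Johansen theorem: each factor $(1-z)^{-\ell}$ corresponds to $\ell$-fold partial summation and so the principal coefficients generate the cumulated terms $\tilde{\xi}_{(\ell),t}$, the holomorphic part $\tilde{N}^H(z)$ generates the stationary component $\tilde{\nu}_t$, and the boundary corrections from summing a process started at finite time produce the polynomial-in-$t$ drifts $\tau_j$. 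Since $\tilde{N}_{-d}\neq0$ the filter is not summable, so no stationary solution exists, proving the first two assertions after applying $\clr^{-1}$.

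The heart of the argument, and the main obstacle, is showing that $d=1$ is equivalent to (N)-(iii) and extracting the residue. Here I would localize at $z=1$ through a Schur-complement reduction. Let $\tilde{\mathbf P}=\clr\circ\mathbf P\circ\clr^{-1}$ project onto the closed subspace $\ran\tilde{A}(1)$, and decompose the domain as $\ker\tilde{A}(1)\oplus(\ker\tilde{A}(1))^\perp$ and the codomain as $\ran(\id-\tilde{\mathbf P})\oplus\ran\tilde{A}(1)$; the index-zero Fredholm property makes these complementary pieces finite-dimensional of equal dimension and makes the block $\tilde{\mathbf P}\tilde{A}(1)\mid_{(\ker\tilde{A}(1))^\perp}$ invertible. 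Writing $\tilde{A}(z)$ in this block form and using $\tilde{A}(z)=\tilde{A}(1)-(1-z)\tilde{A}^{(1)}(1)+O((1-z)^2)$, the three blocks meeting $\ker\tilde{A}(1)$ or $\ran(\id-\tilde{\mathbf P})$ vanish at $z=1$, so the coupling blocks are $O(1-z)$ and the Schur complement of the invertible block reduces at leading order to
\begin{align*}
S(z)=-(1-z)\,(\id-\tilde{\mathbf P})\,\tilde{A}^{(1)}(1)\mid_{\ker\tilde{A}(1)}+O((1-z)^2).
\end{align*}
Invertibility of $\tilde{A}(z)$ near $z=1$ is governed by $S(z)$, and the pole order of $\tilde{A}(z)^{-1}$ equals that of the finite-dimensional $S(z)^{-1}$. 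Hence $d=1$ holds precisely when the leading coefficient $M\coloneqq(\id-\tilde{\mathbf P})\tilde{A}^{(1)}(1)\mid_{\ker\tilde{A}(1)}:\ker\tilde{A}(1)\to\ran(\id-\tilde{\mathbf P})$ is invertible, which is exactly (N)-(iii) read through $\clr$; if $M$ is singular then $S(z)^{-1}$ acquires a higher-order pole and $d\geq2$. Confirming that the leading Schur complement is genuinely $M$ and that its invertibility is both necessary and sufficient is the delicate point.

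Finally I would read off the residue and verify independence of $\mathbf P$. The block-inversion formula shows that the only singular block of $\tilde{A}(z)^{-1}$ is the $S(z)^{-1}$ block, the off-diagonal and lower-right blocks staying bounded because the coupling blocks are $O(1-z)$; expanding $S(z)^{-1}=-(1-z)^{-1}M^{-1}+O(1)$ and re-embedding into the full space gives $\lim_{z\to1}(1-z)\tilde{A}(z)^{-1}=-M^{-1}(\id-\tilde{\mathbf P})$, the $\overline{L^2}(\lambda)$ form of \eqref{eqresidue}. For independence of the projection I argue intrinsically: multiplying $\tilde{A}(z)\tilde{A}(z)^{-1}=\id$ and $\tilde{A}(z)^{-1}\tilde{A}(z)=\id$ by $(1-z)$ and letting $z\to1$ forces the residue $R\coloneqq\lim_{z\to1}(1-z)\tilde{A}(z)^{-1}$ to satisfy $\tilde{A}(1)R=0$ and $R\tilde{A}(1)=0$, so $\ran R\subseteq\ker\tilde{A}(1)$ and $\ran\tilde{A}(1)\subseteq\ker R$; these constraints determine $R$ uniquely regardless of $\mathbf P$. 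Transferring through $\clr$ then yields \eqref{eqresidue} and completes the proof.
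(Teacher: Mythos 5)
Your argument is correct, but it reaches the key conclusions by a genuinely different route than the paper. The paper localizes at $z=1$ via Howland's multiplicative factorization (its Lemma A.1): $\tilde{A}(z)=(P_1+(1-z)Q_1)\cdots(P_d+(1-z)Q_d)G(z)$ with finite-rank $Q_i$, inverts each factor explicitly to read off the pole order, and then invokes two auxiliary lemmas (A.2, based on Howland's Corollary 3.5, for the equivalence of (N)-(iii) with a simple pole; A.3, which identifies $G(1)$ with $A(1)-(\id-P_1)A^{(1)}(1)$) to arrive at the residue formula. You instead get meromorphy directly from the analytic Fredholm theorem and handle both the $d=1$ equivalence and the residue by a single Schur-complement computation in the block decomposition $\ker\tilde{A}(1)\oplus(\ker\tilde{A}(1))^{\perp}\to\ran(\id-\tilde{\mathbf P})\oplus\ran\tilde{A}(1)$; the leading term of the Schur complement is exactly the operator $M=(\id-\tilde{\mathbf P})\tilde{A}^{(1)}(1)\mid_{\ker\tilde{A}(1)}$ appearing in (N)-(iii), so invertibility of $M$ is visibly equivalent to a simple pole, and block inversion hands you $\lim_{z\to1}(1-z)\tilde{A}(z)^{-1}=-M^{-1}(\id-\tilde{\mathbf P})$ in one step. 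Your approach is more self-contained (no appeal to the Howland factorization machinery) and produces the residue in precisely the form stated in \eqref{eqresidue}; the paper's factorization buys a cleaner global description of the full principal part of the Laurent series (all of $\tilde N_{-d},\ldots,\tilde N_{-1}$ at once), which it uses to write down the I($d$) solution explicitly. The I($d$)-construction and the nonexistence of a stationary solution are treated at the same (somewhat informal) level of rigor in both arguments.

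One small blemish: in your final step you assert that the relations $\tilde{A}(1)R=0$ and $R\tilde{A}(1)=0$ ``determine $R$ uniquely.'' They do not --- $R=0$ also satisfies them --- so that argument as stated does not establish independence of $\mathbf P$. But no such argument is needed: the left-hand side of \eqref{eqresidue} is a limit defined without any reference to $\mathbf P$, so the fact that it equals $-M^{-1}(\id-\tilde{\mathbf P})$ for every admissible choice of $\mathbf P$ already shows the right-hand side is independent of that choice. This is exactly the (one-line) argument the paper uses.
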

	\begin{proof}
		For notational convenience, we work with $\tilde{A}(z)$ and set $\mathcal{H} = \overline{L^2}(\lambda)$. Assumption (N)-(i,ii) implies that $1$ is an isolated singularity of $\tilde{A}(z)$.    
		From Lemma \ref{lem1}, we know that there exist finite dimensional projections $Q_1,\ldots,Q_d$ such that, for $z$ in a punctured neighborhood of 1,
		\begin{align*}
		\tilde{A}(z) = (P_1 + (1-z)Q_1) \cdots(P_d + (1-z)Q_d)G(z),
		\end{align*}	
		where $P_i = \id_\mathcal{H} - Q_i$ for $i=1,\ldots,d$, and $G(z)$ is a holomorphic function that is invertible at $z=1$. %In fact, $P_1$ can be any projection on $\ran \tilde{A}(z_0)$, so we set $P_1 = \clr \circ \mathbf{P} \circ \clr^{-1}$.  
		Note that for $i=1,\ldots,d$
		\begin{align*}
		(P_i + (1-z) Q_i)^{-1} = (P_i + (1-z)^{-1} Q_i).
		\end{align*}
		This shows that in a punctured neighborhood of $z=1$,
		\begin{align}\label{howlandexpansion}
		\tilde{A}(z)^{-1} &= G(z)^{-1} (P_d + (1-z)^{-1} Q_d) \cdots (P_1 + (1-z)^{-1}Q_1).
		\end{align}	
		Therefore, $\tilde{A}(z)^{-1}$ is meromorphic and the lowest order of the Laurent series is $-d$. Moreover, 	$\tilde{N}(z)\coloneqq(1-z)^d \tilde{A}(z)^{-1}$  is given as follows. 
		\begin{align} \label{eqadd01}
		\tilde{N}(z) = \tilde{N}_{-d} + \tilde{N}_{-d+1}(1-z) + \cdots + \tilde{N}_{-1}(1-z)^{d-1} +  \tilde{N}^H(z) (1-z)^d,
		\end{align} By applying the linear filter induced by \(\tilde{N}(z)\) to \eqref{arlaw}, we obtain the following difference equation,
		\begin{align}\label{eqadd02}
		\Delta^d \tilde{f}_t = \tilde{N}_{-d} \tilde{\varepsilon}_t + \tilde{N}_{-d+1}(\Delta \tilde{\varepsilon}_t) + \cdots + \tilde{N}_{-1}(\Delta^{d-1} \tilde{\varepsilon}_t) + \Delta^d\nu_t,
		\end{align} where $\nu_t = \sum_{j=0}^\infty \tilde{N}^H_{j,0} \tilde{\varepsilon}_{t-j}$ and $\tilde{N}^H_{j,0} = \mathrm{d}^j\tilde{N}^H(z)/\mathrm{d}z^j \mid_{z=0}$.
		Clearly, the process given by 
		\begin{align}
		\tilde{f}_0^* = \nu_0, \quad \tilde{f}_t^* = \tilde{N}_{-d} \tilde{\xi}_{(d),t} +  \tilde{N}_{-d+1} \tilde{\xi}_{(d-1),t} + \cdots +  \tilde{N}_{-1} \tilde{\xi}_{(1),t} + \tilde{\nu}_t 
		\end{align}
		is a solution to the difference equation \eqref{eqadd02}. It is completed by adding the solution to $\Delta \tilde{f}_t = 0$, which is given by $\sum_{j=0}^{d-1} t^{j} \tilde{\tau}_j$ for some time invariant $\tau_0,\ldots,\tau_{d-1} \in \mathfrak L^2_{B^2(\lambda)}$.  
		This proves the first statement. 
		
		Additionally under Assumption (N)-(iii), Lemma \ref{lem2} implies that $d=1$. The only remaining thing is to verify the residue formula \eqref{eqresidue}. From \eqref{howlandexpansion} we know that
		\begin{align*}
		\lim_{z\to 1}(1-z) \tilde{A}(z)^{-1} &= G(1)^{-1} (\id_\mathcal{H}- P_1).
		\end{align*}	
		Define
		\begin{align*}
		&\mathbb{A}\coloneqq G(1)^{-1}{\mid_{\ran(\id_{\mathcal H}-P_1)}},\\
		&\mathbb{B} \coloneqq G(1){\mid_{\ker \tilde{A}(1)}}. 
		\end{align*}
		From Lemma \ref{lem3}(a,b) we know that $\mathbb{B}=  - (\id_\mathcal{H}-P_1) \tilde{A}^{(1)}(1){\mid_{\ker \tilde{A}(1)}}$ and $\ran \mathbb{B} \subset \ran(\id_{\mathcal H}-P_1)$. Since $\mathbb{B}$ as a map from $\ker \tilde{A}(1)$ to $\ran(\id_{\mathcal H}-P_1)$  is invertible under Assumption (N)-(iii), it is clear that $\mathbb{A}\mathbb{B} = \id_{\ker \tilde{A}(1)}$. This shows that 
		\begin{align*}
		\tilde{N}_{-1} \coloneqq \lim_{z\to 1}(1-z) \tilde{A}(z)^{-1}=\mathbb A(\id_{\mathcal H}-P_1) = \mathbb{B}^{-1} (\id_\mathcal{H}-P_1).
		\end{align*}
		Under Lemma \ref{lem1}, we may choose $P_1$ to be any projection on $\ran \tilde{A}(1)$, so the expression for $\tilde{N}_{-1}$ just obtained cannot depend on the particular choice of projection.
	\end{proof}
	\noindent 	Under Assumption (N)-(i,ii), Proposition \ref{propi1} says that the AR($p$) law of motion \eqref{arlaw} allows I(d) solutions for $d\geq 1$. The most common case in practice may be I(1), and such an I(1) solution is guaranteed under an extra condition, Assumption (N)-(iii).

	\begin{remark}
		When $\mathcal H = \mathbb{R}^n$, \cite{Johansen1991} provided a necessary and sufficient condition on the autoregressive matrix polynomial and its first derivative at one for an AR($p$) law of motion to be I(1). It is commonly called the I(1) condition. Assumption (N)-(iii) plays the same role for an AR($p$) law of motion in $B^2(\lambda)$. \cite{BSS2017} provided a sufficient condition for the existence of I(1) solution when $p>1$ in an arbitrary complex Hilbert space, and Assumption (N)-(iii) is in fact a reformulation of it. However, note that we showed that (N)-(iii) is a necessary and sufficient condition for the existence of I(1) solution. Therefore, Proposition \ref{propi1} extends the version of the Granger-Johansen representation theorem in \cite{BSS2017} when $p > 1$. 
	\end{remark}
	
	\begin{remark}
		When $p=1$, an I(1) solution can be guaranteed without requiring compactness of the autoregressive operator. See Theorem 4.1 in \cite{BSS2017}.
	\end{remark}
	
	Under Assumption (N)-(i,ii,iii), one natural consequence of Proposition \ref{propi1} is the following Beveridge-Nelson decomposition: 	for some $f_0, \nu_0 \in B^2(\lambda)$,	\begin{align}\label{bndecomp}
	f_t = \left(f_0 \ominus \nu_0\right) \oplus N(1) \xi_t \oplus \nu_t, \quad t \geq 0
	\end{align}
	where $\xi_t = \bigoplus_{s=1}^t \varepsilon_s$, $\nu_t = \bigoplus_{k=0}^\infty N_k \varepsilon_{t-k}$ and $N(1)$ is a finite rank operator which can be explicitly obtained from the residue formula \eqref{eqresidue}. If $\varepsilon = (\varepsilon_t, t \in \mathbb{Z})$ has a positive definite covariance operator,  one can easily identify the cointegrating and attractor spaces of $B^2(\lambda)$ from the decomposition \eqref{bndecomp}.
	\begin{proposition}
		Assume that $C_{\varepsilon_0}$ is positive definite and  $(\mathrm{N})$-$(\mathrm{i},\mathrm{ii},\mathrm{iii})$ hold. Then $\mathfrak C (f,B^2(\lambda)) = [\ker A(1)]^\perp$. 
	\end{proposition}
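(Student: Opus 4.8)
The plan is to reduce the claim to a description of $\ker N(1)^*$, where $N(1)$ is the finite-rank random-walk coefficient in the Beveridge-Nelson decomposition \eqref{bndecomp}, and then to read off the range of $N(1)$ directly from the residue formula \eqref{eqresidue}. Under $(\mathrm{N})$-$(\mathrm{i},\mathrm{ii},\mathrm{iii})$, Proposition \ref{propi1} guarantees that $f$ is I(1) and admits \eqref{bndecomp}, so that $\Delta f$ has the moving-average form \eqref{eqi1ma} with random-walk coefficient $N=N(1)$. Since $C_{\varepsilon_0}$ is positive definite, the result of \cite{BSS2017} already invoked in the proof of Proposition \ref{clrimageprop} gives $\mathfrak C(f,B^2(\lambda)) = \ker N(1)^*$. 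Hence it suffices to prove $\ker N(1)^* = [\ker A(1)]^\perp$, and by \eqref{ranknullity} applied with the operator $N(1)^*$ (whose adjoint is $N(1)$) this in turn reduces to establishing $\ran N(1) = \ker A(1)$.

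The next step is therefore to compute $\ran N(1)$ from \eqref{eqresidue}. Writing $B \coloneqq (\id_{B^2(\lambda)} \ominus \mathbf{P}) A^{(1)}(1){\mid_{\ker A(1)}}$, the residue formula reads $N(1) = -\,B^{-1}\,(\id_{B^2(\lambda)} \ominus \mathbf{P})$, and Assumption $(\mathrm{N})$-$(\mathrm{iii})$ is precisely the statement that $B : \ker A(1) \to \ran(\id_{B^2(\lambda)} \ominus \mathbf{P})$ is invertible. The factor $(\id_{B^2(\lambda)} \ominus \mathbf{P})$ maps $B^2(\lambda)$ onto $\ran(\id_{B^2(\lambda)} \ominus \mathbf{P})$, and $B^{-1}$ carries that space bijectively onto $\ker A(1)$; composing, we obtain $\ran N(1) = \ker A(1)$. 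Equivalently, one may pass to the $\clr$ image and reuse the identity $\tilde{N}_{-1} = \mathbb{B}^{-1}(\id_{\mathcal H} - P_1)$ established in the proof of Proposition \ref{propi1}, noting that $\clr$ is unitary so that $\ran N(1) = \clr^{-1}(\ran \tilde{N}_{-1})$ and orthogonal complements are preserved; either route delivers the same conclusion.

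To finish, observe that $(\mathrm{N})$-$(\mathrm{i})$ makes $A(1)$ an index-zero Fredholm operator, so $\ker A(1)$ is finite-dimensional and in particular closed. Consequently $(\ran N(1))^\perp = [\ker A(1)]^\perp$, and \eqref{ranknullity} yields $\ker N(1)^* = (\ran N(1))^\perp = [\ker A(1)]^\perp$. Combining this with the reduction of the first paragraph gives $\mathfrak C(f,B^2(\lambda)) = [\ker A(1)]^\perp$, as required.

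The main obstacle is the identification $\ran N(1) = \ker A(1)$ in the second paragraph: the delicate point is to verify that the two factors compose so that the image is exactly $\ker A(1)$, and not merely contained in it, which rests on the invertibility of $B$ supplied by $(\mathrm{N})$-$(\mathrm{iii})$ together with the surjectivity of $\id_{B^2(\lambda)} \ominus \mathbf{P}$ onto its range. Because the underlying factorization and the invertibility of $B$ have already been produced in the proof of Proposition \ref{propi1}, I expect this step to be a matter of careful bookkeeping rather than of genuinely new analysis; the remaining manipulations are immediate applications of \eqref{ranknullity} and the unitarity of $\clr$.
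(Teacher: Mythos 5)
Your proposal is correct and follows essentially the same route as the paper's own (informal) proof: both reduce the claim to $\mathfrak C(f,B^2(\lambda)) = \ker N(1)^*$ via the Beveridge--Nelson decomposition and the positive definiteness of $C_{\varepsilon_0}$ (deferring rigor to Proposition 3.1 of \cite{BSS2017}), and then identify $\ker N(1)^* = [\ker A(1)]^\perp$ from the residue formula \eqref{eqresidue} together with \eqref{ranknullity}. Your write-up merely makes explicit the step the paper leaves as ``easily deduced,'' namely that $\ran N(1) = \ker A(1)$ because $(\id_{B^2(\lambda)} \ominus \mathbf{P})$ surjects onto $\ran(\id_{B^2(\lambda)} \ominus \mathbf{P})$ and $B^{-1}$ maps that space bijectively onto $\ker A(1)$.
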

	\begin{proof}
		We only provide an informal argument. For a more detailed and rigorous proof, refer to \citet[Proposition 3.1]{BSS2017}. From \eqref{bndecomp}, for any $g\in B^2(\lambda)$ we have
		\begin{align*}
		\langle f_t, g\rangle_{B^2(\lambda)}  = \langle f_0 \ominus \nu_0 , g\rangle_{B^2(\lambda)} + \langle N(1) \xi_t , g\rangle_{B^2(\lambda)} + \langle \nu_t, g\rangle_{B^2(\lambda)}.
		\end{align*}   
		By employing the initial condition $f_0 = \nu_0$, we can make the first term vanish. Moreover, the inner product process $(\langle \nu_t, g \rangle_{B^2(\lambda)}, t \geq 0)$ is stationary because $(\nu_t,t\geq0)$ is stationary. The variance of $\langle  N(1) \xi_t, g \rangle_{B^2(\lambda)}$ increases in $t$. Thus, $\langle  N(1) \xi_t, g \rangle_{B^2(\lambda)}$ must vanish in order for $\langle f_t, g\rangle_{B^2(\lambda)}$ to be stationary. Given that $\varepsilon$ has a positive definite covariance operator, for $\langle  N(1) \xi_t, g \rangle_{B^2(\lambda)}$ to vanish we require $g \in \ker  N(1)^*$. From the closed form solution for $ N(1)$ given in \eqref{eqresidue} combined with \eqref{ranknullity} , it can be easily deduced that $\ker  N(1)^* = [\ker A(1)]^\perp$. 
	\end{proof}
	\noindent Naturally, the attractor space $\mathfrak A (f,B^2(\lambda)) = \mathfrak C (f,B^2(\lambda))^\perp = \ker A(1)$, which is finite dimensional under Assumption (N). 
	
	%	\begin{remark}
	%	For simplicity, the AR law of motion we have considered in this section does not include a deterministic component.  
	%	\end{remark}
	
	\begin{example}[A numerical example] \hspace{0.1cm}
		
		\noindent Let $\lambda$ be the uniform measure on $[-3,3]$ and consider the Bayes Hilbert space associated with $\lambda$. Let $g$ be the standard normal measure truncated on $[-3,3]$. We consider the following AR(1) law of motion
		\begin{align*}
		&(f_t \ominus g) = \Psi (f_{t-1} \ominus g) \oplus \varepsilon_t, 
		\\ & \Psi(\cdot) = \bigoplus_{j=1}^\infty \lambda_j \langle \cdot , e_j \rangle_{B^2(\lambda)} \odot e_j,
		\end{align*}
		where $\lambda_j = 2^{1-j}$. Let $(u_j, j \in \mathbb{N})$ be the Fourier basis for $\overline{L^2}(\lambda)$, and let $e_1$ be the Cauchy probability density with location zero and scale 0.25. We obtain an orthonormal basis $(e_j, j \in \mathbb{N})$ for $B^2(\lambda)$ by applying the Gram-Schmidt process to the basis $( e_1,\clr^{-1}u_2, \clr^{-1}u_3, \ldots ) $. If $\varepsilon_t$ has a positive definite covariance operator, this specification makes the attractor space equal to 
		\begin{align} \label{nuexatt}
		\mathfrak A(f,B^2(\lambda)) = \{ g \oplus (a \odot e_1) : a \in \mathbb{R} \}.
		\end{align} Figure \ref{fig1} shows typical elements in the attractor space. An element $g \oplus (a \odot e_1)$ shows higher concentration around $0$ when $a > 0$, or is bimodal when $a < 0$. %It is expected that $f_t$ would take the shape of either one in the long run. 
		To obtain a simulated sequence, I generated a sequence of independent standard Brownian bridges $(B_t,t\geq1)$, and set
		\begin{align*}
		\varepsilon_t  = \clr^{-1} \{0.3 \cdot \, \mathbf{P}_m(\lambda) \, \overline{B}_t \},
		\end{align*}
		where $ \mathbf{P}_m(\lambda)$ denotes orthogonal projection on the space of $m$th-order polynomials in $\overline{L^2}(\lambda)$, and $\overline{B}_t(u) = B_t((u+3)/6) - 6 \int_{0}^1 B_t(r) \mathrm{d}r$ for $u\in[-3,3]$. A consequence of using the projection $\mathbf P_m$ to construct the innovation $\varepsilon_t$ is that the attractor space may not be exactly equal to that in \eqref{nuexatt}, since the covariance operator of $\varepsilon_t$ is not positive definite. However, this is a convenient way to force the simulated probability densities to be smooth, and by choosing large $m$ the attractor space should approximate that in \eqref{nuexatt}. I set $f_1 = g$ and generated a sequence which is shown in Figure \ref{fig2}. We can easily see that $f_t$ floats around the attractor space in the long run.
	
			\begin{figure}[h!] 
				\centering
			\caption{Typical elements in the attractor} 	\label{fig1} 
			\begin{tabular}{cc}
				\scriptsize{\quad$g \oplus e_1$} & \scriptsize{\quad$g \ominus e_1$}\\
				\includegraphics[width=0.4\textwidth]{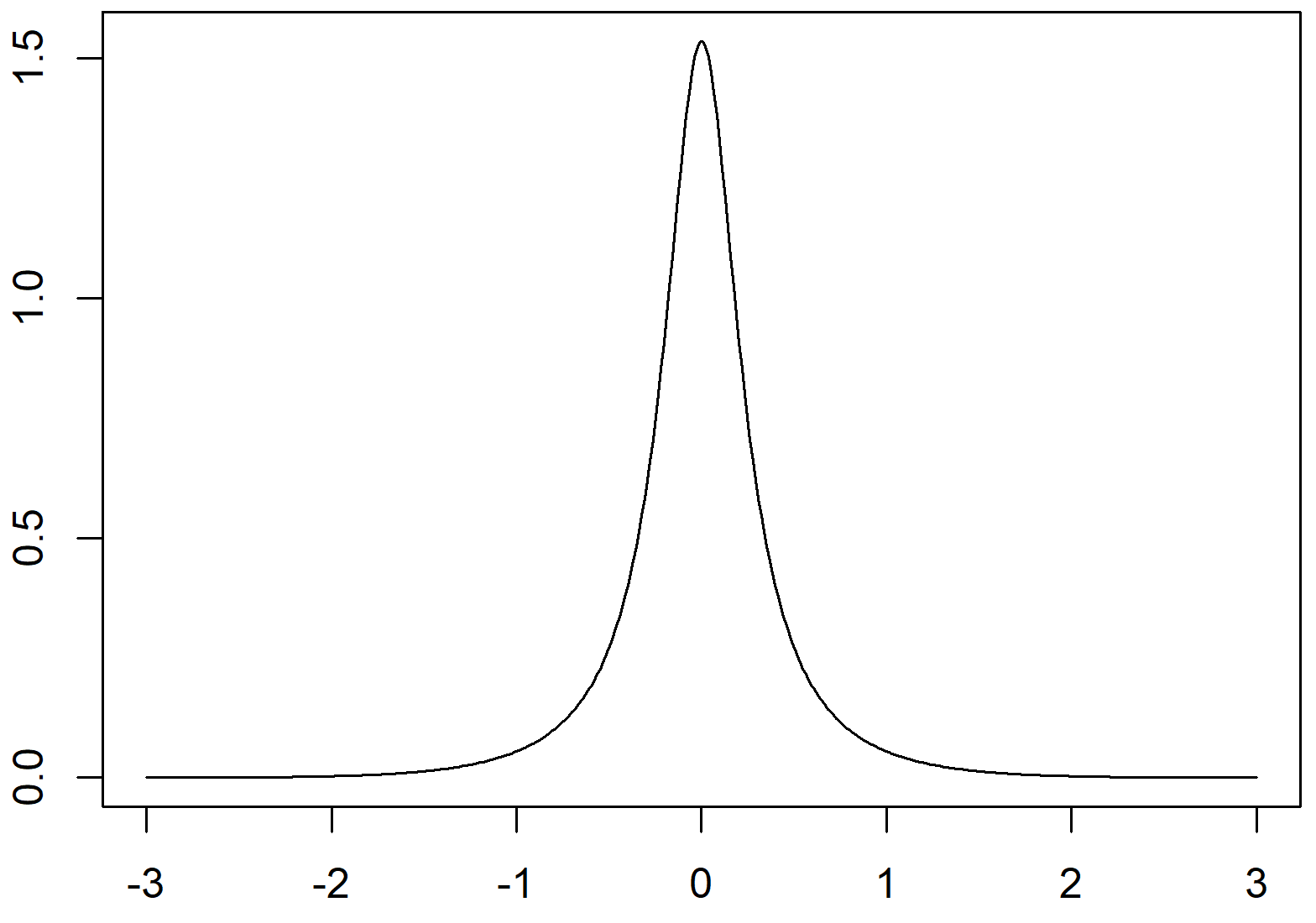} & 	\includegraphics[width=0.4\textwidth]{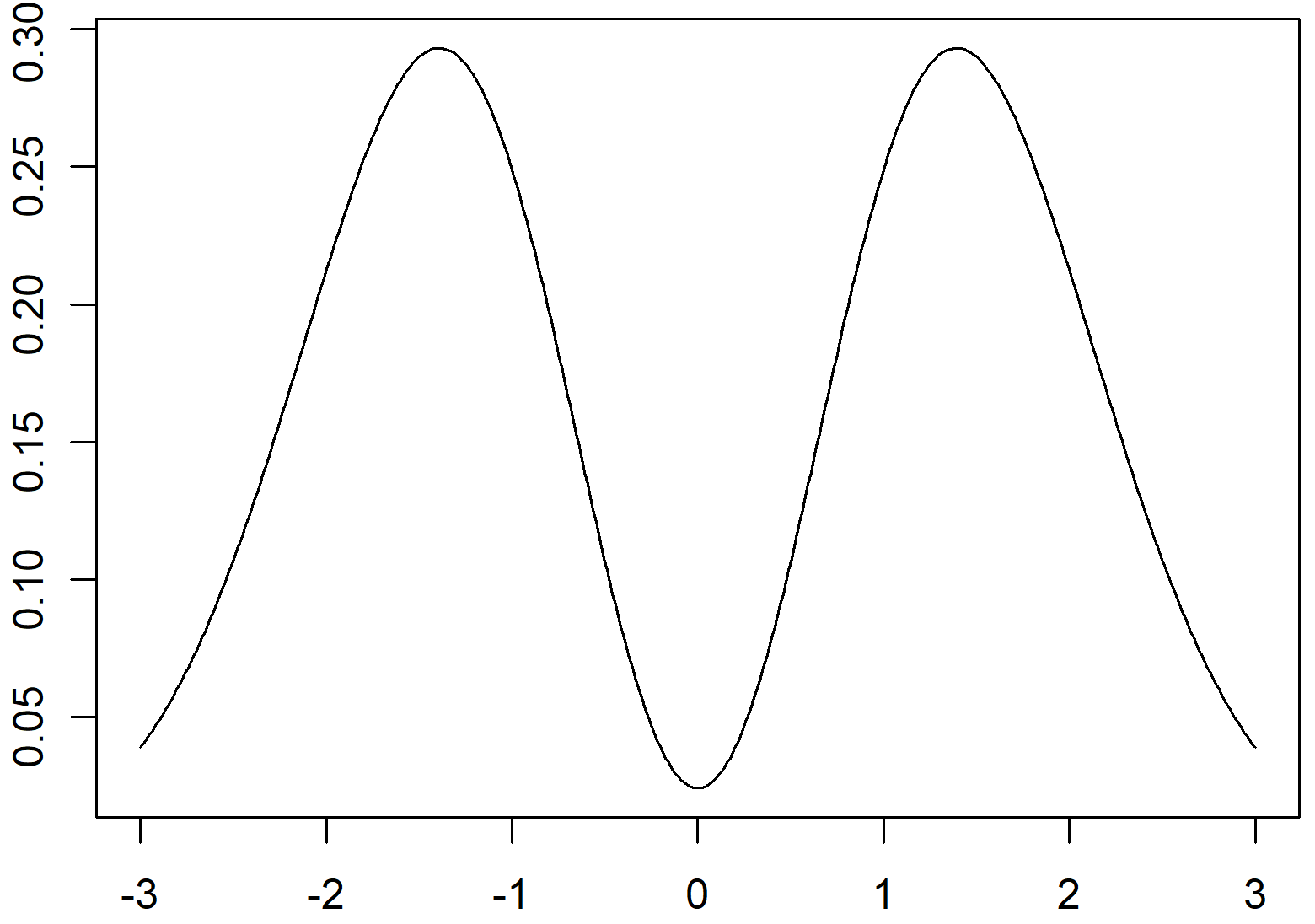}	
			\end{tabular}
		\end{figure}
		\begin{figure}[h!]
			\centering 
			\caption{Realization of $(f_t, t\geq 1)$}	\label{fig2}
			\begin{tabular}{cccc}
				\scriptsize{$t=1$} & \scriptsize{$t=10$}& \scriptsize{$t=30$}& \scriptsize{$t=100$}\\
			\includegraphics[width=0.22\textwidth]{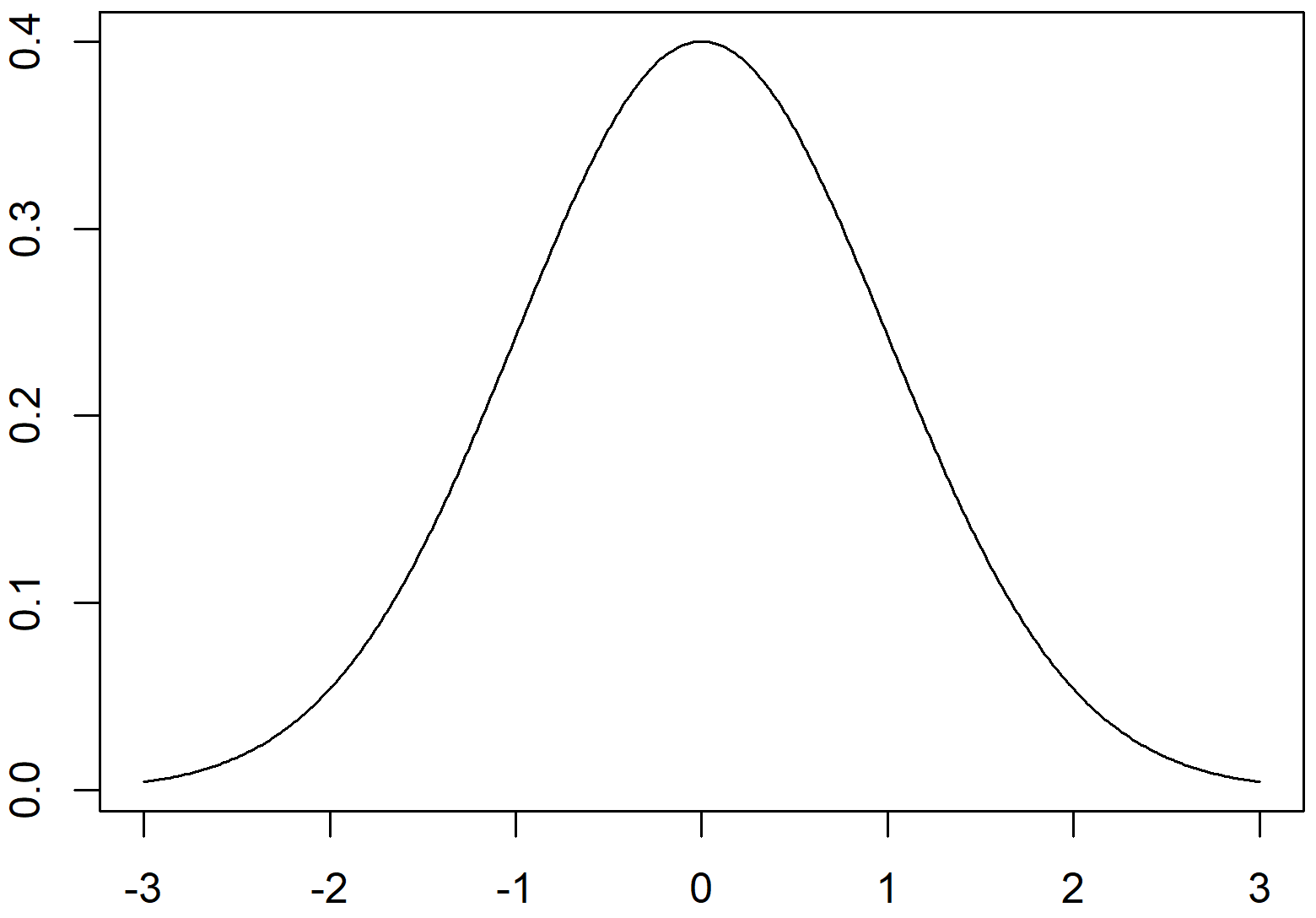}&	\includegraphics[width=0.22\textwidth]{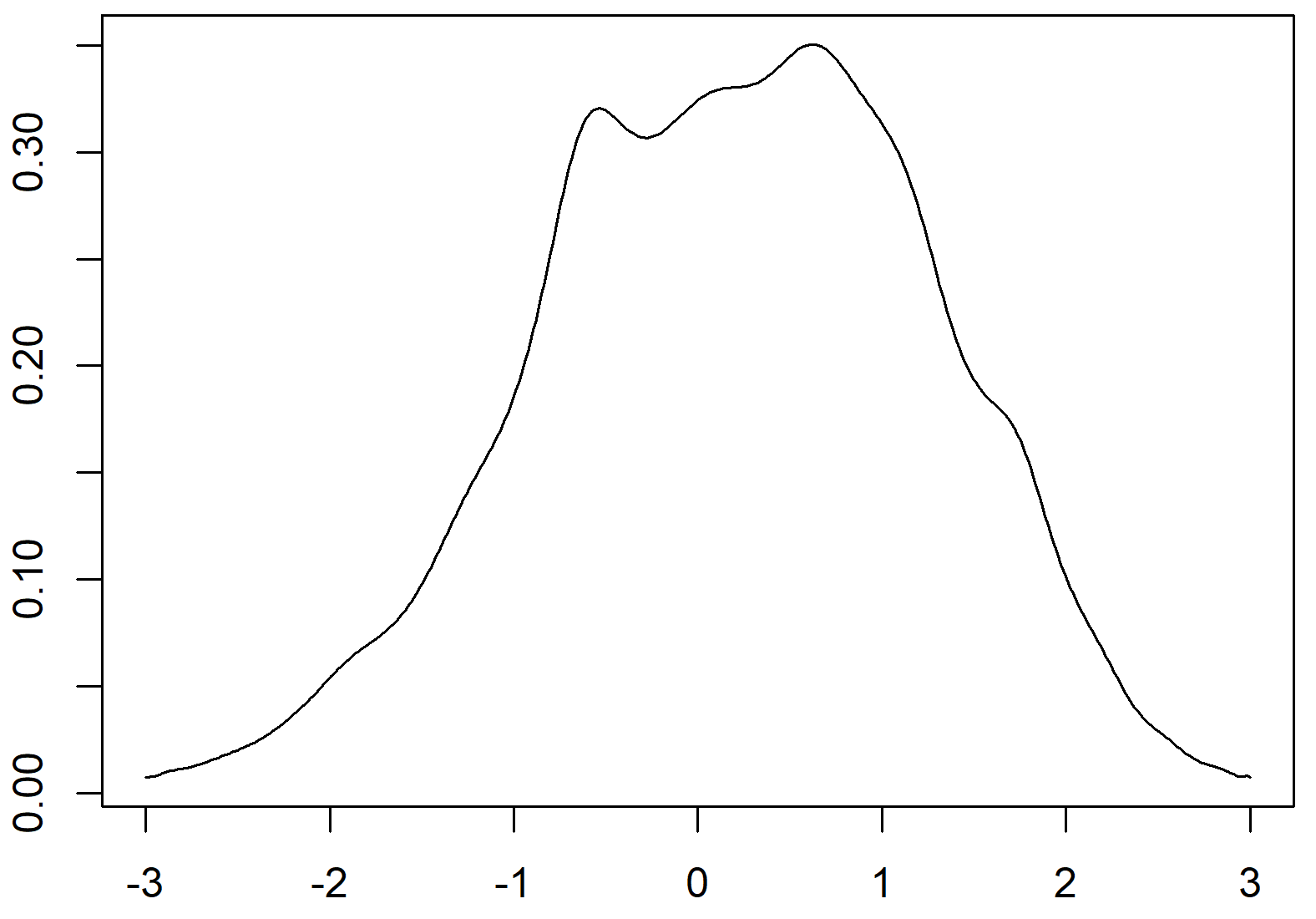}&\includegraphics[width=0.22\textwidth]{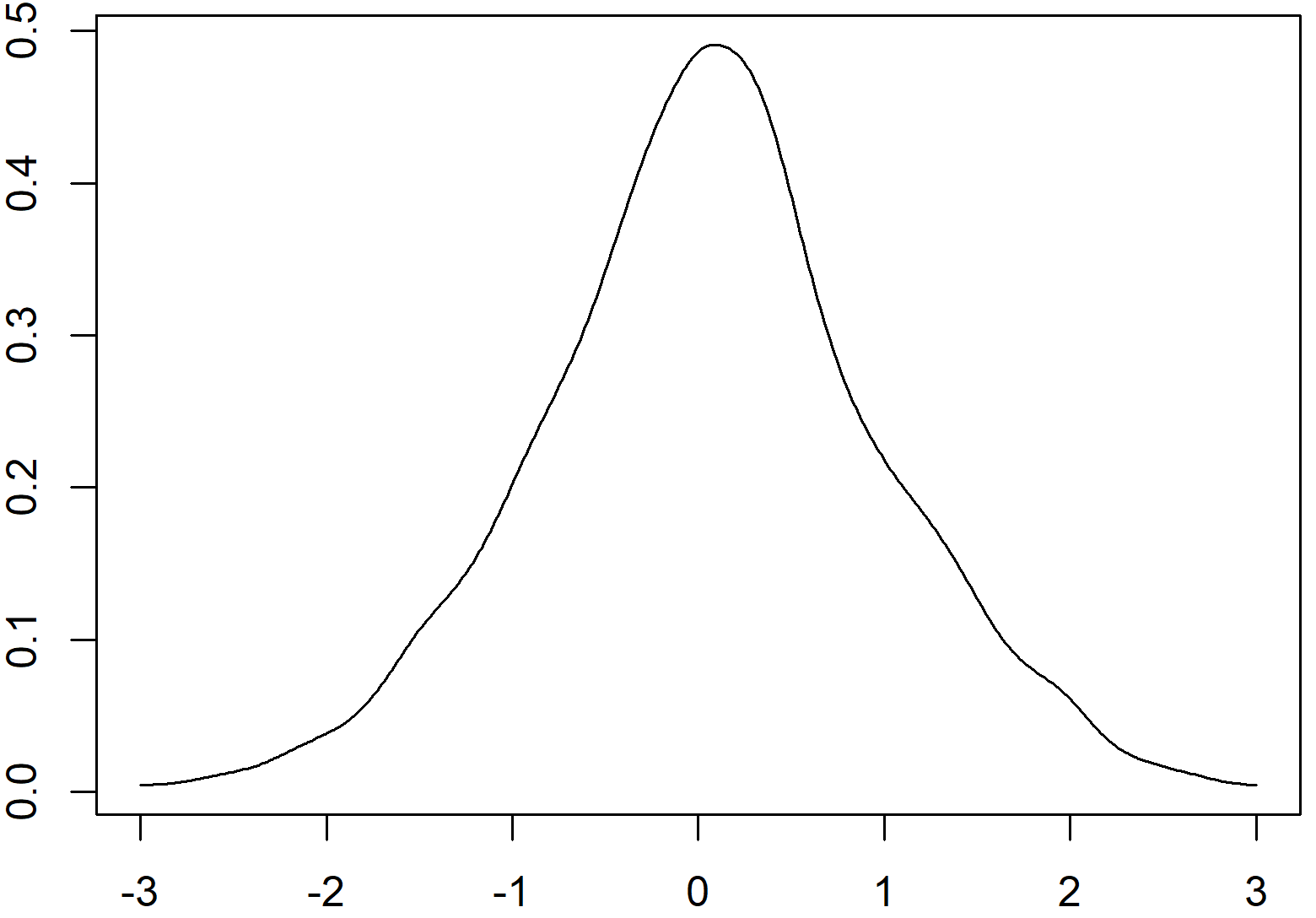}&\includegraphics[width=0.22\textwidth]{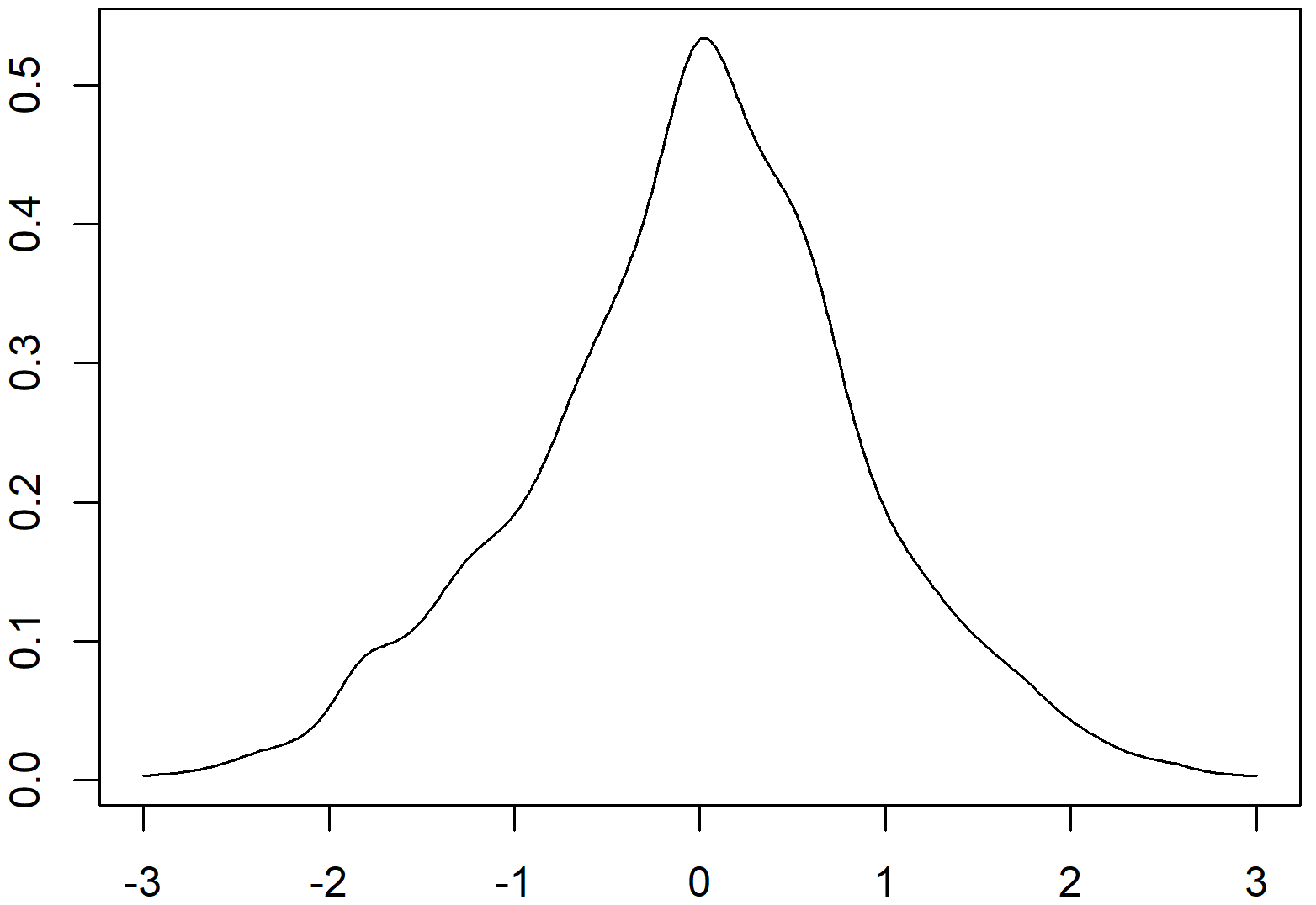}\\	
			\scriptsize{$t=300$} & \scriptsize{$t=500$}& \scriptsize{$t=800$}& \scriptsize{$t=1000$}\\
		\includegraphics[width=0.22\textwidth]{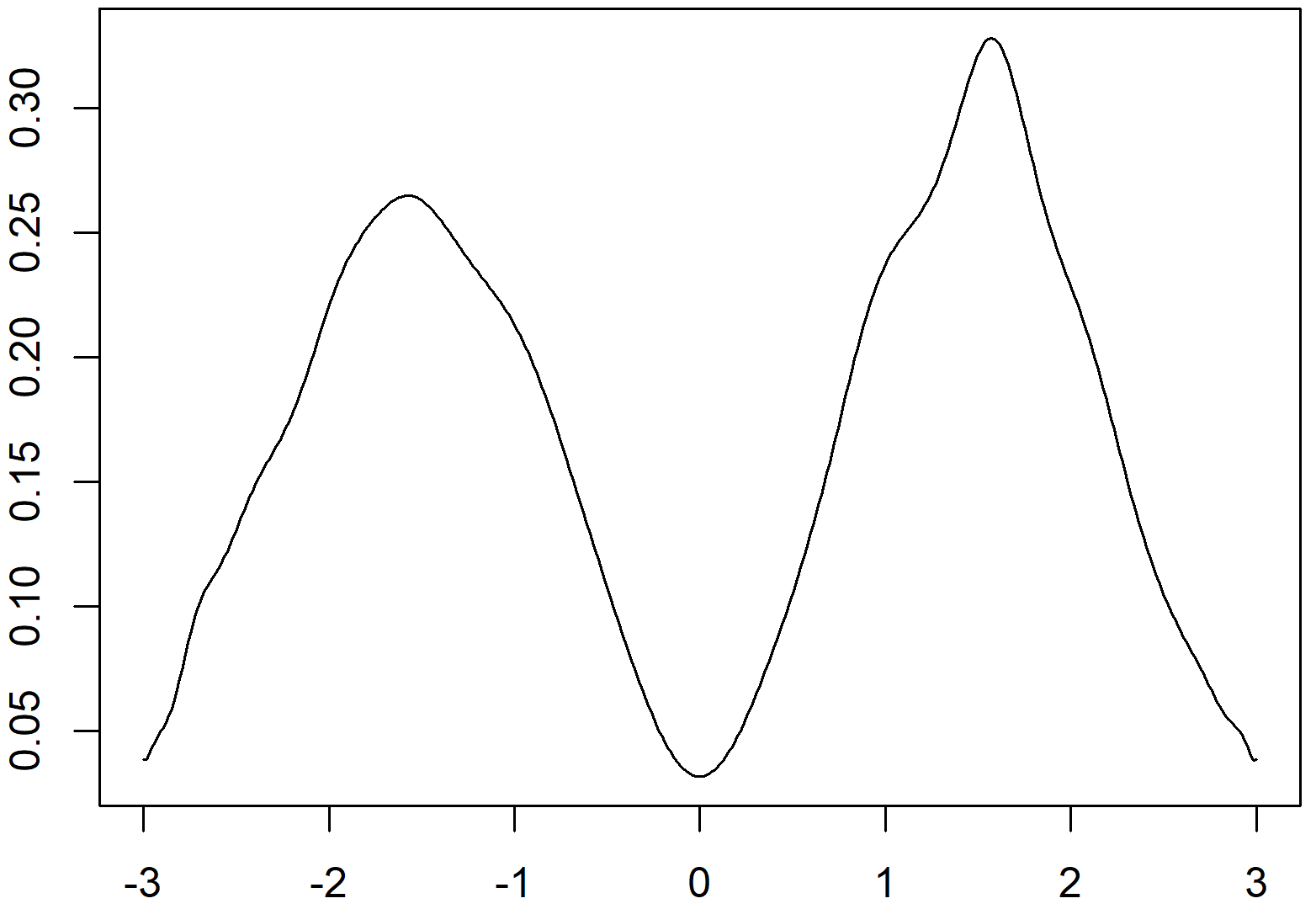}&	\includegraphics[width=0.22\textwidth]{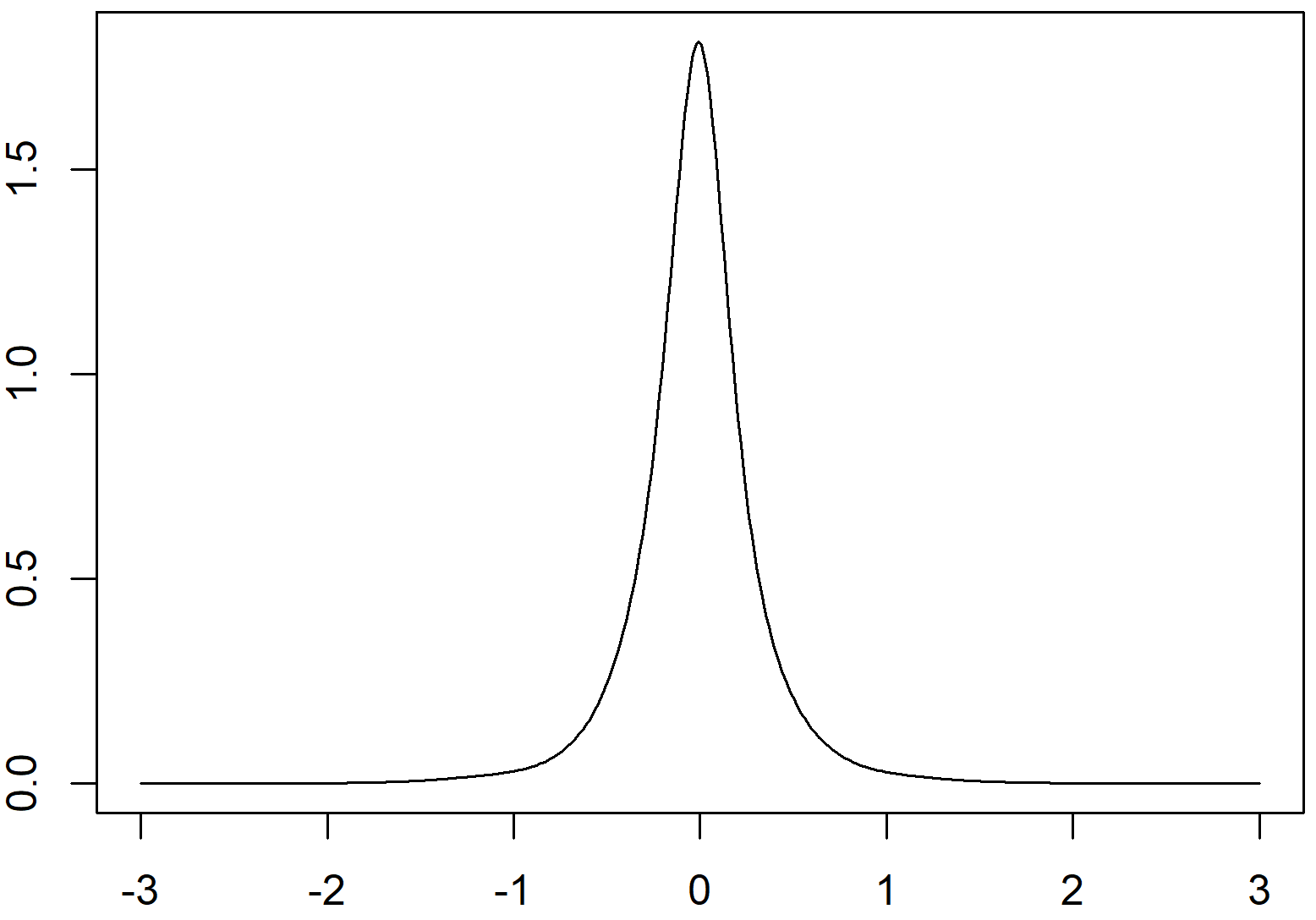}&\includegraphics[width=0.22\textwidth]{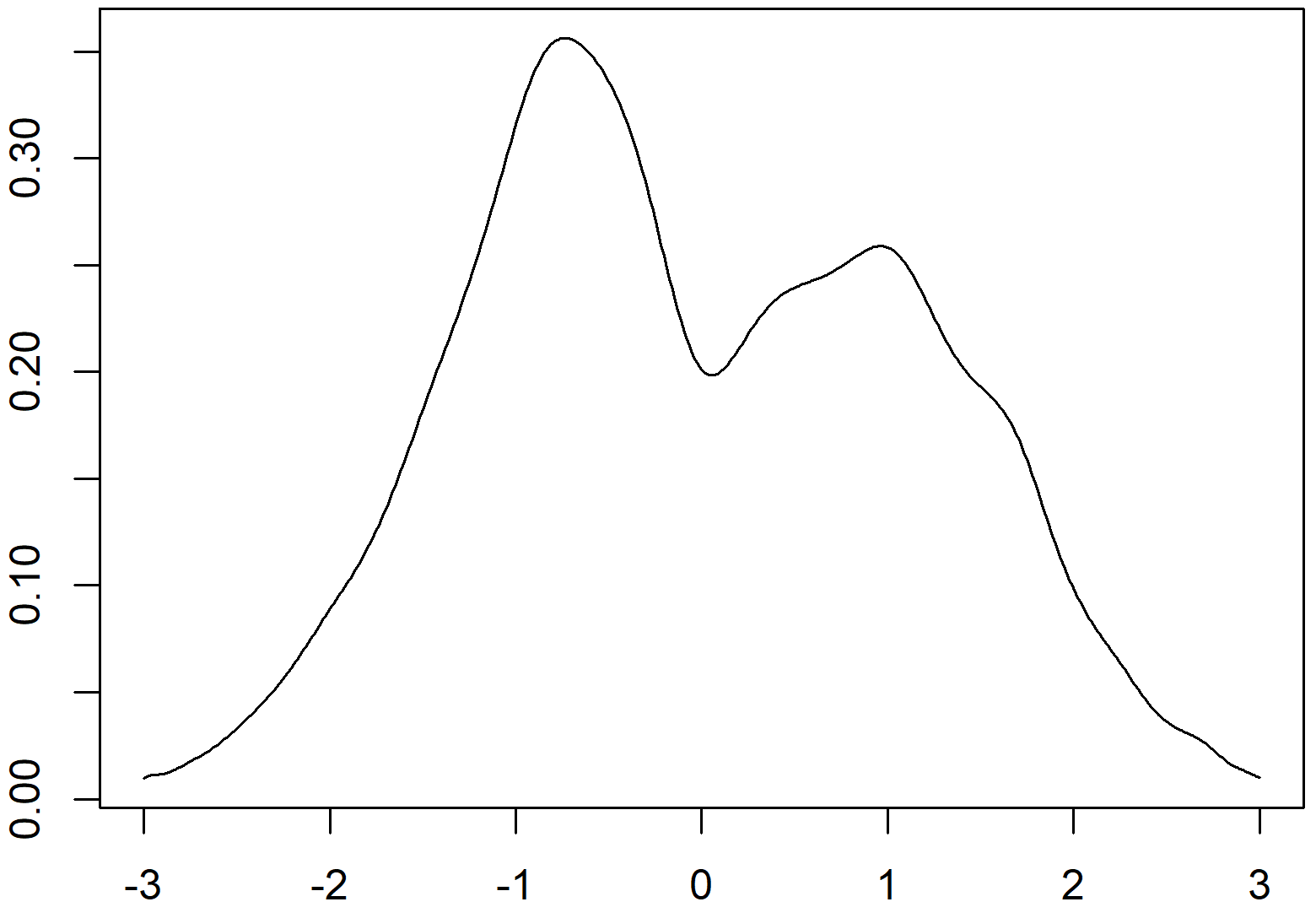}&\includegraphics[width=0.22\textwidth]{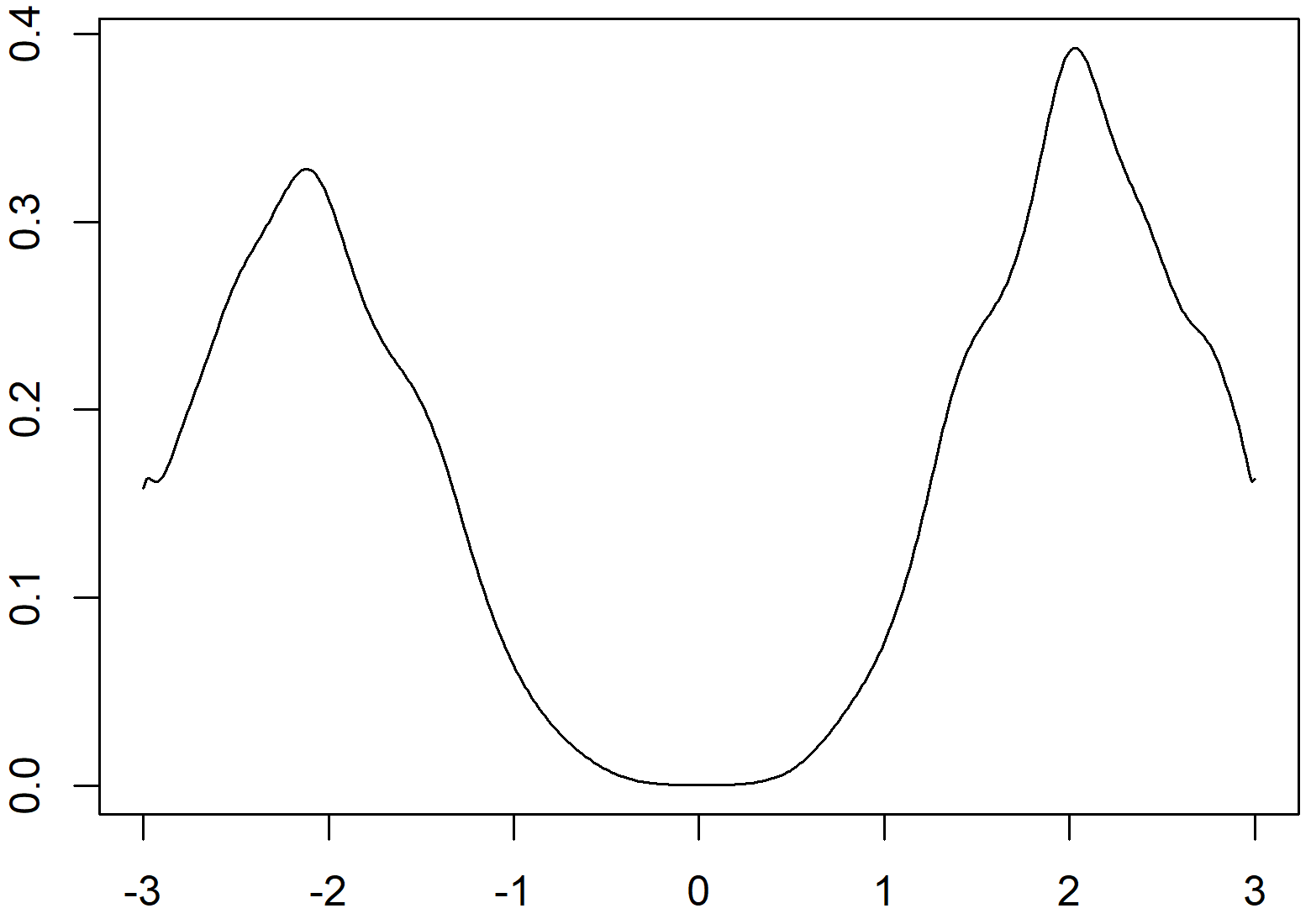}			
		\end{tabular}
		\end{figure}
	\end{example}
	
	\section{Statistical inference} \label{stat}
	In this section we provide a statistical procedure to estimate the attractor space, which is assumed here to be finite dimensional. 	\cite{Chang2016152} provided statistical methods based on FPCA for a  cointegrated density-valued linear process with values in $L^2(\lambda)$. Even though \cite{Beare2017} commented that their time series cannot accommodate a nontrivial attractor space, the statistical procedure they provided is useful since we can apply it to the time series of $\clr$-images $\tilde{f}=(\tilde{f}_t, t \geq 1)$ with values in $\overline{L^2}(\lambda)$. Using their procedure, the attractor space of $\overline{L^2}(\lambda)$ can be estimated, and then it is easy to obtain the attractor space of $B^2(\lambda)$ using the $\clr^{-1}$ transformation. We therefore do not have to develop a new statistical technique adapted to Bayes Hilbert spaces.
	
%	It is clear that we need only focus on $\overline{L^2}(\lambda)$. I assume that	
	\begin{assumption*}[T1] \hspace{0.1cm}
		\begin{itemize}
			\item[(i)] For some finite rank $\tilde{N} \in \mathfrak B(\overline{L^2}(\lambda))$ and a sequence $(\tilde{N}_k, k \geq 0)$  in  $\mathfrak B(\overline{L^2}(\lambda))$ such that $\sum_{k=0}^\infty k \|\tilde{N}_k\|_{\overline{L^2}(\lambda)} < \infty$, we have \begin{align*}
			\tilde{f}_t = \tilde{f}_0 - \tilde{\nu}_0 + \tilde{N} \tilde{\xi}_t +  \tilde{\nu}_t, \quad t \geq 0 .	\end{align*}
			where $\tilde{\nu}_t = \sum_{k=0}^\infty \tilde{N}_k \tilde{\varepsilon}_{t-k}$.
			\item[(ii)] $(\tilde{\varepsilon}_t, t\in \mathbb{Z})$ is an iid sequence with $E \tilde{\varepsilon}_t = 0$, positive definite covariance operator $C_{\tilde{\varepsilon}_0}$, and $E\|\tilde{\varepsilon}_t\|_{\overline{L^2}(\lambda)}^p < \infty$ for some $p\geq 4$.
		\end{itemize}
	\end{assumption*}
	\noindent In the above assumption, it is explicitly required that the attractor space $\mathfrak A(\tilde{f},\overline{L^2}(\lambda))$ is finite dimensional. An example of such an I(1) process is an AR($p$) process satisfying Assumption (N), which we considered in the previous section. Assume that $\dim(\mathfrak A(\tilde{f},\overline{L^2}(\lambda))) = r > 0$  and let $(v_i, i \in \mathbb{N})$ be the orthonormal basis of $\overline{L^2}(\lambda)$ such that 
	\begin{align*}
	&\mathfrak A(\tilde{f},\overline{L^2}(\lambda)) = \spn (v_1, \ldots, v_r), \\
	&\mathfrak  C(\tilde{f},\overline{L^2}(\lambda)) = \spn (v_{r+1}, v_{r+2},  \ldots).
	\end{align*}  
	
	The probability density functions $(f_t, t=1,\ldots,T)$ are not directly observed and must be estimated from the data. We assume that there are $n$ cross-sectional observations that are available to estimate $f_t$ at each time period $t$. One possibility would be to obtain estimated densities $(\hat{f}_1,\ldots,\hat{f}_T)$ from a standard nonparametric kernel method. However, since we would be applying the procedure of \cite{Chang2016152} to the $\clr$-images $\log \hat{f}_t - \int \log \hat{f}_t d \lambda$, this naive approach could lead to unacceptable bias if the densities are close to zero at the boundary of their support. Possibly, a better approach is to estimate the log-densities directly. Methods for doing so include the maximum penalized likelihood method of \cite{silverman1982}, the spline-based method of \cite{OSullivan1988} and the local likelihood method of \cite{loader1996}. However the clr-images are estimated, we assume that they satisfy the following high level condition.
	
	\begin{assumption*}[T2] \hspace{0.1cm}
		The estimated clr-images $(\tilde{f}^{\mathcal E}_t, t=1,\ldots,T)$ satisfy
		\begin{itemize}
			\item[(i)] $\sup_{1 \leq t \leq T} \|\tilde{f}^{\mathcal E}_t - \tilde{f}_t\|_{\overline{L^2}(\lambda)} = O_p(1)$,
			\item[(ii)] $T^{-1} \sum_{t=1}^T \| \tilde{f}^{\mathcal E}_t - \tilde{f}_t \|_{\overline{L^2}(\lambda)} \to_p 0$. 
		\end{itemize}	
	\end{assumption*} 

	The testing procedure is based on FPCA of the empirical covariance operator
	\begin{align*}
	\tilde{V}^T =  T^{-1}\sum_{t=1}^T \left(\tilde{f}^\mathcal{E}_t - T^{-1} \sum_{t=1}^T \tilde{f}^\mathcal{E}_t  \right) \otimes \left(\tilde{f}^\mathcal{E}_t - T^{-1} \sum_{t=1}^T \tilde{f}^\mathcal{E}_t  \right).
	\end{align*}
	Let $(\zeta_i^T, v_i^T)$, $i\in\mathbb N$, be the eigenpairs of the empirical covariance operator $\tilde{V}^T$, where $\zeta_i^T$ is assumed to be decreasing as $i$ gets larger.	Let $\mathbf{P}_{A}$ be the orthogonal projection on $\ran \tilde{N}$ and define $\mathbf{P}_{C} = \id_{\overline{L^2}(\lambda)} - \mathbf{P}_A$. $\tilde{V}^T$ can be written as
	\begin{align*}
	\tilde{V}^T = \tilde{V}_{AA}^T + \tilde{V}_{AC}^T +  \tilde{V}_{CA}^T +  \tilde{V}_{CC}^T,
	\end{align*}
	where $\tilde{V}_{AA}^T = \mathbf{P}_A \tilde{V}^T \mathbf{P}_A $, $\tilde{V}_{AC}^T = \mathbf{P}_A \tilde{V}^T \mathbf{P}_C $ , $\tilde{V}_{CA}^T = \mathbf{P}_C \tilde{V}^T \mathbf{P}_A, $ and $\tilde{V}_{CC}^T = \mathbf{P}_C \tilde{V}^T \mathbf{P}_C $.
	Under Assumptions (T1) and (T2), it can be shown that 
	\begin{align*}
	\| T^{-1} \tilde{V}^T - T^{-1}\tilde{V}_{AA}^T \|_{\mathfrak B(\overline{L^2}(\lambda))} = O_p(T^{-1}).
	\end{align*}
	That is, the difference between $T^{-1} \tilde{V}^T$ and $T^{-1} \tilde{V}_{AA}^T $ is a norm-vanishing operator. 
	Noting that the eigenfunctions of $\tilde{V}_{AA}^T$ are in $\mathfrak A(\tilde{f},\overline{L^2}(\lambda))$, it follows that the eigenfunctions associated with the $r$ leading eigenvalues are a consistent estimator of a spanning set of $\mathfrak A(\tilde{f},\overline{L^2}(\lambda))$. Formally, it can be shown \citep[Proposition 3.2 and Section 4]{Chang2016152} that
	\begin{align} \label{pconver}
	\| \hat{\mathbf{P}}_A - \mathbf{P}_A \|_{\overline{L^2}(\lambda)} = O_p(T^{-1}),
	\end{align}  
	where $\hat{\mathbf{P}}_A$ is the projection operator based on the eigenfunctions. An alternative formulation of \eqref{pconver} is
	\begin{align}
	\spn(v_1^T, \ldots, v_r^T) \to_p \mathfrak A(\tilde{f},\overline{L^2}(\lambda)).
	\end{align}
	Needless to say, $ \mathfrak C(\tilde{f},\overline{L^2}(\lambda))$ can be estimated as the orthogonal complement, $[\spn(v_1^T, \ldots, v_r^T)]^\perp$.  
	These results show that if $r$ is known, then it is easy to estimate the attractor space, $\mathfrak A(\tilde{f},\overline{L^2}(\lambda))$, since the $r$ leading eigenfunctions of $\tilde{V}^T$ asymptotically span $\mathfrak A(\tilde{f},\overline{L^2}(\lambda))$. 
	
	In practice, the dimension of $\mathfrak A(\tilde{f},\overline{L^2}(\lambda))$ is typically unknown. To determine the dimension, we may consider testing the null hypothesis
	\begin{align}
	&H_0 : \dim\left(\mathfrak A(\tilde{f},\overline{L^2}(\lambda))\right) = R \label{null},
	\end{align}
	{against the alternative}
	\begin{align}
	&H_1 : \dim\left(\mathfrak A(\tilde{f},\overline{L^2}(\lambda))\right) \leq R-1. \label{alter}
	\end{align}
	For some positive integer $R_{\max} \geq 1$,  successive tests of the null hypothesis for $R={R}_{\max}, {R}_{\max}-1, \ldots, 1$ can determine the dimension of $\mathfrak A(\tilde{f},\overline{L^2}(\lambda))$. For example, if the null is rejected in favor of the alternative for $R>r$, but not rejected for $R=r$ , we can conclude that $\dim(\mathfrak A\left(\tilde{f},\overline{L^2}(\lambda))\right)  = r$. A feasible test statistic and its limiting distribution under the null hypothesis are given by \cite{Chang2016152}. Let 
	\begin{align*}
	\hat{z}_t = \left( \left\langle v_1^T, \tilde{f}_t^{\mathcal E} - T^{-1} \sum_{t=1}^T \tilde{f}^\mathcal{E}_t  \right\rangle_{\mathcal H}, \ldots, \left\langle v_R^T, \tilde{f}_t^{\mathcal E} - T^{-1} \sum_{t=1}^T \tilde{f}^\mathcal{E}_t  \right\rangle_{\mathcal H} \right)'
	\end{align*}
	for $t=1,\ldots,T$, and define $\hat{Z}_T = (\hat{z}_1,\ldots, \hat{z}_T)'$. Further let  $\hat{Q}_R^T = \hat{Z}_T' \hat{Z}_T$ and $\hat{\Sigma}_R^T = \sum_{|k| \leq \ell} w_\ell (k) \hat \Gamma(k)$, where $\hat \Gamma(k)$ is the sample autocovariance of $\hat{z}_t$ and $w_{\ell}(k)$ is a bounded weight function\footnote{See \cite{Chang2016152} for further details. The truncation parameter $\ell$ may be chosen as in \cite{Andrews1991}. In the empirical applications reported in Section \ref{empirical} of this paper, $w_\ell(\cdot)$ is chosen to be the quadratic spectral kernel.}. The proposed test statistic is 
	\begin{align*}
	\tau_R^T = T^{-2} \zeta_{\min} (\hat{Q}_R^T, \hat{\Sigma}_R^T)
	\end{align*}
	where $\zeta_{\min} (\hat{Q}_R^T, \hat{\Sigma}_R^T)$ denotes the smallest generalized eigenvalue of $\hat{Q}_R^T$ with respect to $ \hat{\Sigma}_R^T$.
	%\footnote{ Let $\zeta_i \in \mathbb{C}$ and $\nu_i \in \mathcal H$ satisfy the following equation 
	%	\begin{align}
	%	A \nu_i = \zeta_i B \nu_i
	%	\end{align} Then $\zeta_i$ is called a generalized eigenvalue, and $\nu_i$ is called a generalized eigenfunctions.}
	% i.e. $\zeta_{min} (\hat{V}_R^T, \hat{\Sigma}_R^T) = \min_{j} \{\zeta_j : \hat{V}_R^T v_j = \zeta_j \hat{\Sigma}_R^T v_j, \text{for some $v_j$ }\}$ 
	\begin{proposition}[Theorem 4.3 in \citealp{Chang2016152}]
		If Assumptions  $(\mathrm{T1})$ and $(\mathrm{T2})$ are satisfied then, under the null hypothesis $H_0$ in \eqref{null}, we have
		\begin{align}\label{limdist}
		\tau_R^T \to_d \zeta_{\min} \left(\int \overline{W}_R (r) \overline{W}_R(r)' \mathrm{d}r , I_R\right),
		\end{align}
		where $I$ is the $R\times R$ identity matrix, $\overline{W}_R(r) = W_R(r) - \int W_R(s) \mathrm{d}s$, and $W_R$ is an $R\times1$ vector of independent standard Brownian motions. On the other hand, under the alternative hypothesis $H_1$ in \eqref{alter},  
		\begin{align*}
		\tau_R^T \to_p 0.
		\end{align*}
	\end{proposition}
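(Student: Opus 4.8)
The plan is to recognize that, once we project onto the estimated attractor basis, the problem collapses to the finite-dimensional unit-root asymptotics underlying the test of \cite{Chang2016152}. So the bulk of the argument is to verify that Assumptions (T1)--(T2) deliver exactly the ingredients their Theorem 4.3 requires, and then to transport the conclusion through the projection machinery. First I would use the Beveridge--Nelson form in (T1)(i) to isolate the I(1) component $\tilde{N}\tilde{\xi}_t$, which lives in $\ran\tilde{N} = \mathfrak A(\tilde{f},\overline{L^2}(\lambda))$, an $R$-dimensional subspace under $H_0$. The consistency statement \eqref{pconver}, already established, guarantees that the leading $R$ empirical eigenfunctions $v_1^T,\ldots,v_R^T$ span the attractor space up to $O_p(T^{-1})$. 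Consequently the coordinate vector $\hat{z}_t$ is, up to asymptotically negligible terms, the coordinate representation of $\tilde{N}\tilde{\xi}_t$ in an orthonormal basis of $\mathfrak A(\tilde{f},\overline{L^2}(\lambda))$, i.e.\ a genuine $R$-dimensional I(1) vector.

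Next I would invoke the functional central limit theorem for the partial sums of $(\tilde{\varepsilon}_t)$, valid under the iid and moment conditions in (T1)(ii): the rescaled process $T^{-1/2}\hat{z}_{[Tr]}$ converges weakly to $\Omega^{1/2} W_R(r)$, where $\Omega$ is the long-run covariance of the projected innovations. The demeaning built into $\hat{z}_t$ replaces $W_R$ by $\overline{W}_R$. The continuous mapping theorem then yields $T^{-2}\hat{Q}_R^T \to_d \Omega^{1/2}\bigl(\int \overline{W}_R \overline{W}_R'\,\mathrm{d}r\bigr)\Omega^{1/2}$. In parallel, the kernel estimator satisfies $\hat{\Sigma}_R^T \to_p \Omega$; because the smallest generalized eigenvalue $\zeta_{\min}(\cdot,\cdot)$ is invariant under simultaneous congruence by $\Omega^{1/2}$, the nuisance covariance cancels and the pivotal limit $\zeta_{\min}\bigl(\int \overline{W}_R\overline{W}_R'\,\mathrm{d}r,\,I_R\bigr)$ emerges.

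The step I expect to be most delicate is controlling the measurement error through (T2): the statistic is built from $\tilde{f}^{\mathcal E}_t$ rather than $\tilde{f}_t$, so I must show that this substitution perturbs neither the projected partial-sum limit nor the consistency of $\hat{\Sigma}_R^T$. The uniform bound in (T2)(i) keeps the error $O_p(1)$ at every $t$, which after the $T^{-2}$ scaling of $\hat{Q}_R^T$ is negligible; the averaged bound in (T2)(ii) handles the autocovariance terms entering $\hat{\Sigma}_R^T$. This interacts with the eigenfunction consistency, since both $\hat{z}_t$ and $v_i^T$ depend on the same noisy inputs, so the negligibility must be established jointly rather than term by term.

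Finally, under $H_1$ at least one retained direction projects onto a stationary (rather than random-walk) component, so the corresponding diagonal entry of $\hat{Q}_R^T$ is only $O_p(T)$ instead of $O_p(T^2)$; the smallest generalized eigenvalue is then $O_p(T^{-1})$ after the $T^{-2}$ scaling, giving $\tau_R^T \to_p 0$. Since all of this machinery is already assembled in \cite{Chang2016152} for processes in $\overline{L^2}(\lambda)$, and the clr-images $\tilde{f}$ take values precisely in $\overline{L^2}(\lambda)$, the formal proof reduces to checking that (T1)--(T2) imply the hypotheses of their Theorem 4.3 and then citing it directly.
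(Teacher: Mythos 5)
Your proposal is correct and ends up exactly where the paper does: the paper gives no proof of this proposition at all, simply importing it as Theorem 4.3 of \cite{Chang2016152} applied to the $\clr$-images in $\overline{L^2}(\lambda)$ under the high-level conditions (T1)--(T2). Your sketch of the internal mechanics (projection onto the $R$-dimensional attractor, FCLT and continuous mapping, cancellation of the long-run covariance in the generalized eigenvalue, negligibility of the estimation error under (T2), and the $O_p(T^{-1})$ collapse under $H_1$) is a faithful reconstruction of that cited argument, and your closing reduction to ``verify the hypotheses and cite the theorem'' is precisely the paper's approach.
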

	\noindent That is, $\tau_R^T$ converges in law to the smallest eigenvalue of $\int \overline{W}_R (r) \overline{W}_R(r)'\mathrm{d}r$ under the null hypothesis, and vanishes under the alternative. The critical values can be easily obtained from a large number of simulated sample paths of $\overline{W}_R$. The critical values are presented in Table \ref{tab1}.\footnote{The reported critical values for $R=1,\ldots,5$ are taken from Table 1 in \cite{Chang2016152}. The values for $R=6$ and $7$ are newly calculated for reference in the next section.} 	
	\begin{table}[h!]
		\centering
		\caption{Critical values of $\tau_R^T$}
		\label{tab1}
		\begin{tabular}{cccccccc}
			\hline
			R    & 1      & 2      & 3      & 4      & 5      & 6      & 7      \\\hline
			1\%  & 0.0248 & 0.0163 & 0.0123 & 0.0100 & 0.0084 & 0.0073 & 0.0065 \\\hline
			5\%  & 0.0365 & 0.0215 & 0.0156 & 0.0122 & 0.0101 & 0.0086 & 0.0075 \\\hline
			10\% & 0.0459 & 0.0254 & 0.0177 & 0.0136 & 0.0111 & 0.0094 & 0.0081
			\\\hline
		\end{tabular}\\
	\end{table}	
	
	\begin{remark}
		The case when $\tilde{f}_t$ is observable at each $t$ may be easily dealt with. Test statistics can be calculated based on true observations $\tilde{f} = (\tilde{f}_t, t=1,\ldots,T)$. The limit distribution is slightly different from \eqref{limdist}; it is given by 
		\begin{align*}
		\zeta_{\min} \left(\int W_R (r) W_R(r)' dr , I_R\right).
		\end{align*} See \cite{Chang2016152} for more details.  
	\end{remark}
	
	\section{Empirical application} \label{empirical}  
	\subsection{Example 1: Cross-sectional densities of earnings} 
	In this section, we revisit the application of FPCA to cross-sectional densities of individual weekly earnings undertaken by \cite{Chang2016152}. The cross-sectional observations that are used to estimate densities are obtained at monthly frequency from the Current Population Survey (CPS) database, running from January 1990 to March 2017 (327 months in total). Since individual earnings in each month are reported in current dollars, I adjusted them all to January 1990 prices.\footnote{Monthly CPI data are obtained from the Federal Reserve Economic Data (FRED).} I excluded earnings below the 2.5th percentile and above the 97.5th percentile since there are many near-zero\footnote{The dataset contains a lot of abnormal values of nominal earnings near zero. For example, there are total 9188 observations corresponding to weekly earnings less than $\$0.01$ over the whole span.} earnings and top-coded earnings in the raw dataset.\footnote{This symmetrical trimming was considered in, for example, \cite{autor2008} for analysis of the CPS wage data.} Weekly earnings are censored from above, with the threshold for censoring changing partway through the sample: the top-coded nominal earning is \$1923 before January 1998, and \$2885 afterward. This difference introduces significant heterogeneity; for example, the support of the earnings distribution changes greatly after January 1998 since the number of observations contained in $(1923, 2885)$ is always zero before this month, but nonzero afterward. However, after dropping the top 2.5\% of observations, the range of earnings is quite stable over time. Moreover, after dropping the bottom 2.5\% of observations, all near-zero earnings are excluded and the smallest earnings become reasonably sized. This exclusion of top and bottom 2.5\% of observations would also enhance the accuracy of the log-density estimation, which could have been significantly reduced by the scarcity of observations at the boundaries. After applying this truncation, the number of observations for each month ranges from 11760 to 15489. The last thing we need to notice is that the CPS individual earnings data are collected from a monthly survey of individuals, with each individual assigned their own design weight. Therefore, any estimates constructed from the dataset should take design weights into account. To estimate log-densities, I used the local likelihood method of \cite{Loader2006} which can easily accommodate design weights. The procedure is described in detail in the Appendix. Figure \ref{fig3} shows the time series of density estimates and their $\clr$-images.  
	
	To investigate the dimension of the attractor space, I calculated the test statistics described in the previous section. Figure \ref{tab2} displays the test statistics for $R=1,\ldots,7$ as well as a scree plot of the eigenvalues. From the scree plot it is apparent that $R_{\max}$ can be set to around $5$. Referring back to Table \ref{tab1}, we see that $R=2$ is rejected even at the $1\%$ level, but $R=1$ is not rejected at the $5\%$ level. We tentatively conclude that the dimension of the attractor space is one. Viewed as a subspace of $\overline{L}^2(\lambda)$, the estimated attractor space is the span of the leading eigenfunction $\hat{v}_1$, displayed in Figure \ref{tab2}. Alternatively, we may regard the attractor space to be the span of $\clr^{-1}(\hat{v}_1)$, a subspace of $B^2(\lambda)$.
	
	To describe the attractor space, I orthogonally projected the estimated densities upon the estimated cointegrating space, obtaining the projected series $(\hat{f}^C_t, t=1,\ldots,T)$. I positively and negatively perturbed the sample mean $\bar{f}^C_T$ of $(\hat{f}^C_t, t=1,\ldots,T)$, called the stationary mean of $f$, in the direction associated with the leading eigenfunction $\clr^{-1} \hat{v}_1$. More specifically, the positive perturbation is $\bar{f}^C_T\oplus(\hat{\zeta}_1 \odot\clr^{-1}(\hat{v}_1))$ and the negative perturbation is $\bar{f}^C_T\ominus(\hat{\zeta}_1 \odot \clr^{-1}(\hat{v}_1))$, where $\hat{\zeta}_1$ is the eigenvalue associated with $\hat{v}_1$. Figure \ref{figA1} shows the results. 
	\begin{figure}[h!]
		\centering
		\caption{Test statistics --- individual earnings}
		\label{tab2}
		\begin{tabular}{cccccccc}
			\hline
			$R$                        & 1      & 2      & 3      & 4      & 5      & 6      & 7      \\	\hline
			$	\tau_R^T$ & 0.03638 & 0.01253& 0.00908 & 0.00590 & 0.00537  & 0.00506 &  0.00472
		\end{tabular}
		\begin{tabular}{cc}\\
			Scree plot of eigenvalues & Leading eigenfunction\\
			\includegraphics[width=0.47\textwidth]{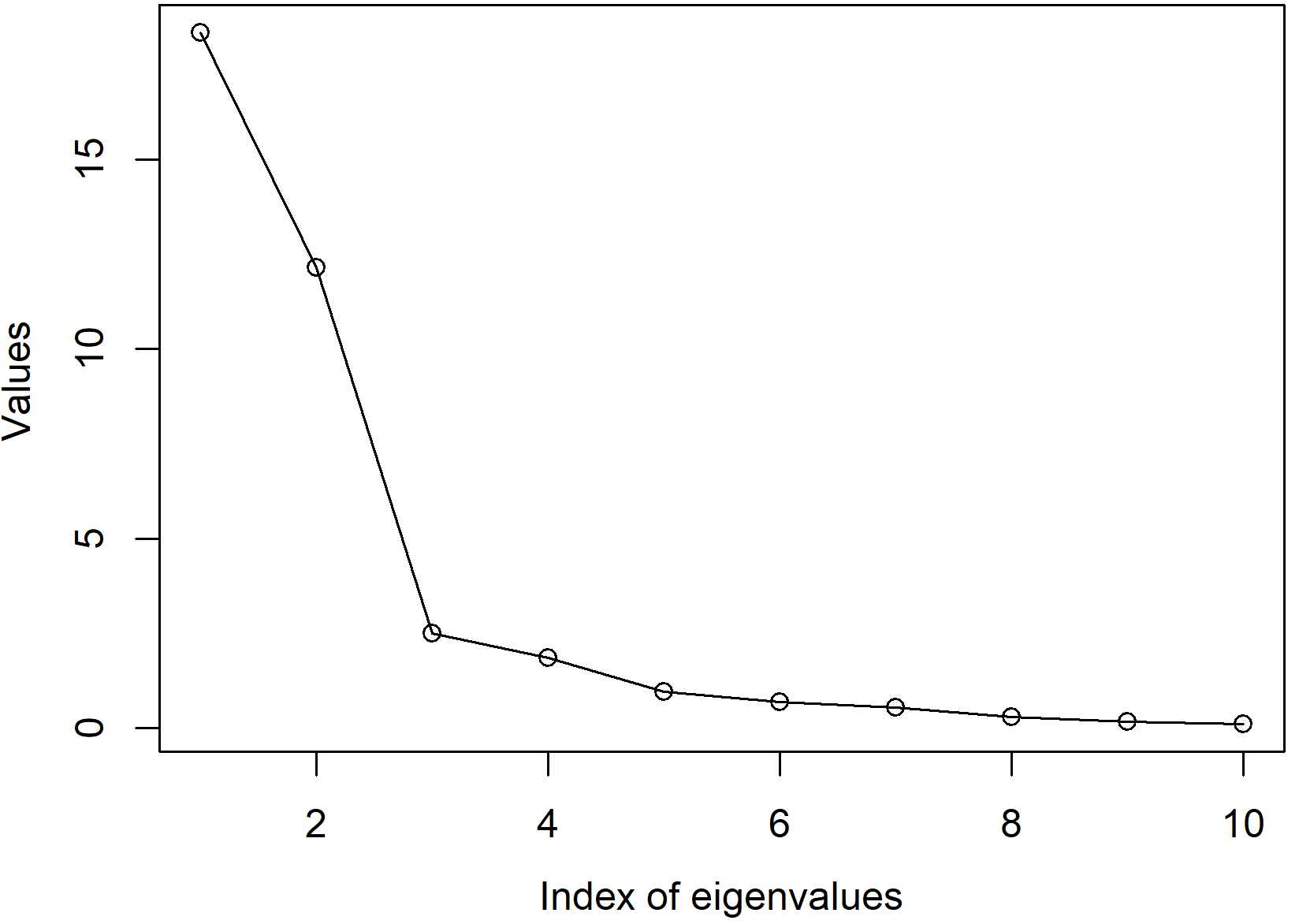} & 	\includegraphics[width=0.47\textwidth]{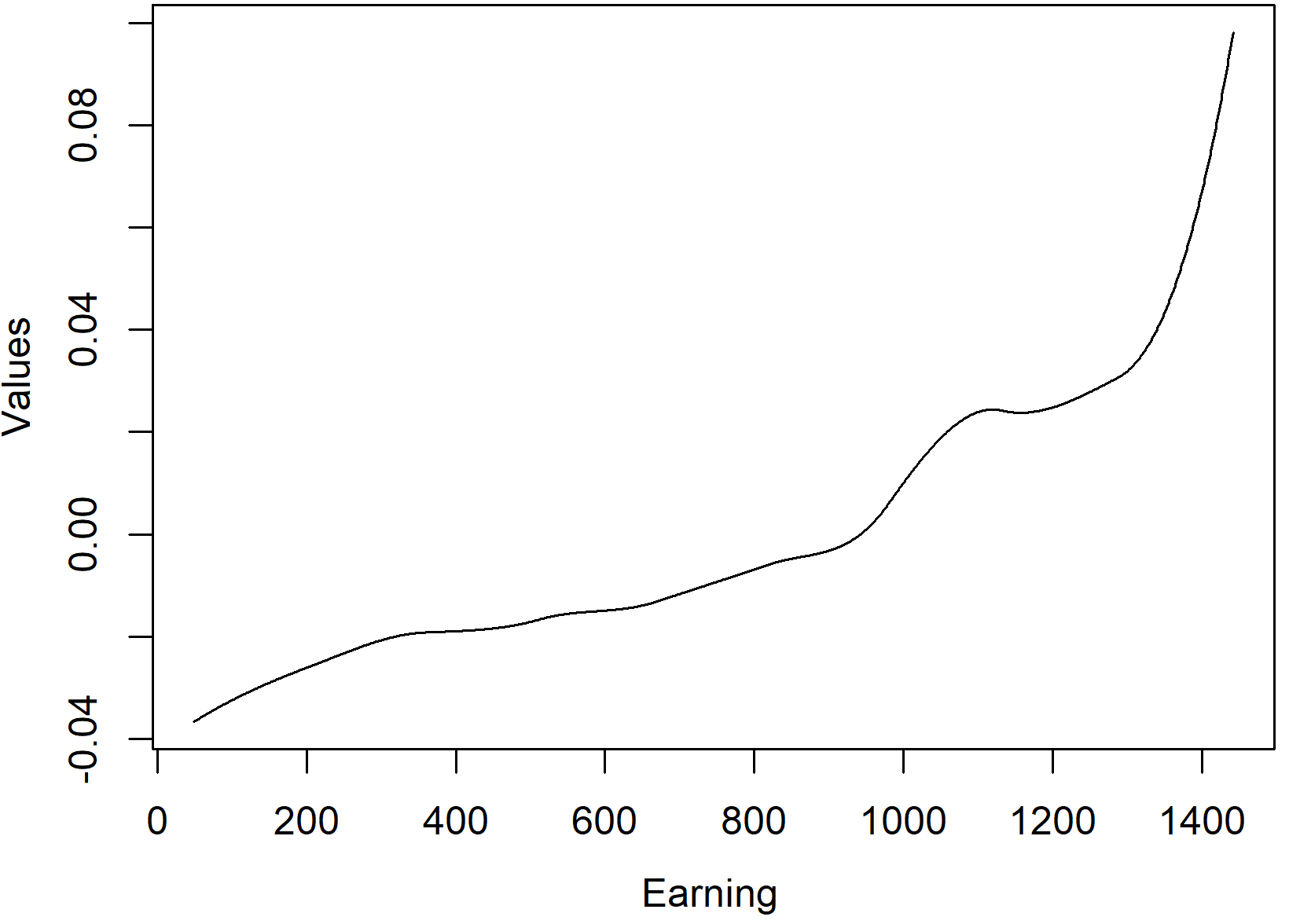}
		\end{tabular}		
	\end{figure}

	\begin{figure}[t!] 
		\caption{Density estimates and $\clr$-images - earnings} \label{fig3}
		\begin{tabular}{cc}\\
			Time series of density estimates & Time series of $\clr$-images\\
			\includegraphics[width=0.47\textwidth]{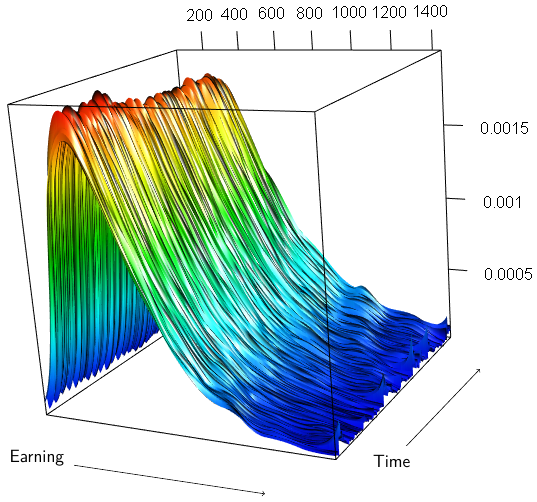} & 	\includegraphics[width=0.45\textwidth]{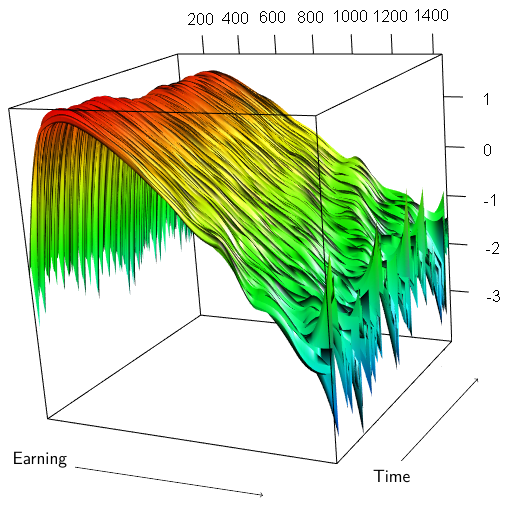}	
		\end{tabular}
	\end{figure}
	\begin{figure}[t!] 
		\centering
		\caption{Description of the attractor  - earnings} \label{figA1}
		\includegraphics[width=0.65\textwidth]{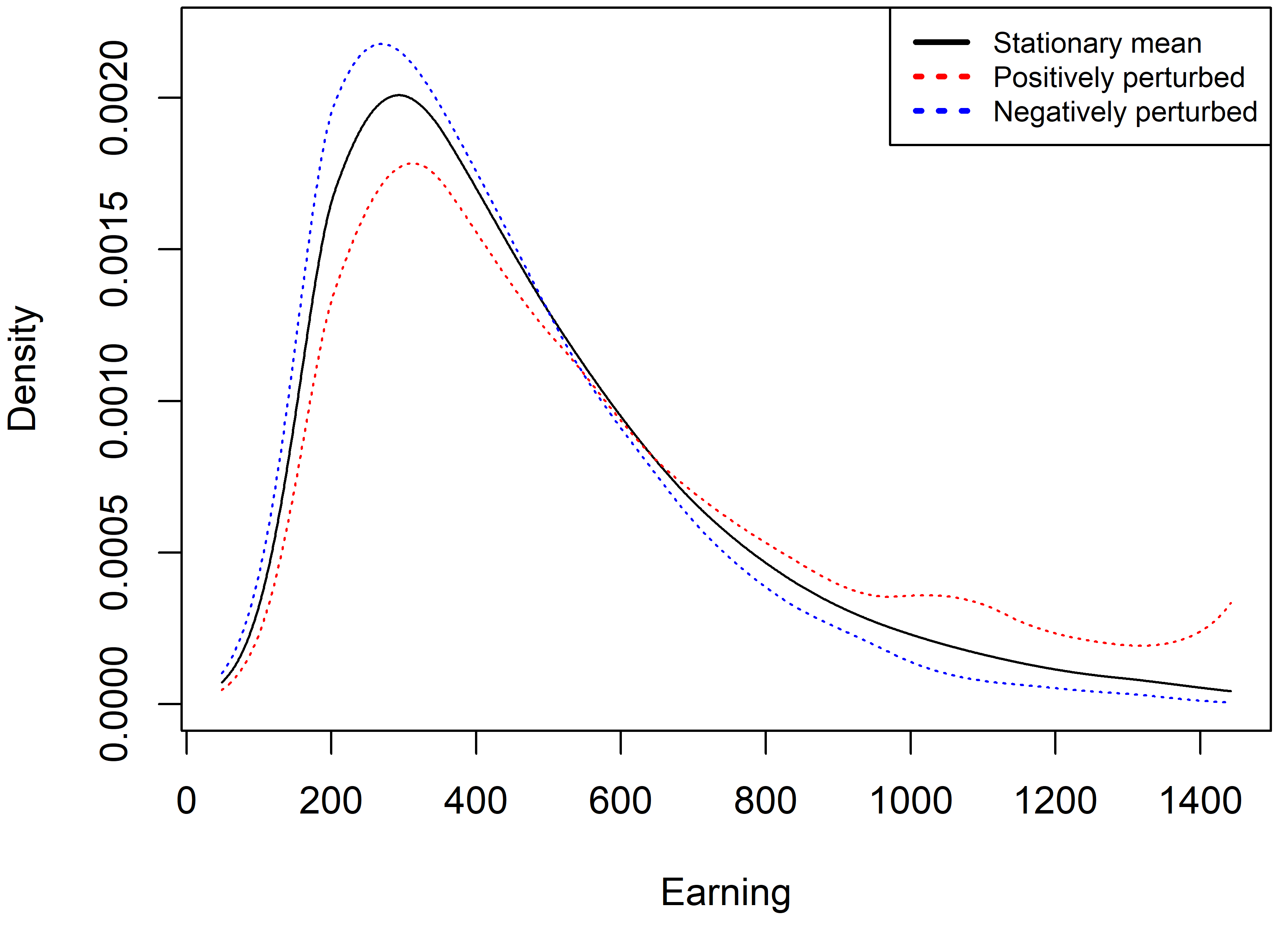} 	
	\end{figure}
	
	\subsection{Example 2 : cross-sectional densities of wages} 
	Now we consider cross-sectional densities of hourly wages. The cross-sectional observations are obtained at monthly frequency from the CPS database, and the span is the same as in the previous example. Wages are deflated using CPI data and measured in January 1990 dollars. There are many near-zero values and top-coded values in the raw data set, so I excluded earnings below the 1.25th percentile and above the 98.75th percentile; unlike the earnings data set, the top-coded values are constant over time, so only a small truncation is required to get rid of abnormal or extreme values. Similarly, this exclusion of top and bottom 1.25\% of observations would help enhance the accuracy of the log-density estimation at the boundaries.  The number of observations for each month ranges from 7348 to 9584  after truncation. Figure \ref{fig4} shows the time series of density estimates and their $\clr$-images.

	Test statistics are calculated as before and reported in Figure \ref{tab3} with other graphs. From the scree plot, we may set $R_{\max}$ to around $5$. From successive tests we conclude that the dimension of the attractor space is $1$.\footnote{On the other hand, if we set $R_{\max}$ equal to 6 (resp.\ 7) then our sequential testing procedure indicates an attractor space dimension of 6 (resp.\ 7), which seems large.} Thus, the estimated attractor space, viewed as a subspace of $\overline{L^2}(\lambda)$, is the span of the eigenfunction associated with the first leading eigenvalue. Viewed as a subspace of $B^2(\lambda)$, it is the span of the $\clr^{-1}$-image of this eigenfunction. 
	
	\begin{figure}[h!]
		\centering
		\caption{Test statistics - wages}
		\label{tab3}
		\begin{tabular}{cccccccc}
			\hline
			R                        & 1      & 2      & 3      & 4      & 5      & 6    &7      \\	\hline
			$	\tau_R^T$ & 0.05174 &0.01181  &0.01081  & 0.01076   &0.00982  &  0.00977 & 0.00966
		\end{tabular}
		\begin{tabular}{cc}\\
			Scree plot of eigenvalues &  Leading eigenfunction\\
			\includegraphics[width=0.47\textwidth]{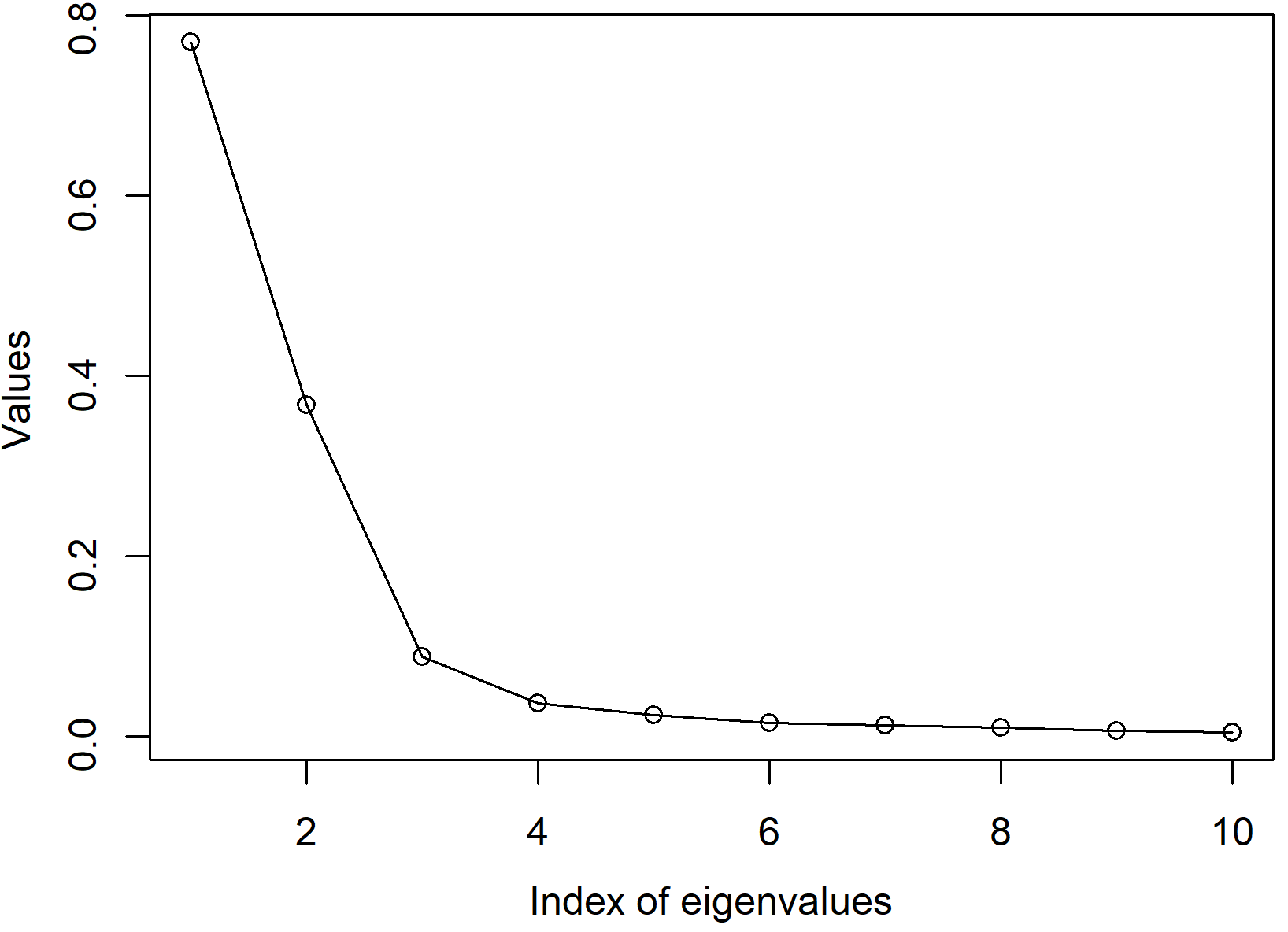} & 	\includegraphics[width=0.47\textwidth]{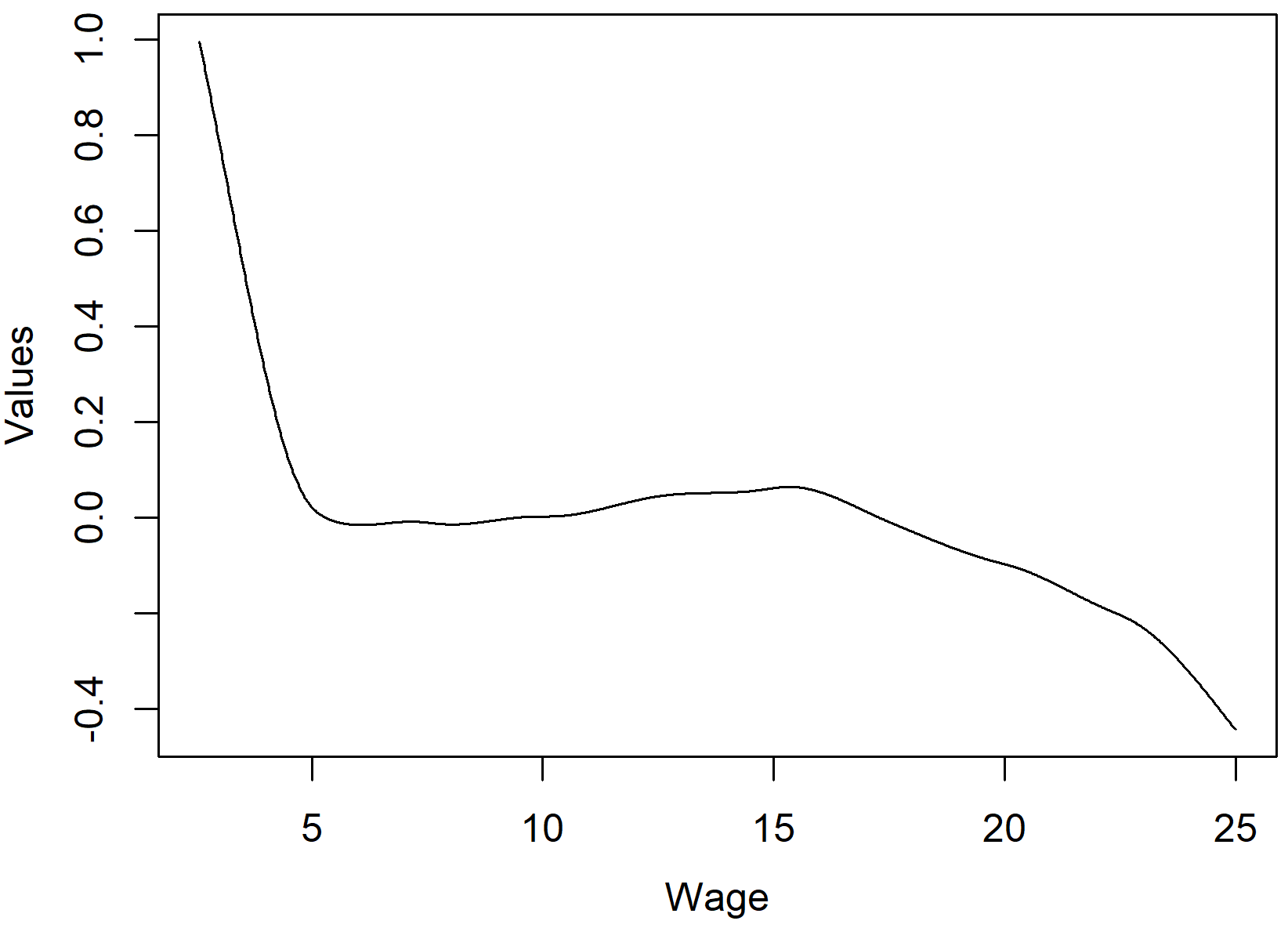}
		\end{tabular}
	\end{figure}

	Figure \ref{figA2} provides a visual impression of the attractor space, similar to Figure \ref{figA1} above. In this case I perturbed the stationary mean $\bar{f}^C_T$ in the direction of $\clr^{-1} \hat{v}_1$ by $\pm 2 \hat{\zeta}_1$, so as to more clearly emphasize the difference between the two perturbations.

	\begin{figure}[h!] 
		\caption{Density estimates and $\clr$-images - wages} \label{fig4}
		\begin{tabular}{cc}\\
			Time series of density estimates & Time series of $\clr$-images\\
			\includegraphics[width=0.47\textwidth]{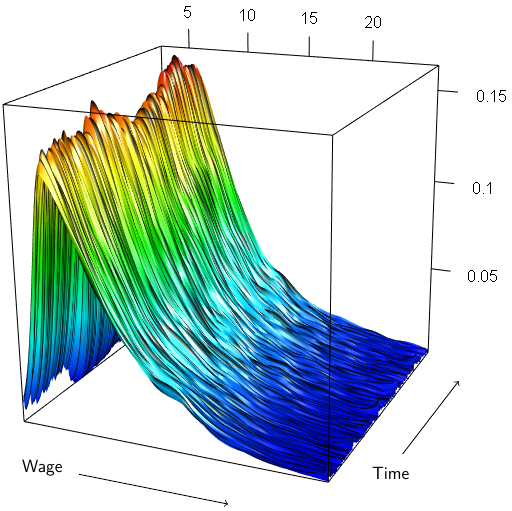} & 	\includegraphics[width=0.46\textwidth]{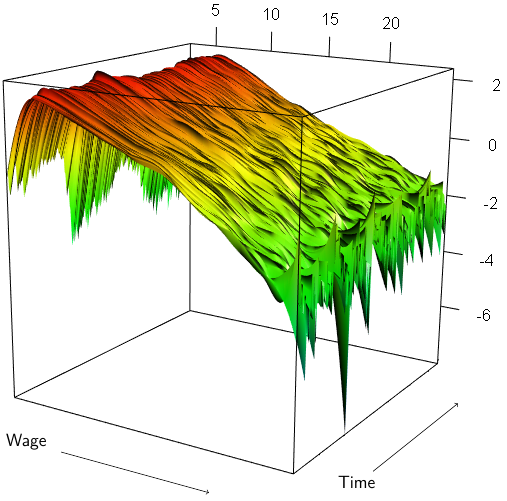}	
		\end{tabular}
	\end{figure}
	\begin{figure}[h!] 
		\centering
		\caption{Description of the attractor  - wages} \label{figA2}
		\includegraphics[width=0.65\textwidth]{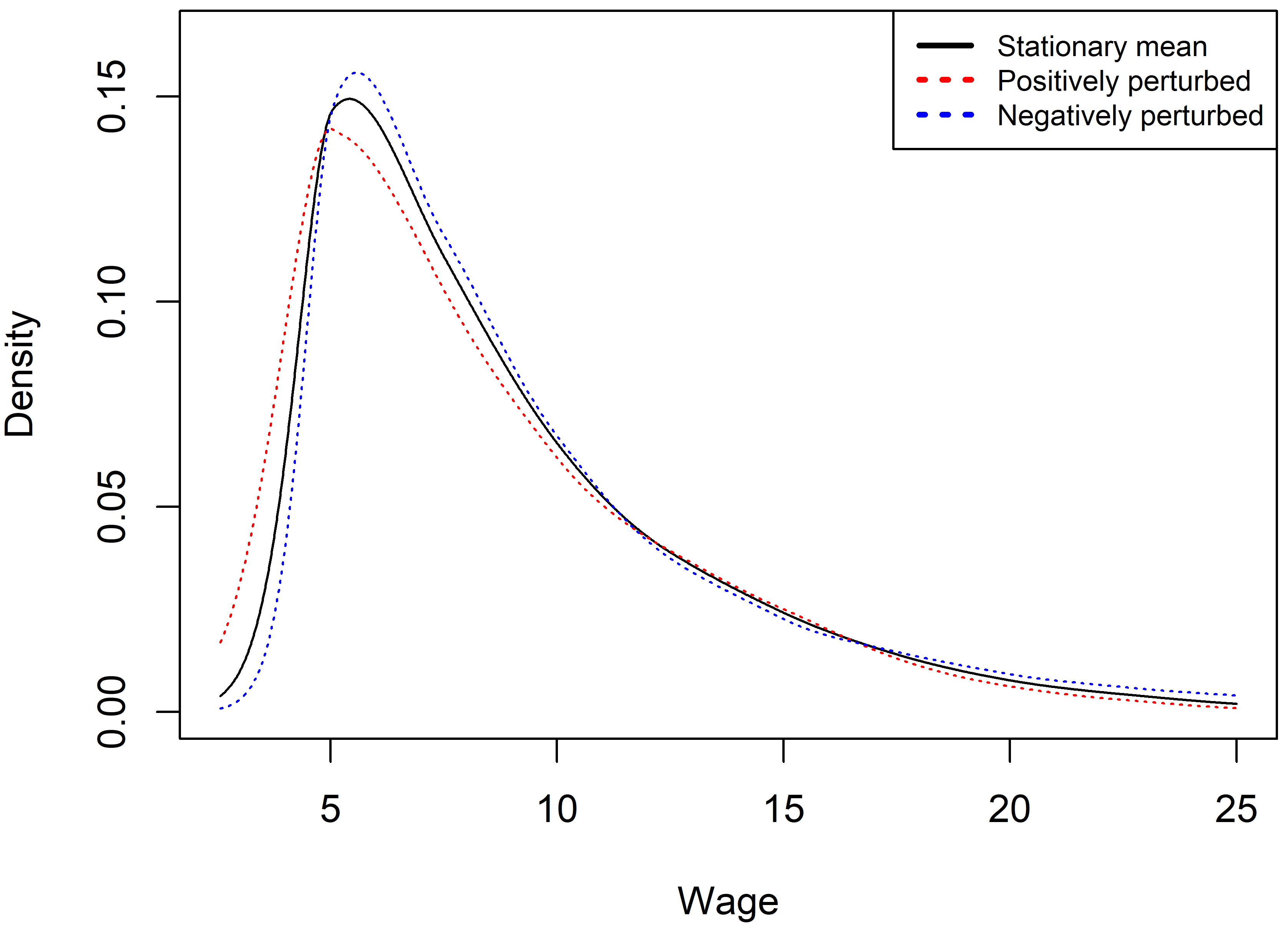} 	
	\end{figure}

	\section{Concluding remarks}
	In this paper we have investigated cointegrated linear processes with values in a Bayes Hilbert space of densities. Autoregressive density-valued processes were also studied and a version of the Granger-Johansen representation theorem is provided. We showed that the statistical methods developed by \cite{Chang2016152} can be used to estimate the attractor space associated with a cointegrated linear process in a Bayes Hilbert space. 
	\newpage
	\bibliographystyle{apalike}
	\bibliography{swkrefs}

	\newpage
	\section{Appendix}
	\subsection{Useful lemmas}
	\begin{lemma}[Theorem 1.4 in \cite{HOWLAND197112}]\label{lem1} Let $\mathfrak  B(\mathcal H)$ be the space of bounded linear operators $\mathcal H \to \mathcal H$ for a separable complex Hilbert space $\mathcal H$ and let $A(z) = \id_{\mathcal H} - C(z)$ where $C(z)$ is an analytic family of compact operators. If $z_0 \in \sigma(A)$ is an isolated element, there exist finite dimensional projections $Q_1,\ldots,Q_d$ such that 
		\begin{align*}
		A(z) = (P_1 + (z_0-z)Q_1) \cdots(P_d + (z_0-z)Q_d)G(z),
		\end{align*}
		where $G(z)$ is analytic, $G(z_0)$ is invertible, and $P_i = \id_H - Q_i$. We may choose $P_1$ to be any projection on $\ran A(z_0)$.
	\end{lemma}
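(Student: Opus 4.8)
The plan is to prove the factorization by induction, peeling off one elementary factor $P_i+(z_0-z)Q_i$ from the left at a time and arranging that each factor lowers by exactly one the order of the pole of the inverse at $z_0$. Because $A(z)=\id_{\mathcal H}-C(z)$ with $C$ an analytic family of compact operators, $A(z)$ is an analytic family of index-zero Fredholm operators; since $z_0$ is an isolated point of $\sigma(A)$, the analytic Fredholm theorem ensures that $A(z)^{-1}$ is meromorphic near $z_0$ with a pole of some finite order $d$. I would take this $d$ to be the number of factors and show the induction terminates after exactly $d$ steps.

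For the single reduction step, I would first note that $n_0:=\dim\ker A(z_0)$ is finite and that $\ran A(z_0)$ is closed of codimension $n_0$, both by the index-zero Fredholm property. Choose $P_1$ to be any (idempotent) projection with $\ran P_1=\ran A(z_0)$ and set $Q_1:=\id_{\mathcal H}-P_1$, a finite-rank projection of rank $n_0$; this is exactly where the stated freedom enters, since only $\ran P_1$ is constrained and the complementary subspace $\ran Q_1$ may be chosen arbitrarily. Put $E_1(z):=P_1+(z_0-z)Q_1$, whose inverse for $z\neq z_0$ is $P_1+(z_0-z)^{-1}Q_1$, and define $A_1(z):=E_1(z)^{-1}A(z)$. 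The crucial cancellation is $Q_1A(z_0)=0$, which holds because $\ran A(z_0)=\ran P_1=\ker Q_1$; hence $Q_1A(z)$ vanishes to first order at $z_0$, the apparent simple pole in $(z_0-z)^{-1}Q_1A(z)$ is removable, and $A_1$ extends analytically across $z_0$. Writing $E_1(z)^{-1}=\id_{\mathcal H}+\bigl((z_0-z)^{-1}-1\bigr)Q_1$ exhibits $A_1=\id_{\mathcal H}-C_1(z)$ with $C_1:=\id_{\mathcal H}-A_1$ analytic and compact-valued (built from $C$ and the finite-rank $Q_1$), so the hypotheses of the lemma are reproduced for $A_1$ and the construction iterates.

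The crux, and the step I expect to be the main obstacle, is the quantitative claim that the pole of $A_1(z)^{-1}$ at $z_0$ has order exactly $d-1$. One inequality is immediate: from $A(z)^{-1}=A_1(z)^{-1}E_1(z)^{-1}$ and the fact that $E_1^{-1}$ has a simple pole, the order of the pole of $A_1^{-1}$ is at least $d-1$. The reverse inequality is delicate, as it requires showing that left multiplication by $E_1(z)^{-1}$ genuinely absorbs one unit of the maximal partial multiplicity at $z_0$ rather than merely rearranging the singular part. I would establish this through the Gohberg-Sigal theory of root functions, verifying that the largest partial multiplicity of the Fredholm family strictly decreases under the factorization; the finite-dimensional models $A(z)=\operatorname{diag}\bigl((z_0-z)^{\kappa_1},\ldots\bigr)$ and the single Jordan chain make the mechanism transparent and indicate the bookkeeping required in the operator setting.

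Granting the reduction, the induction terminates after $d$ steps: the remaining factor $G:=A_d$ then has $G(z)^{-1}$ analytic at $z_0$, so $G(z_0)$ is invertible, and unwinding the recursion gives
\[ A(z)=(P_1+(z_0-z)Q_1)\cdots(P_d+(z_0-z)Q_d)\,G(z), \]
with each $Q_i$ finite-dimensional, $G$ analytic, and $G(z_0)$ invertible, as claimed.
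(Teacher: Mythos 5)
The paper does not actually prove this lemma: it is imported verbatim as Theorem 1.4 of \cite{HOWLAND197112}, and only Lemmas \ref{lem2} and \ref{lem3} are proved on top of it. So there is no in-paper argument to compare against; judged on its own terms, your reconstruction is essentially Howland's proof --- an induction that peels off one elementary factor $P_1+(z_0-z)Q_1$ per unit of the pole order $d$ of $A(z)^{-1}$ at $z_0$. Your reduction step is sound: $Q_1A(z_0)=0$ because $\ran A(z_0)=\ran P_1=\ker Q_1$, so the singularity of $(z_0-z)^{-1}Q_1A(z)$ is removable, $A_1\coloneqq E_1(z)^{-1}A(z)$ extends analytically, and it is again $\id_{\mathcal H}$ minus an analytic family of compact operators (the correction term is finite-rank-valued since its range lies in $\ran Q_1$). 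The stated freedom in $P_1$ enters exactly where you place it.

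The one step you defer to Gohberg--Sigal root-function theory --- that the pole order drops by \emph{exactly} one --- does not need that machinery; it has an elementary Laurent-coefficient proof. Write $A(z)^{-1}=\sum_{k\ge -d}B_k(z-z_0)^k$ with $B_{-d}\neq 0$. Comparing coefficients of $(z-z_0)^{-d}$ in $A(z)^{-1}A(z)=\id_{\mathcal H}$ gives $B_{-d}A(z_0)=0$, i.e.\ $\ran A(z_0)\subseteq\ker B_{-d}$, hence $B_{-d}P_1=0$ for \emph{any} projection $P_1$ onto $\ran A(z_0)$. Since $A_1(z)^{-1}=A(z)^{-1}E_1(z)$ with $E_1(z)=P_1-(z-z_0)Q_1$, the coefficient of $(z-z_0)^{-d}$ in $A_1(z)^{-1}$ is precisely $B_{-d}P_1=0$, so the pole order of $A_1^{-1}$ is at most $d-1$; your ``immediate'' inequality gives at least $d-1$, and the two together give exactness. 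With that closed, the induction terminates after $d$ steps with $A_d^{-1}$ analytic at $z_0$, whence $G\coloneqq A_d$ satisfies the requirements. Two small points to make explicit at the outset: the analytic Fredholm theorem is what guarantees $d<\infty$, and $d\ge 1$ because an analytic inverse at $z_0$ would make $A(z_0)$ invertible, contradicting $z_0\in\sigma(A)$.
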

	
	\begin{lemma}\label{lem2} Under the conditions of Lemma \ref{lem1}, 
		\begin{align*}
		(\id_H-P_1) A^{(1)}(z_0){\mid_{\ker A(z_0)}} : \ker A(z_0) \to \ran (\id_H-P_1) 
		\end{align*}
		is invertible if and only if $A(z)^{-1}$ has a simple pole at $z=z_0$.
	\end{lemma}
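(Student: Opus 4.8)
The plan is to reduce the statement to a question about Jordan chains (root functions) of the holomorphic operator family $A(z)$ at $z_0$, and to exploit that $A(z_0)$ is a Fredholm operator of index zero. Since $A(z) = \id_{\mathcal H} - C(z)$ with $C(z)$ compact, $A(z_0)$ is Fredholm of index zero; in particular $\ran A(z_0)$ is closed and $\ker A(z_0)$ has the same finite dimension as any algebraic complement of $\ran A(z_0)$. Because $P_1$ is a projection onto $\ran A(z_0)$, the operator $\id_{\mathcal H} - P_1$ annihilates exactly $\ran A(z_0)$ and maps onto a complement of it, so $\ran(\id_{\mathcal H} - P_1)$ has the same dimension as $\ker A(z_0)$. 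Hence the map in the statement is a linear map between finite-dimensional spaces of equal dimension, and I would first record that it is invertible if and only if it is injective.

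Next I would characterize injectivity. A nonzero $\phi_0 \in \ker A(z_0)$ lies in the kernel of $(\id_{\mathcal H} - P_1)A^{(1)}(z_0){\mid_{\ker A(z_0)}}$ precisely when $A^{(1)}(z_0)\phi_0 \in \ran A(z_0)$, since $(\id_{\mathcal H}-P_1)\psi = 0$ is equivalent to $\psi \in \ran A(z_0)$. But $A^{(1)}(z_0)\phi_0 \in \ran A(z_0)$ is exactly the solvability of $A(z_0)\phi_1 + A^{(1)}(z_0)\phi_0 = 0$, which is the condition that $\phi_0$ extends to a Jordan chain $(\phi_0,\phi_1)$ of length two for $A$ at $z_0$. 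Thus the displayed map fails to be injective if and only if $A$ admits a Jordan chain of length at least two at $z_0$, and it remains only to connect this with the order of the pole of $A(z)^{-1}$.

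For the final link I would use the standard fact from the theory of holomorphic Fredholm operator functions that, at an isolated point of the spectrum, the order of the pole of $A(z)^{-1}$ equals the maximal length of a Jordan chain (the largest partial multiplicity) at $z_0$; see \cite{Markus2012}. Since $z_0 \in \sigma(A)$, the pole has order at least one, so $A(z)^{-1}$ has a \emph{simple} pole exactly when every Jordan chain has length one, i.e.\ when no chain of length two exists, which by the previous paragraph is equivalent to injectivity and hence to invertibility of the displayed map. I expect the main obstacle to be this last identification of pole order with maximal chain length; rather than invoke the general theory, one can instead derive it directly from the Howland factorization of Lemma \ref{lem1}, writing $A(z)^{-1} = G(z)^{-1}(P_d + (z_0-z)^{-1}Q_d)\cdots(P_1 + (z_0-z)^{-1}Q_1)$ and tracking the most singular term, though keeping careful account of the ranges of the $Q_i$ and of $G(z_0)^{-1}$ makes this the most delicate part of the argument.
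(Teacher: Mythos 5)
Your proposal is correct and follows essentially the same route as the paper's proof: both use the index-zero Fredholm property to get $\dim\ker A(z_0)=\dim\ran(\id_H-P_1)$ and thereby reduce invertibility to injectivity, and both then identify injectivity of $(\id_H-P_1)A^{(1)}(z_0){\mid_{\ker A(z_0)}}$ with the absence of a nonzero $\phi_0\in\ker A(z_0)$ satisfying $A^{(1)}(z_0)\phi_0\in\ran A(z_0)$. The only difference is the external fact invoked at the last step --- the paper cites Corollary 3.5 of \cite{HOWLAND197112} (simple pole iff injectivity on the kernel plus trivial intersection of $A^{(1)}(z_0)\ker A(z_0)$ with $\ran A(z_0)$), while you use the equivalent ``pole order equals maximal Jordan chain length'' characterization; these are the same statement in different language.
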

	\begin{proof}
		From Corollary 3.5 in \cite{HOWLAND197112}
		$A(z)^{-1}$ has a simple pole at $z=z_0$ if and only if (i) 	$A^{(1)}(z_0)$ is injective on $\ker A(z_0)$ and (ii) $\ran A(z_0) \cap A^{(1)}(z_0)\ker A(z_0) = \{0\}$.
		Since $A(z) = \id_{\mathcal H} - C(z)$ and $C(z)$ is a compact family of operators, $A(z)$ is an index-zero Fredholm family of operators. This implies that $\dim(\ker A(z_0)) = \dim(\ran (\id_H-P_1))$. From this, one can easily show that the invertibility of $(\id_H-P_1) A^{(1)}(z_0){\mid_{\ker A(z_0)}}$ is another equivalent condition to (i) and (ii).
	\end{proof}

	\begin{lemma}\label{lem3}
		Let $d=1$ in Lemma \ref{lem1}. Then,
		\begin{itemize}
			\item[\upshape{(a)}] $G(z_0) = A(z_0) - (\id_H-P_1) A^{(1)}(z_0)$,
			\item[\upshape{(b)}] $\ran G(z_0) \mid_{\ker A(z_0)} = \ran (\id_H - P_1)$,
		 %   \item[\upshape{(c)}] $R$ is a finite rank operator with infinite dimensional kernel.
		\end{itemize}
	\end{lemma}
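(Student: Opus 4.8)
The plan is to work directly from the factorization supplied by Lemma \ref{lem1} in the case $d=1$, namely
\begin{align*}
A(z) = (P_1 + (z_0-z)Q_1)\,G(z),
\end{align*}
where $P_1 = \id_H - Q_1$, the map $G$ is analytic, and $G(z_0)$ is invertible. Throughout I would exploit the projection identities $Q_1^2 = Q_1$, $Q_1 P_1 = P_1 Q_1 = 0$, and $P_1 + Q_1 = \id_H$.

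For part (a), I would first evaluate the factorization at $z = z_0$, where the term $(z_0 - z)Q_1$ vanishes, giving $A(z_0) = P_1 G(z_0)$. Next I would differentiate the factorization by the product rule for operator-valued analytic functions; since the derivative of $P_1 + (z_0 - z)Q_1$ is $-Q_1$, this yields
\begin{align*}
A^{(1)}(z) = -Q_1 G(z) + (P_1 + (z_0-z)Q_1)\,G^{(1)}(z),
\end{align*}
and hence $A^{(1)}(z_0) = -Q_1 G(z_0) + P_1 G^{(1)}(z_0)$. Applying $\id_H - P_1 = Q_1$ on the left and using $Q_1 P_1 = 0$ annihilates the second term, so that $(\id_H - P_1)A^{(1)}(z_0) = -Q_1 G(z_0)$. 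Therefore $A(z_0) - (\id_H - P_1)A^{(1)}(z_0) = P_1 G(z_0) + Q_1 G(z_0) = (P_1 + Q_1)G(z_0) = G(z_0)$, which is the asserted identity.

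For part (b), the strategy is to characterize $\ker A(z_0)$ through the relation $A(z_0) = P_1 G(z_0)$ derived above. Because $G(z_0)$ is invertible, we have $f \in \ker A(z_0)$ if and only if $G(z_0)f \in \ker P_1$, and $\ker P_1 = \ran(\id_H - P_1) = \ran Q_1$. It follows that the restriction $G(z_0)\mid_{\ker A(z_0)}$ carries $\ker A(z_0)$ bijectively onto $\{g \in \mathcal H : P_1 g = 0\} = \ker P_1 = \ran(\id_H - P_1)$, which is precisely the claimed range. I do not expect a genuine obstacle here; the argument is a short direct computation once the factorization is in place. The only points requiring care are the correct application of the product rule to the operator pencil and consistent bookkeeping of the projection identities—particularly $Q_1 P_1 = 0$—together with the observation that the invertibility of $G(z_0)$ is what makes the range computation in (b) clean.
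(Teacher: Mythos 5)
Your proof is correct. Part (a) is essentially the paper's argument: the paper expands both sides of the factorization $A(z)=[P_1+(z_0-z)(\id_H-P_1)]G(z)$ as Taylor series around $z_0$ and matches coefficients of $(z_0-z)^0$ and $(z_0-z)^1$, which is exactly your "evaluate, then differentiate" computation, and both arguments hinge on the same projection identities ($Q_1^2=Q_1$, $Q_1P_1=0$). Part (b) is where you genuinely diverge: the paper deduces (b) from (a) together with Lemma \ref{lem2}, i.e.\ it uses that $d=1$ forces $A(z)^{-1}$ to have a simple pole, so $(\id_H-P_1)A^{(1)}(z_0)\mid_{\ker A(z_0)}=-G(z_0)\mid_{\ker A(z_0)}$ is invertible onto $\ran(\id_H-P_1)$, whence the range claim. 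You instead argue directly from $A(z_0)=P_1G(z_0)$ that $\ker A(z_0)=G(z_0)^{-1}(\ker P_1)$ and that the invertibility of $G(z_0)$ carries this kernel bijectively onto $\ker P_1=\ran(\id_H-P_1)$. Your route is more elementary and self-contained (it needs only the factorization and the invertibility of $G(z_0)$, not the Howland simple-pole characterization), and as a bonus it shows that $G(z_0)\mid_{\ker A(z_0)}$ is injective onto $\ran(\id_H-P_1)$, not merely surjective; the paper's route buys a shorter write-up by leaning on machinery it has already established for Lemma \ref{lem2}. Both are valid.
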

	\begin{proof}
	%	From the equation $A(z)^{-1}A(z) = A(z)A(z)^{-1} = \id_H$, it follows that the coefficient operator of $(z-z_0)^{-1}$ must be the zero operator, so we have
	%	\begin{align*}
%		&A(z_0)R  = 0\\
%		&RA(z_0)  = 0.
%		\end{align*}
%	    Since $\dim (\ker A(z_0)) < \infty$ and $\dim (\ran A(z_0)) =  \infty$, (c) is easily deduced.
		 
		We may easily deduce (b) from (a) and Lemma \ref{lem2}, so we only show (a). We know that $G(z)$ is holomorphic at $z=z_0$ from Lemma \ref{lem1}, so
		\begin{align*}
		G(z)= G(z_0) - G^{(1)}(z_0) (z_0-z) + \sum_{j=2}^\infty \frac{G^{(j)}(z_0)}{j!}(z-z_0)^j.
		\end{align*}
		Therefore,
		\begin{align} \label{lemeq1}
		A(z) = [P_1 + (\id_H-P_1 )(z_0-z)][G(z_0)- G^{(1)}(z_0) (z_0-z) + H_1(z) ],
		\end{align}	
		where $H_1(z)$ denotes the remainder of the Taylor series of $G(z)$.  
		Since $A(z)$ is holomorphic at $z=z_0$, we have
		\begin{align}\label{lemeq2}
		A(z) = A(z_0) - A^{(1)}(z_0) (z_0-z) + H_2(z),
		\end{align}
		where $H_2(z)$ is the remainder of the Taylor series of $A(z)$.
		Collecting terms associated with the same powers from \eqref{lemeq1} and \eqref{lemeq2} we obtain
		\begin{align}
		&P_1 G(z_0) = A(z_0) \label{lemeq22}\\ 
		&(\id_H -P_1) G(z_0) = - A^{(1)}(z_0) + P_1 G^{(1)}(z_0).
		\end{align}
		The left-hand side of the second equation is invariant under $(\id_H - P_1)$, implying that 
		\begin{align}\label{lemeq3}
		(\id_H -P_1) G(z_0) = - (\id_H -P_1)  A^{(1)}(z_0).
		\end{align}
		Combining \eqref{lemeq22} and \eqref{lemeq3}, we obtain (a).
	\end{proof}

	\subsection{Log-density estimation}
	In Section \ref{empirical}, the estimated log-density $\hat{g}_t$ is obtained from the following procedure. The reader is referred to \cite{loader1996,Loader2006} for more details. 
	
	Given survey responses $X_1, \ldots, X_n$ with design weights $w_1, \ldots, w_n$ such that $\sum_{i=1} w_i = n$, consider the weighted log-likelihood
	\begin{align*}
	\mathcal L(f_t) = \sum_{i=1}^n w_i \log (f_t(X_i)) - n \left(\int f_t(u) du - 1 \right).
	\end{align*} 
	Let $K$ be the support of $f_t$. Under some local smoothness assumptions, we can consider a localized version of the log-likelihood and $\log f_t(u)$ can be locally approximated by a polynomial function, so follows.
	\begin{multline} \label{locallf}
	\mathcal L_p(f_t)(x) = \sum_{i=1}^n w_i  \mathcal W\left(\frac{X_i-x}{h}\right) \mathcal Q(X_i-x ; \alpha_t)\\ - n \int \mathcal W\left(\frac{u-x}{h}\right) \exp(\mathcal Q(u-x ; \alpha_t)) du
	\end{multline} 
	where  $\mathcal W$ is a suitable kernel function,  $h$ is a bandwidth which assumed to be fixed, and $\mathcal Q(u ; \alpha_t)$ is polynomial in $u$ with coefficients $\alpha_t$. 
	
	I set $\mathcal W(u) = \frac{70}{81}(1-|u|^3)^3$ and $\mathcal Q(u ; \alpha) = \alpha_{0,t}  + \alpha_{1,t} u$. For fixed $x \in K$, let $(\hat{\alpha}_{0,t},\hat{\alpha}_{1,t})$ be the maximizer of \eqref{locallf}. Then the local likelihood log-density estimate is given by
	\begin{align*}
	\hat{g}_t (x) = \hat{\alpha}_{0,t}
	\end{align*}
	The procedure is repeated for a fine grid of points, and then $\hat{g}_t$ may be obtained from an interpolation method  described in  \citep[Chapter 12]{Loader2006}. 
	Needless to say, the above estimation procedure depends on $h$, a fixed bandwidth parameter. Let $b_t \coloneqq \left[q_t(99)-q_t(1)\right]n_t^{-1/5}$ where $q_t(a)$ is the $a$-th percentile of observations at time $t$ and $n_t$ is the number of observations. Within the range $[0.75 \, b_t,  1.25\, b_t ]$, bandwidth $h$ is set to the minimizer of a generalized BIC criterion.\footnote{A generalized version of AIC as a diagnostic for the local likelihood method is given in \cite{Loader2006}. A suitable generalization of BIC is obtained by changing the penalty term in an obvious way.}

	%	\begin{thebibliography}{99}                                                            	
	%		\bibitem[van den Boogaart et al.(2014)]{BOO14}
	%		\textsc{van den Boogaart, K.\ G., Egozcue, J.\ J., Pawlowsky-Glahn, V. } (2014).  Bayes Hilbert Spaces \textit{Australian \& New Zealand Journal of Statistics}, \textbf{2}, 171--194.
	
	%
	
	%	\end{thebibliography}
	
\end{document}